\theoremstyle{plain}                     
\newtheorem{teo}{Theorem}[section]       
\newtheorem{prop}[teo]{Proposition}     
\newtheorem{lem}[teo]{Lemma}      
\theoremstyle{definition}                
\newtheorem{oss}[teo]{Remark}
\numberwithin{equation}{section}
\newcommand{\scal}[2]{\langle #1,\,#2 \rangle}
\newcommand{\myintol}[1]{\int_0^L{#1} \,dx_1}
\newcommand{\myfintol}[1]{\fint_0^L{#1} \,dx_1}
\newcommand{\myintom}[1]{\int_{\Omega}{#1 \,dx_1dsdt}}
\newcommand{\st}[0]{\omega}
\newcommand{\stee}[0]{\omega_{\delta}}
\newcommand{\eh}[0]{\lin{\nabla}_{\frac{\delta_h}{h}}}
\newcommand{\ph}[0]{\Pi_{\frac{\delta_h}{h}}}
\newcommand{\nh}[0]{\nabla_{h,\delta_h}}
\newcommand{\tvh}[0]{\til{v}^h}
\newcommand{\myintss}[1]{\int_{0}^{L}{\int_{0}^{1}{#1\, dsdx_1}}}
\newcommand{\myintr}[1]{\int_{-\frac{1}{2}}^{\frac{1}{2}}{#1\,dt}}
\newcommand{\myintt}[1]{\int_{0}^{1}{#1\,ds}}
\newcommand{\deb}[0]{\rightharpoonup}
\newcommand{\ten}[0]{\rightarrow}
\newcommand{\til}[1]{\widetilde{#1}}
\newcommand{\lin}[1]{\overline{#1}}
\newcommand{\capp}[1]{\widehat{#1}}
\newcommand{\sym}{\mathrm{sym}}
\newcommand{\dist}{\mathrm{dist}}
\newcommand{\comp}{{\,\circ\,}}
\newcommand{\mthree}[0]{\mathbb{M}^{3\times 3}}
\newcommand{\n}[0]{\lin{n}}
\newcommand{\tg}[0]{\lin{\tau}}
\newcommand{\nn}[0]{\lin{\nabla}_{\epsilon}v\lin{R}_0^T}
\newcommand{\nnn}[0]{\lin{\nabla}_{{\epsilon}_j}v^j\lin{R}_0^T}
\newcommand{\nnp}[0]{\lin{\nabla}_{{\epsilon}_j}\phi^j\lin{R}_0^T}
\title[Thin-walled beams with an arbitrary cross-section]{Thin-walled beams with a cross-section of arbitrary geometry: derivation of linear theories starting from 3D nonlinear elasticity} 
\author[E. Davoli]{Elisa Davoli} 
\address[E. Davoli]{Scuola Internazionale Superiore di Studi Avanzati, Via Bonomea 265, 34136 Trieste (Italy)}
\email{davoli@sissa.it}
\subjclass[2010]{74K10, 74B20, 49J45}
\keywords{Thin-walled beams, nonlinear elasticity, $\Gamma$-convergence, dimension reduction}
\begin{document}

\begin{abstract}
The subject of this paper is the rigorous derivation of lower dimensional models for a nonlinearly elastic thin-walled beam whose cross-section is given by a thin tubular neighbourhood of a smooth curve. Denoting by $h$ and $\delta_h$, respectively, the diameter and the thickness of the cross-section, we analy\-se the case where the scaling factor of the elastic energy is of order $\epsilon_h^2$, with $\epsilon_h/\delta_h^2\ten \ell\in [0,+\infty)$. Different linearized models are deduced according to the relative order of magnitude of $\delta_h$ with respect to $h$. 
\end{abstract} 
\maketitle
\section{Introduction}
\label{introduction}
 \noindent A thin-walled beam is a three-dimensional body, whose length is much larger than the diameter of the cross-section, which, in turn, is much larger than the thickness of the cross-section. This kind of beams are commonly used in mechanical engineering, since they combine good resistance properties with a reasonably low weight. 
 
 In this paper we consider a nonlinearly elastic thin-walled beam with a cross-section of arbitrary geometry and we rigorously deduce, by $\Gamma$-convergence techniques, different lower dimensional linearized models, according to the relative order of magnitude between the cross-section diameter and the cross-section thickness.\\
 
The derivation of lower dimensional models for thin domains is a classical problem in elasticity theory. Since the early 90's a mathematically rigorous approach has emerged \cite{A-B-P, L-R, L-R2}, based on the notion of $\Gamma$- convergence. This variational approach guarantees convergence of minimizers (and of minima) of the three-dimensional energy to minimizers (and minima) of the limit models. Recently, owing to the seminal paper \cite{F-J-M}, hierarchies of limit models have been identified by $\Gamma$-convergence methods for plates \cite{F-J-M, F-J-M2}, shells \cite{F-J-M-M, L-M-P, L-M-P2}, and beams \cite{M-M3, M-M, S0, S}. The different limit models correspond to different scalings of the elastic energy, which, in turn, are determined by the scaling of the applied loads in terms of the thickness parameter.

The subject of this paper is the study of the lower dimensional theories for thin-walled beams that can be deduced by $\Gamma$-convergence from three-dimensional nonlinear elasticity. A similar analysis has been performed in the recent papers \cite{F-M-P, F-M-P2}, in the case of a rectangular cross-section. Here, instead, we assume that the cross-section of the beam is given by a thin tubular neighbourhood of a smooth curve. More precisely, let $\gamma:[0,1]\longrightarrow \mathbb{R}^3$, $\gamma(s)=\gamma_2(s)e_2+\gamma_3(s)e_3$, be a smooth and simple planar curve, whose curvature is not identically equal to zero, and let $n(s)$ denote the normal vector to the curve at the point $\gamma(s)$. We consider an elastic beam of reference configuration $$\Omega_h:=\Big\{x_1e_1+h\gamma(s)+\delta_h t n(s): x_1\in (0,L), s\in (0,1), t\in \Big(-\frac{1}{2},\frac{1}{2}\Big)\Big\},$$
where $L$ is the length of the beam and $h,\delta_h$ are positive parameters. To model a thin-walled beam, we assume 
$$h\ten 0 \quad \text{ and }\quad \frac{\delta_h}{h}\ten 0\quad (\text{as }h\ten 0).$$ In other words, the diameter of the cross-section is of order $h$ and is assumed to be much larger than the cross-section thickness $\delta_h$. 

To any deformation $u\in W^{1,2}(\Omega_h;\mathbb{R}^3)$, we associate the
elastic energy (per unit cross-section) defined by
\begin{equation}
\nonumber
 \cal E^h(u):=\frac{1}{h\delta_h} \int_{\Omega_h}{ W(\nabla u(x))dx},
\end{equation}
where the energy density $W$ satisfies the usual assumptions of nonlinear elasticity (see Section \ref{setting}). We are interested in understanding the behaviour, as $h\ten 0$, of sequences of deformations $(u^h)$ satisfying 
\begin{equation}
\label{ene1}
\cal{E}^h(u^h)\leq C \epsilon_h^2,
\end{equation}
where $(\epsilon_h)$ is a given sequence of positive numbers. Estimate \eqref{ene1} is satisfied, for instance, by global minimizers of the total energy 
$$\cal{E}^h(u)-\frac{1}{h\delta_h}\int_{\Omega_h}{u\cdot f^h dx}$$ 
when the applied body force $f^h:\Omega_h\longrightarrow \mathbb{R}^3$ is of a suitable order of magnitude with respect to $\epsilon_h$ (see \cite{F-M-P, F-M-P2}). The asymptotic behaviour of $(u^h)$, as $h\ten 0$, can be characterized by identifying the $\Gamma$-limit of the sequence of functionals $(\epsilon_h^{-2} \cal{E}^h)$. For the definition and properties of $\Gamma$-convergence we refer to the monograph \cite{DM}.\\ 

In this paper we mainly focus on the case where the sequence $(\epsilon_h)$ is infinitesimal and satisfies 
\begin{equation}
\label{lep}
\lim_{h\ten 0}\frac{\epsilon_h}{\delta_h^2}=:\ell\in [0,+\infty).
\end{equation}
In analogy with the results of \cite{F-M-P2}, this scaling is expected to correspond at the limit to partially or fully linearized models. Other scalings, different than \eqref{lep}, will be studied in a forthcoming paper.
 
 Assuming $\epsilon_h=o(\delta_h)$, as $h\ten 0$,  we first show (Theorem \ref{disp}) that any sequence $(u^h)$ satisfying \eqref{ene1} converges, up to a rigid motion, to the identity deformation on the mid-fiber of the rod; more precisely, defining $\Omega:=(0,L)\times (0,1)\times (-\frac{1}{2}, \frac{1}{2})$ and $\psi^h:\Omega\longrightarrow
\Omega_h$ as
$$\psi^h(x_1,s,t):=x_1e_1+h\gamma(s)+\delta_h t n(s)$$
for every $(x_1,s,t)\in\Omega$, we have that, up to rigid motions,
 $$y^h:=u^h\circ \psi^h\ten x_1e_1$$
strongly in $W^{1,2}(\Omega;\mathbb{R}^3)$. 

 To express the limiting functional, we introduce and study the compactness properties of some linearized quantities associated with the scaled deformations $y^h$. We consider the tangential derivative of the tangential displacement 
$$g^h(x_1,s,t):=\frac{1}{\epsilon_h}\partial_1(y^h_1-x_1)$$
for a.e. $(x_1,s,t)\in \Omega$, and the twist function
$$w^h(x_1,s):=\frac{\delta_h}{h\epsilon_h}\myintr{\partial_s (y^h-\psi^h)\cdot n}$$
for a.e. $(x_1,s)\in (0,L)\times(0,1)$. In Theorem \ref{disp}, under assumption \eqref{lep}, 
we prove that \begin{eqnarray*}
&g^h\deb g &\text{ weakly in }L^2(\Omega),\\
&w^h\ten w &\text{ strongly in }L^2((0,L)\times(0,1)),\\
\end{eqnarray*}
for some $g\in L^2((0,L)\times(0,1))$ and $w\in W^{1,2}(0,L)$. Moreover, the sequence of bending moments $\big(\frac{1}{h}\partial_s w^h\big)$ converges in the following sense:
$$\frac{1}{h}\partial_s w^h\deb b \text{ weakly in }W^{-1,2}((0,L)\times(0,1))$$
for some $b\in L^2((0,L)\times(0,1))$ (see Proposition \ref{fpwb}). In Theorem \ref{deep} we show that the limit quantities $w,g,b$ must satisfy some compatibility conditions that depend on the relative order of magnitude between $\delta_h$ and $h$. More precisely, assuming the existence of the limit
$$\mu :=\lim_{h\ten 0}\frac{\delta_h}{h^3},$$
three main regimes can be identified:
\begin{itemize}
\item $\mu=+\infty,$
\item $\mu\in (0, +\infty)$,
\item $\mu=0.$
\end{itemize}

\

In the first regime $\mu=+\infty$, one has that $g$ is the tangential derivative of the first component of a Bernoulli-Navier displacement in curvilinear coordinates, that is, there exists $v\in W^{1,2}((0,L)\times(0,1);\mathbb{R}^3)$ such that 
$$\partial_1 v \cdot e_1=g, \quad \partial_s v\cdot \tau=0, \quad \partial_s v\cdot e_1+\partial_1 v\cdot \tau=0\quad\text{ on }(0,L)\times(0,1),$$
where $\tau(s)$ denotes the tangent vector to the curve $\gamma$ at the point $\gamma(s)$. The structure of the cross-sectional components of $v$ depends on the existence and the value of the limit 
 $$\lambda := \lim_{h\ten 0}\frac{\delta_h}{h^2}.$$ 
 Indeed, if $\lambda=+\infty$, there exist $\alpha,\beta\in W^{1,2}(0,L)$ such that $$v(x_1,s)\cdot e_2=\alpha(x_1) \quad \text{ and }\quad v(x_1,s)\cdot e_3=\beta(x_1)$$
for every $(x_1,s) \in (0,L)\times(0,1)$. If $\lambda\in (0,+\infty)$, then one can show that the twist function $w$ belongs to $W^{2,2}(0,L)$ and the cross-sectional components of $v$ depend on $w$ in the following way: $$v(x_1,s)\cdot e_2=\alpha(x_1)-\tfrac{1}{\lambda}\,w(x_1)\gamma_3(s) \quad \text{ and }\quad v(x_1,s)\cdot e_3=\beta(x_1)+\tfrac {1}{\lambda}\,w(x_1)\gamma_2(s)$$
for every $(x_1,s) \in (0,L)\times(0,1)$ and for some $\alpha,\beta\in W^{1,2}(0,L)$. Finally, if $\lambda=0$, the twist function $w$ is affine, while the cross-sectional components of $v$ satisfy $$v(x_1,s)\cdot e_2=\alpha(x_1)-\delta(x_1)\gamma_3(s) \quad \text{ and }\quad v(x_1,s)\cdot e_3=\beta(x_1)+\delta(x_1)\gamma_2(s)$$ for every $(x_1,s) \in (0,L)\times(0,1)$ and for some $\alpha, \beta,\delta \in W^{1,2}(0,L)$. In other words, in the regime $\mu=+\infty$, the structure of $g$ is essentially one-dimensional. As for the bending moment $b$, we prove that it simply belongs to $L^2((0,L)\times(0,1))$. 

In the regime $\mu=0$, we still have that $g$ is the tangential derivative of the first component of a Bernoulli-Navier displacement in curvilinear coordinates, but only in an approximate sense (see the definition of the class $\cal{G}$ in Section \ref{limit}). Moreover, the bending moment $b$ is associated with an infinitesimal isometry of the cylindrical surface $$\{x_1e_1+\gamma(s):x_1\in (0,L), s\in (0,1)\},$$
in the sense that there exists $\phi\in L^2((0,L)\times(0,1);\mathbb{R}^3)$, with $\partial_s \phi\in L^2((0,L)\times(0,1);\mathbb{R}^3)$, such that $$\partial_1 \phi\cdot e_1=0,\quad \partial_s \phi\cdot \tau=0, \quad \partial_s \phi\cdot e_1+\partial_1 \phi\cdot \tau=0\quad \text{ on }(0,L)\times(0,1)$$
and
$$\partial_s(\partial_s \phi \cdot n)=b \quad \text{ on }(0,L)\times(0,1).$$
The equalities are intended in the sense of distributions; some higher regularity for $\phi$ can be proved (see Remark \ref{rksyme}). In other words, in this regime the limit kinematic  description of the thin-walled beam is intrinsically two-dimensional.
 
In the intermediate regime $\mu\in (0,+\infty)$, the limit quantities $g$ and $b$ are no more mutually independent but they must satisfy the following constraint: there exists $\phi\in L^2((0,L)\times(0,1);\mathbb{R}^3)$, with $\partial_s \phi\in L^2((0,L)\times(0,1);\mathbb{R}^3)$, such that $$\partial_1 \phi\cdot e_1=\mu g,\quad \partial_s \phi\cdot \tau=0, \quad \partial_s \phi\cdot e_1+\partial_1 \phi\cdot \tau=0\quad \text{ on }(0,L)\times(0,1)$$
and
$$\partial_s(\partial_s \phi \cdot n)=b \quad \text{ on }(0,L)\times(0,1).$$

Finally, for the twist function $w$, we show that it is affine for $\mu\in [0,+\infty)$. 

The $\Gamma$-limit functional is expressed in terms of the limit quantities $w,g,b$ and, according to the values of $\lambda$ and $\mu$, is finite only on the class $\cal{A}_{\lambda,\mu}$ of triples $(w,g,b)$ with the structure described above. In Theorems \ref{liminft} and \ref{limsupt} we prove that for $(w,g,b)\in \cal{A}_{\lambda,\mu}$ the $\Gamma$-limit is given by the functional 
 $$\cal{J}_{\lambda,\mu}(g,w,b)=\frac{1}{24}\myintss{Q_2(s,w',b)}+\frac{1}{2}\myintss{\mathbb{E} g^2},$$ where $Q_2$ is a positive definite quadratic form and $\mathbb{E}$ is a positive constant, for which an explicit formula is provided (see \eqref{defE} and \eqref{conrm}).\\

The proofs of compactness and of the liminf inequality rely on the rigidity estimate due to Friesecke, James and M\"uller (Theorem \ref{fjm}) and on a rescaled two-dimensional Korn's inequality in curvilinear coordinates for cross-sectional displacements (Theorem \ref{Korn}).
The key ingredients in the construction of the recovery sequences are some approximation results for triples in the classes $\cal{A}_{\lambda,\mu}$ in terms of smooth functions (see Section \ref{limit}).
In the regime $\mu=0$ the approximation result is proved under the additional assumption that the set where the curvature of $\gamma$ vanishes is the union of a finite number of intervals and isolated points. Therefore, for $\mu=0$ the $\Gamma$-convergence result is valid only under this additional restriction.\\

The dependence of the $\Gamma$-limits on the rate of convergence of the thickness parameter $\delta_h$ with respect to the cross-section diameter $h$ is an effect of the nontrivial geometry of the cross-section. Indeed, in the case of a rectangular cross-section this phenomenon is not observed for the scalings \eqref{lep} and is conjectured to arise only for scalings $\epsilon_h$ such that $\delta_h^2 \ll \epsilon_h \leq \delta_h$ (see \cite{F-M-P, F-M-P2}). 

Another difference with respect to \cite{F-M-P2} is that, in general, one can not rely on a three-dimensional Korn's inequality on $\Omega$ to guarantee compactness of the sequence of cross-sectional displacements. However, the two-dimensional Korn's inequality proved in Theorem \ref{Korn} allows us to implicitly determine the cross-sectional displacements in the limit models through the characterization of~ $g$ (see the proof of Theorem \ref{deep}). 

\

The paper is organized as follows. In Section \ref{setting} we describe the setting of the problem. In Section \ref{preliminary} we recall some preliminary results and prove the rescaled Korn's inequality in curvilinear coordinates. In Section \ref{limit} we discuss some approximation results for displacements and bending moments. Section \ref{compactness} is devoted to the proof of the compactness results, while  Section \ref{characterization} to the liminf inequality. Finally, in Section \ref{construction} we construct the corresponding recovery sequences.\\
   
{\bf Notation.} We shall denote the canonical basis of $\mathbb{R}^3$ by $\{e_1,e_2,e_3\}$. If $\alpha:(0,L)\longrightarrow \mathbb{R}^m$ is a function of the $x_1$ variable, we shall denote its derivative, when it exists, by $\alpha'$, while if $\alpha:(0,1)\longrightarrow \mathbb{R}^m$ is a function of the $s$ variable, we shall denote its derivative by $\dot{\alpha}$. The $k$-th component of a vector $v$ will be denoted by $v_k$. For every $v,w\in \mathbb{R}^n$, we shall denote their scalar product by $v\cdot w$. We endow the space $\mathbb{M}^{n\times n}$ of $n\times n$ matrices with the euclidean norm $|M|:=\sqrt{Tr(M^T M)}=\sqrt{\sum_{i,j=1,\cdots,n}{m_{ij}^2}}$ and denote by the colon $:$ the associated scalar product.
For every $j\in\mathbb{N}$, $C^j_0(A;\mathbb{R}^m)$ and $C^{\infty}_0(A;\mathbb{R}^m)$ will be respectively the standard spaces of $C^j$ and $C^{\infty}$ functions with compact support in $A$.
\section{Setting of the problem}
\label{setting}
\noindent Let $(h),(\delta_h)$ be two sequences of
positive numbers such that $h\ten 0$ and 
\begin{equation}\label{scaling}\lim_{h\ten 0}\frac{\delta_h}{h}= 0.\end{equation} We
shall consider a thin-walled elastic beam, whose reference configuration is given by the set 
\begin{eqnarray}
\nonumber\Omega_h:&\hspace{-0.3 cm}=&\hspace{-0.3 cm}\Big{\{}x_1e_1+h\gamma(s)+\delta_htn(s):x_1\in
(0,L),\,s\in(0,1),\,t\in\Big(-\frac{1}{2},\frac{1}{2}\Big)\Big{\}},
\end{eqnarray}
where $\gamma:[0,1]\longrightarrow \mathbb{R}^3$,
$\gamma(s)=(0,\gamma_2(s),\gamma_3(s))$ is a simple, planar curve of class $C^6$
parametrized by arclength and $n(s)$ is the normal vector to the curve $\gamma$ at the point $\gamma(s)$. We shall denote by $\tau(s):=\dot{\gamma}(s)$ the tangent vector to
$\gamma$ at the point $\gamma(s)$, so that  
$$n(s)=\Bigg(\begin{array}{c}0\\ -\tau_3(s)\\\tau_2(s)\end{array}\Bigg)$$ for every $s\in [0,1]$. We define also the map $R_0:[0,1]\longrightarrow SO(3)$ given by
$$R_0(s):=\Big(e_1\,\Big |\,\tau(s)\, \Big |\, n(s)\Big)
$$ for every $s\in [0,1]$. For the sake of notation we introduce the two-dimensional vectors
$$\lin{\tau}(s):=\Big(\begin{array}{c}\tau_2(s)\\\tau_3(s)\end{array}\Big),\quad \lin{n}(s):=\Big(\begin{array}{c}-\tau_3(s)\\\tau_2(s)\end{array}\Big)$$ and the $2\times 2$ rotation
$$\lin{R}_0(s):=(\lin{\tau}(s)\Big|\lin{n}(s))$$
for every $s\in [0,1]$. Let \mbox{$k(s):=\dot{\tau}(s)\cdot n(s)$} be the curvature of $\gamma$ at the point $\gamma(s)$. We shall assume that $k$ is not identically equal to zero. Finally, let \mbox{$N,T:[0,1]\longrightarrow \mathbb{R}$} be the functions defined by \mbox{$N:=\gamma \cdot n$} and \mbox{$T:=\gamma \cdot \tau$}.\\

 For every $u\in W^{1,2}(\Omega_h;\mathbb{R}^3)$, we define the
elastic energy (per unit cross-section) associated with $u$ by 
\begin{equation}
 \cal E^h(u):=\frac{1}{h\delta_h} \int_{\Omega_h}{ W(\nabla u(x))dx}.
\end{equation}
The stored-energy density $W:\mathbb{M}^{3\times 3}\to[0,+\infty]$ is assumed to satisfy
the following conditions:
\begin{itemize}
\medskip
\item[(H1)] $W$ is continuous;
\medskip
\item[(H2)] $W(RF)=W(F)$ for every $R\in SO(3)$, $F\in\mthree$
(frame indifference);
\medskip
\item[(H3)] $W=0$ on $SO(3)$;
\medskip
\item[(H4)] $\exists C>0$ such that $W(F)\geq C\, \dist^2(F,SO(3))$ for every $F\in\mthree$;
\medskip
\item[(H5)] $W$ is of class $C^2$ in a neighbourhood of $SO(3)$,
\medskip
\end{itemize}
where $SO(3):=\{R\in\mthree:R^TR=Id,\ \det R=1\}$.

As usual in dimension reduction problems, we scale the deformations and the corresponding energy to a fixed domain. We set \mbox{$\Omega:=(0,L)\times (0,1)\times (-\frac{1}{2}, \frac{1}{2})$}. In the following we shall also consider the set
$$\omega:=(0,L)\times(0,1)$$
and the scaled cross-section
$$S:=(0,1)\times \big(-\frac{1}{2},\frac{1}{2}\big).$$ 
We define the maps $\psi^h:\Omega\longrightarrow
\Omega_h$ as
$$\psi^h(x_1,s,t):=x_1e_1+h\gamma(s)+\delta_h t n(s),$$
for every $(x_1,s,t)\in\Omega$ and we notice that there exists $h_0>0$ such that $\psi^h$ is a bijection for every $h\in(0,h_0)$. To every deformation $u\in W^{1,2}(\Omega_h;\mathbb{R}^3)$ we associate a scaled deformation $y\in W^{1,2}(\Omega;\mathbb{R}^3)$, defined by $y:=u\comp
\psi^h$, so that we can rewrite the elastic energy as 
\begin{equation}
\label{defjh}
 \cal{E}^h(u)=\cal{J}^h(y):=\myintom{\Big(\frac{h-\delta_h tk
}{h}\Big)W(\nabla_{h,\delta_h}y R_0^T)},
\end{equation}
where
$$\nabla_{h,\delta_h}y:=\Big(\partial_1
y\,\Big |\,\frac{1}{h-\delta_h t k}\,\partial_s
y\,\Big |\,\frac{1}{\delta_h}\,{\partial_t y}\Big).$$
We observe that $$\nabla_{h,\delta_h}\psi^h=R_0.$$
Moreover, since $k$ is a bounded function and \eqref{scaling} holds, we have that \begin{equation}\label{uncon}\frac{h-\delta_h t k}{h}\ten 1\end{equation} uniformly in $\lin{S}$. In particular, for $h$ small enough it follows that $h-\delta_h t k>0$
 for every $s\in[0,1]$ and \mbox{$t\in[-\frac{1}{2},\frac{1}{2}]$}.
 
  Throughout this article we shall consider sequences of scaled deformations $(y^h)$
in $W^{1,2}(\Omega;\mathbb{R}^3)$ satisfying 
\begin{equation}
\label{conditionenergy}
\myintom{\Big(\frac{h-\delta_h tk
}{h}\Big)W(\nabla_{h,\delta_h}y^h R_0^T)}\leq C\epsilon_h^2,
\end{equation}
 where $(\epsilon_h)$ is a given sequence of positive numbers. We shall mainly focus on the case where $(\epsilon_h)$  is infinitesimal of order larger or equal than $(\delta_h^2)$, that is, we shall assume that
 \begin{equation}
\label{l}
\exists \lim_{h\ten 0}\frac{\epsilon_h}{\delta_h^2}=:\ell\in [0,+\infty).
\end{equation}

\

A key role will be played by the quadratic form of linearized elasticity $Q_3:\mthree\longrightarrow [0,+\infty)$ defined by
$$Q_3(F):=D^2W(Id)F:F \quad \text{ for every }F\in \mthree.$$
The limiting functionals will involve the constant 
\begin{equation}
\label{defE}
\mathbb{E}:=\min_{a,b\in \mathbb{R}^3}{Q_3(e_1|a|b)}
\end{equation} and the quadratic form $Q_2:[0,1]\times \mathbb{R}^2\longrightarrow [0,+\infty)$ defined by
\begin{equation} 
\label{conrm}
Q_2(s,a,b)=\min_{\sigma_i\in \mathbb{R}}{Q_3\Bigg(R_0(s)\Bigg(
\begin{array}{ccc}
0&a&\sigma_1\\ a&b &\sigma_2\\\sigma_1&\sigma_2&\sigma_3
\end{array}
\Bigg)
R_0^T(s)\Bigg)}
\end{equation}
for any $s\in [0,1]$ and for any $(a,b)\in\mathbb{R}^2$. It is well known that, owing to (H2)--(H5), $Q_3$ is a positive semi-definite quadratic form and is positive definite on symmetric matrices. Hence, $\mathbb{E}>0$ and $Q_2(s,a,b)$ is strictly positive for every $s\in [0,1]$ and every $(a,b)\neq (0,0)$ .
\section{Preliminary results}
\label{preliminary}
\noindent In this section we collect some results which will be useful to prove a liminf inequality for the rescaled energies.

A first key tool to establish compactness of deformations with equibounded energies is the following rigidity estimate, due to Friesecke, James, and M\"uller \cite[Theorem 3.1]{F-J-M}.
\begin{teo}
\label{fjm}
Let $U$ be a bounded Lipschitz domain in $\mathbb{R}^n$, $n\geq 2$. Then there exists a constant $C(U)$ with the following properties: for every $v\in W^{1,2}(U;\mathbb{R}^n)$ there is an associated rotation $R\in SO(n)$ such that
$$\| \nabla v-R\|_{L^2(U)}\leq C(U)\|\dist(\nabla v, SO(n))\|_{L^2(U)}.$$
\end{teo}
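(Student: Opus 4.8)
This is the Friesecke--James--M\"uller rigidity theorem, and the plan is to follow their strategy in four stages.

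\emph{Elementary reductions.} First I would observe that both sides of the inequality are unchanged if $v$ is replaced by $Qv$ with $Q\in SO(n)$ fixed (left invariance of $\dist(\cdot,SO(n))$), and that they scale the same way under homotheties, so one may assume $U$ has diameter $1$; moreover it is enough to prove the estimate on one convenient shape, say a ball, and, more generally, on bounded Lipschitz domains of a controlled type. Indeed, since $\partial U$ is locally a Lipschitz graph, one can cover $\overline U$ by finitely many balls $B_1,\dots,B_N$ so that each $D_i:=U\cap B_i$ is, up to a rigid motion, such a controlled domain, consecutive pieces overlap in sets of a fixed positive measure, and --- because $U$ is connected --- the nerve of the cover is connected. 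Given the estimate on each $D_i$, with rotations $R_i$ and $\eta:=\|\dist(\nabla v,SO(n))\|_{L^2(U)}$, one has $|R_i-R_j|^2|D_i\cap D_j|\le 2\|\nabla v-R_i\|_{L^2(D_i)}^2+2\|\nabla v-R_j\|_{L^2(D_j)}^2\le C\eta^2$ whenever $|D_i\cap D_j|>0$, so chaining through the nerve gives $|R_i-R_j|\le C\eta$ for all $i,j$, and then $\|\nabla v-R_1\|_{L^2(U)}^2\le\sum_i\bigl(2\|\nabla v-R_i\|_{L^2(D_i)}^2+2|R_i-R_1|^2|D_i|\bigr)\le C\eta^2$.

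\emph{Two tools on a ball.} On a fixed ball $B$ I would rely on two ingredients. The first is a \emph{Lipschitz truncation}: since $\dist(F,SO(n))$ small forces $|F|\le\sqrt n+\dist(F,SO(n))$, the maximal-function truncation at a fixed large level $\Lambda$ produces $\tilde v\in W^{1,\infty}(B;\mathbb{R}^n)$ with $\|\nabla\tilde v\|_{L^\infty}\le C\Lambda$ and $|\{\nabla v\neq\nabla\tilde v\}|\le C\Lambda^{-2}\int_B\dist^2(\nabla v,SO(n))$ (weak $(2,2)$ bound for the maximal operator), whence both $\int_B|\nabla v-\nabla\tilde v|^2\le C\int_B\dist^2(\nabla v,SO(n))$ and $\int_B\dist^2(\nabla\tilde v,SO(n))\le C\int_B\dist^2(\nabla v,SO(n))$; thus it suffices to treat maps with a uniformly bounded gradient. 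The second is the \emph{linearized picture near $SO(n)$}: the expansion $\dist(Id+G,SO(n))=|\sym G|+O(|G|^2)$ for small $G$, together with the classical $L^2$ Korn inequality $\|\nabla u-\mathrm{skew}\bigl(\fint_B\nabla u\bigr)\|_{L^2(B)}\le C\|\sym\nabla u\|_{L^2(B)}$.

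\emph{The core estimate.} It then remains to produce $C=C(B)$ so that every $v\in W^{1,\infty}(B;\mathbb{R}^n)$ with $\|\nabla v\|_{L^\infty}\le M$ admits $R\in SO(n)$ with $\|\nabla v-R\|_{L^2(B)}^2\le C\int_B\dist^2(\nabla v,SO(n))$, and I would argue by contradiction and compactness. Suppose $v_k$ satisfy $\|\nabla v_k\|_{L^\infty}\le M$, $\varepsilon_k^2:=\int_B\dist^2(\nabla v_k,SO(n))$, $\delta_k^2:=\inf_{R\in SO(n)}\|\nabla v_k-R\|_{L^2(B)}^2$ and $\delta_k^2>k\varepsilon_k^2$; boundedness of $\delta_k^2$ forces $\varepsilon_k\to0$, so $\dist(\nabla v_k,SO(n))\to0$ in $L^2$. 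After subtracting constants, $\nabla v_k\rightharpoonup G$; the crux is that $G\in SO(n)$ a.e., which reflects the rigidity of $SO(n)$ --- at the qualitative level, a $W^{1,2}$ map whose gradient lies a.e.\ in $SO(n)$ on a connected domain is affine (Liouville's theorem), and here this is needed in a quantitative/compactness form (the gradient Young measure generated by $(\nabla v_k)$ is supported in $SO(n)$, hence a Dirac mass at a constant rotation). Thus $G\equiv\bar R\in SO(n)$, which after one more fixed left rotation is taken to be $Id$, and then $\nabla v_k\to Id$ strongly in $L^2$. Choosing optimal rotations $R_k\to Id$ and setting $w_k:=\delta_k^{-1}(v_k-R_kx-c_k)$ with $\fint_B w_k=0$, one has $\|\nabla w_k\|_{L^2(B)}=1$ and $w_k\rightharpoonup w$; by left invariance and the Taylor expansion, $\varepsilon_k^2=\int_B\dist^2\bigl(Id+\delta_k R_k^{T}\nabla w_k,SO(n)\bigr)=\delta_k^2\int_B|\sym\nabla w_k|^2+o(\delta_k^2)$, the quadratic remainder being absorbed via a Lipschitz truncation of the rescaled maps $w_k$; hence $\int_B|\sym\nabla w_k|^2\le\varepsilon_k^2/\delta_k^2+o(1)\to0$, so by Korn $\nabla w_k\to A$ strongly for a constant skew $A$ with $\|A\|_{L^2(B)}=1$. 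Consequently $\nabla v_k$ is $L^2$-close to $R_k(Id+\delta_k R_k^{T}A)$ to order $o(\delta_k)$, and since $\dist(Id+\delta_k R_k^{T}A,SO(n))=o(\delta_k)$ (because $A$ is skew), there is $\hat R_k\in SO(n)$ with $\|\nabla v_k-\hat R_k\|_{L^2(B)}=o(\delta_k)$, contradicting $\delta_k^2=\inf_R\|\nabla v_k-R\|_{L^2(B)}^2$.

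\emph{Where the difficulty is.} The soft parts (covering and patching, the $L^2$ Korn inequality, the Taylor expansion) are routine; essentially all of the work sits in the core estimate, and within it in two places. The first is showing that the weak limit $G$ is $SO(n)$-valued: since $F\mapsto\dist^2(F,SO(n))$ vanishes exactly on $SO(n)$ but is \emph{not} quasiconvex, this does not follow from weak lower semicontinuity, and a sequence of gradients approaching $SO(n)$ in $L^2$ could a priori oscillate finely between far-apart rotations --- ruling this out is precisely the rigidity of $SO(n)$ (the quantitative counterpart of Liouville's theorem), and it is the true heart of the proof. The second is controlling, uniformly after the blow-up, the quadratic remainder of the linearization, for which one again needs a Lipschitz-truncation argument, this time on the rescaled displacements. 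Obtaining the \emph{linear} (rather than, say, H\"older) dependence of the constant is exactly what forces the combination of the rigidity input with the sharp $L^2$ Korn inequality.
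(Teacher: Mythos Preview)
The paper does not prove this statement at all: Theorem~\ref{fjm} is simply quoted from \cite[Theorem~3.1]{F-J-M} as a preliminary tool, with no proof given. So there is nothing in the paper to compare your argument against; you have supplied far more than the paper does.

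As for the sketch itself, it is worth noting that it is \emph{not} the original Friesecke--James--M\"uller proof. Their argument is direct rather than by compactness: after the Lipschitz truncation, they exploit the Piola identity $\div(\mathrm{cof}\,\nabla v)=0$ together with the pointwise bound $|\nabla v-\mathrm{cof}\,\nabla v|\le C\,\dist(\nabla v,SO(n))$ for bounded $\nabla v$ to show that $v$ is close to a harmonic map, obtain an interior estimate from elliptic regularity, and then pass to the boundary via a weighted Poincar\'e/covering argument. Your contradiction-and-blow-up scheme is a legitimate alternative route (variants appear in the literature), but it buys existence of \emph{some} constant without the explicit control on how $C(U)$ depends on the Lipschitz geometry; in particular, the uniformity stated in Remark~\ref{ofjm} --- which the paper actually uses --- does not follow transparently from a soft compactness argument and is one reason the direct proof is preferred.

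Within your sketch, the genuinely delicate point is the one you flag yourself: after rescaling by $\delta_k$, the maps $w_k$ are \emph{not} uniformly Lipschitz (only $\|\nabla w_k\|_{L^2}=1$), so the Taylor remainder $\dist^2(Id+\delta_k R_k^T\nabla w_k,SO(n))-\delta_k^2|\sym\nabla w_k|^2$ is not automatically $o(\delta_k^2)$ in $L^1$. Saying this is ``absorbed via a Lipschitz truncation of the rescaled maps $w_k$'' hides real work: one must show that the bad set where $|\delta_k\nabla w_k|$ is large carries a negligible fraction of $\int|\nabla w_k|^2$, which requires going back to the original $v_k$ and using the first-step strong convergence $\nabla v_k\to\bar R$ quantitatively. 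This can be made to work, but as written it is the gap in the argument.
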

\begin{oss}
\label{ofjm}
The constant $C(U)$ in Theorem \ref{fjm} is invariant by translations and dilations of $U$ and is uniform for families of sets which are uniform bi-Lipschitz images of a cube.
\end{oss}
Another crucial result in the proof of the liminf inequality is a modified version of the Korn's inequality in curvilinear coordinates. We refer to \cite{H} for a survey on Korn's inequality on bounded domains and to \cite{C2} for an overview on standard Korn's inequalities in curvilinear coordinates. 

We first fix some notation. We recall that $S=(0,1)\times (-\frac{1}{2},\frac{1}{2})$. For any $\epsilon>0$ and $v\in W^{1,2}(S;\mathbb{R}^2)$ we set
\begin{equation}
\label{eeps}
\lin{\nabla}_{\epsilon}v:=\Big(\frac{1}{1-\epsilon t k}\,{\partial_s v}\Big |\frac{1}{\epsilon}\,{\partial_t v}\Big)
\end{equation}
and we consider the subspace
$$M_{\epsilon}:=\Big\{v\in W^{1,2}(S;\mathbb{R}^2): \sym(\nn)=0\Big\}.$$
We remark that the expression $\sym(\nn)$ represents the linearized strain associated with the displacement $v\circ (\lin{\psi}{}^{\epsilon})^{-1}$, where
\begin{equation}
\label{psie}
\lin{\psi}{}^{\epsilon}(s,t):=\lin{\gamma}(s)+\epsilon t \lin{n}(s)
\end{equation}
for every $(s,t)\in S$. Since $M_{\epsilon}$ is closed in $W^{1,2}(S;\mathbb{R}^2)$, the orthogonal projection
$$\Pi_{\epsilon}:W^{1,2}(S;\mathbb{R}^2)\longrightarrow M_{\epsilon}$$
 is well defined.
We also introduce the set
\begin{equation}
\label{defm0}
M_0:=\Big\{v\in W^{1,2}(S;\mathbb{R}^2) : \partial_t v=0,\,\partial_s v\cdot \tg=0,\,\partial_s(\partial_s v\cdot \n)=0\Big\},
\end{equation} which will play a key role in the proof of the Korn's inequality. 

The following characterization of the spaces $M_{\epsilon}$ and $M_0$ can be given.
\begin{lem}
\label{struttmel}
Let $v\in M_0$. Then there exist $\alpha_1, \alpha_2,\alpha_3\in\mathbb{R}$ such that 
\begin{equation}
\label{struttm0}
v(s,t)=\Big(\begin{array}{c}\alpha_2\\\alpha_3\end{array}\Big)+\alpha_1\Big(\begin{array}{c}-\gamma_3(s)\\\gamma_2(s)\end{array}\Big)
\end{equation}
for every $(s,t)\in S$.\\
Let $v \in M_{\epsilon}$. Then there exist $\alpha_1, \alpha_2,\alpha_3 \in\mathbb{R}$ such that
\begin{equation}
\label{struttme}
v(s,t)=\Big(\begin{array}{c}\alpha_2\\\alpha_3\end{array}\Big)+\alpha_1\Big(\begin{array}{c}-\gamma_3(s)\\\gamma_2(s)\end{array}\Big)-\epsilon t \alpha_1\tg(s)
\end{equation}
for every $(s,t)\in S$.
\end{lem}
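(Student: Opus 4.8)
The plan is to treat the two cases in parallel, since the structure of $M_0$ is essentially the ``$\epsilon=0$'' limit of the structure of $M_\epsilon$. First I would analyse $M_\epsilon$. Let $v\in M_\epsilon$, so $\sym(\lin\nabla_\epsilon v\,\lin R_0^T)=0$. Writing out the three independent scalar equations of this symmetric $2\times2$ strain in the moving frame $\{\lin\tau,\lin n\}$, the $(t,t)$-entry gives $\partial_t v\cdot\lin n=0$, the off-diagonal entry gives $\tfrac{1}{\epsilon}\partial_t v\cdot\lin\tau+\tfrac{1}{1-\epsilon tk}\partial_s v\cdot\lin n=0$, and the $(s,s)$-entry gives $\partial_s v\cdot\lin\tau=0$ (the normalising factor $1-\epsilon tk$ is nonzero for $h$, hence $\epsilon$, small, so it can be cleared throughout). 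The plan is then to decompose $v=a(s,t)\lin\tau(s)+c(s,t)\lin n(s)$ in the moving frame and to rewrite the three relations as ODEs in $s$ and $t$ for the scalar fields $a$ and $c$, using the Frenet-type identities $\dot{\lin\tau}=k\,\lin n$ and $\dot{\lin n}=-k\,\lin\tau$ (which follow from the definition of $k$ and the fact that $\lin n$ is the rotation of $\lin\tau$ by $\pi/2$). From $\partial_t v\cdot\lin n=0$ I get $\partial_t c=0$; differentiating $v\cdot\lin n$ in $s$ and using $\partial_s v\cdot\lin n$ from the second equation, together with $\partial_s v\cdot\lin\tau=0$, I obtain a closed first-order system forcing $a$ to be affine in $t$ with the specific coefficient $-\epsilon t\,\alpha_1$ and $c,\ a$ to have the stated $s$-dependence; integrating yields precisely \eqref{struttme} with constants $\alpha_1,\alpha_2,\alpha_3$.

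For $M_0$ the argument is the same but simpler: the defining conditions $\partial_t v=0$, $\partial_s v\cdot\tg=0$, $\partial_s(\partial_s v\cdot\n)=0$ are already in ``frame'' form. Setting $v=a(s)\lin\tau+c(s)\lin n$ (the $t$-independence is immediate), the condition $\partial_s v\cdot\lin\tau=0$ becomes $\dot a-kc=0$, and $\partial_s v\cdot\lin n=\dot c+ka$, so $\partial_s(\partial_s v\cdot\lin n)=0$ reads $\ddot c+\dot k a+k\dot a=0$. Substituting $\dot a=kc$ gives $\ddot c+\dot k a+k^2 c=0$; but also $\tfrac{d}{ds}(\dot c+ka)=\ddot c+\dot k a+k\dot a$, and one recognises that $\dot c+ka$ is constant, say equal to $\alpha_1$. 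Then $\dot a=kc$ and $\dot c=\alpha_1-ka$ is a linear system whose solution, after comparison with the explicit candidate, is $a(s)=-\alpha_1\gamma_3(s)\tau_2(s)+\dots$; the cleanest route is to verify directly that $w(s):=\alpha_1(-\gamma_3,\gamma_2)^T+(\alpha_2,\alpha_3)^T$ satisfies $\dot w=\alpha_1(-\tau_3,\tau_2)^T=\alpha_1\lin n$, hence $\dot w\cdot\lin\tau=\alpha_1\,\lin n\cdot\lin\tau=0$ and $\dot w\cdot\lin n=\alpha_1$ is constant, so $w\in M_0$; conversely the system above has a two-plus-one–dimensional solution space, matching the three free constants, so every element of $M_0$ is of this form. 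This gives \eqref{struttm0}.

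The main obstacle I anticipate is purely bookkeeping: correctly expanding $\sym(\lin\nabla_\epsilon v\,\lin R_0^T)=0$ into scalar equations and keeping the factor $1/(1-\epsilon tk)$ straight while differentiating the moving-frame vectors $\lin\tau(s),\lin n(s)$. Once the three scalar relations are in hand, the extraction of \eqref{struttme} and \eqref{struttm0} is an elementary ODE integration; the slightly delicate point is seeing that the off-diagonal equation forces exactly the extra term $-\epsilon t\alpha_1\lin\tau(s)$ and not a more general $t$-dependence — this uses that $\partial_t c=0$ (so $c=c(s)$) together with the fact that $\partial_s v\cdot\lin n$ is then independent of the $t$-affine part of $a$, which pins the coefficient of $t$ in $a$ to be $-\epsilon$ times a constant. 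One should also note in passing that $\gamma=(\gamma_2,\gamma_3)$ being an arclength parametrisation with $|\dot\gamma|=1$ is what makes $\lin n=(-\tau_3,\tau_2)^T$ a genuine unit normal and the Frenet relations hold; the hypothesis that $k\not\equiv0$ is not needed for this lemma (it is used elsewhere), so the characterisation holds regardless.
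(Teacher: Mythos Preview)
Your plan is correct, but it takes a considerably longer computational route than the paper's proof. For $M_\epsilon$, the paper simply observes (as stated right after the definition of $M_\epsilon$) that $\sym(\lin\nabla_\epsilon v\,\lin R_0^T)$ is precisely the linearized strain of the displacement $v\circ(\lin\psi^{\epsilon})^{-1}$ on the physical domain $\lin\psi^{\epsilon}(S)$; hence $v\in M_\epsilon$ means $v\circ(\lin\psi^{\epsilon})^{-1}$ is a planar infinitesimal rigid displacement, so $v\circ(\lin\psi^{\epsilon})^{-1}(x_2,x_3)=(\alpha_2,\alpha_3)^T+\alpha_1(-x_3,x_2)^T$, and composing with $\lin\psi^{\epsilon}(s,t)=\lin\gamma(s)+\epsilon t\,\lin n(s)$ gives \eqref{struttme} immediately. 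For $M_0$, the paper notes that the three defining conditions say exactly that $\partial_s v$ has zero $\lin\tau$-component and constant $\lin n$-component, i.e.\ $\partial_s v=\delta\,\lin n$ for some constant $\delta$; since $\lin n=(-\dot\gamma_3,\dot\gamma_2)^T$, a single integration yields \eqref{struttm0}. Your moving-frame decomposition $v=a\lin\tau+c\lin n$ followed by an ODE analysis recovers the same answers and is a perfectly valid alternative; what the paper's argument buys is that it avoids all the bookkeeping you flag as the main obstacle (tracking $1/(1-\epsilon tk)$, Frenet relations, the affine-in-$t$ structure of $a$), replacing it by a one-line appeal to the classical characterisation of infinitesimal rigid motions in Cartesian coordinates.
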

\begin{proof} 
It is immediate to see that, if $v\in M_0$, then $\partial_s v=\delta \lin{n}$ for some constant $\delta$, from which \eqref{struttm0} follows.

If $v\in M_{\epsilon}$, then $v\circ (\lin{\psi}{}^{\epsilon})^{-1}$ is an infinitesimal rigid displacement, that is, there exist $\alpha_1, \alpha_2,\alpha_3\in\mathbb{R}$ such that 
$$\Big(v\circ(\lin{\psi}^{\epsilon})^{-1}\Big)(x_2,x_3)=\Big(\begin{array}{c}\alpha_2\\\alpha_3\end{array}\Big)+\alpha_1\Big(\begin{array}{c}-x_3\\x_2\end{array}\Big)$$
for every $(x_2,x_3)\in \lin{\psi}{}^{\epsilon}(S)$. This implies \eqref{struttme}.
\end{proof}
Finally, we recall a lemma which is due to J.L. Lions.
\begin{lem}[Lemma of J.L. Lions]
\label{lio}
Let $U$ be a bounded, connected, open set in $\mathbb{R}^n$ with Lipschitz boundary and let v be a distribution on $U$. If $v\in W^{-1,2}(U)$ and $\partial_i v \in W^{-1,2}(U)$ for $i=1,\cdots,n$, then $v\in L^2(U)$. 
\end{lem}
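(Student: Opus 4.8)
The plan is to recall the standard proof, since this is a classical statement — the ``Lions lemma'' underlying the Ne\v{c}as inequality. The quickest route relies on de~Rham's theorem for the Stokes problem: on a bounded Lipschitz open set $U$, if $F\in W^{-1,2}(U;\mathbb{R}^n)$ annihilates every $\mathbf{w}\in C^{\infty}_0(U;\mathbb{R}^n)$ with $\mathrm{div}\,\mathbf{w}=0$, then $F=\nabla p$ for some $p\in L^2(U)$. Granting this, I would apply it to $F=\nabla v$, which by hypothesis belongs to $W^{-1,2}(U;\mathbb{R}^n)$: for any such $\mathbf{w}$ one has $\langle\nabla v,\mathbf{w}\rangle=-\langle v,\mathrm{div}\,\mathbf{w}\rangle=0$, so $\nabla v=\nabla p$ with $p\in L^2(U)$. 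Then $\nabla(v-p)=0$ on the connected open set $U$, hence the distribution $v-p$ is a constant, and therefore $v=p+\mathrm{const}\in L^2(U)$, as claimed.

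In this route the whole content has been transferred to de~Rham's theorem, which is equivalent to the solvability of $\mathrm{div}\,\mathbf{u}=q$ with $\mathbf{u}\in W^{1,2}_0(U;\mathbb{R}^n)$ and $\|\mathbf{u}\|_{W^{1,2}_0}\le C\|q\|_{L^2}$ for every $q\in L^2(U)$ of zero mean; this solvability (Bogovskii's explicit right inverse of the divergence on bounded Lipschitz domains) is the step I expect to be the main obstacle, but it is a classical result that does not itself rely on the present lemma, so it may be invoked freely. For completeness, the self-contained and historically original proof is the localization argument of Ne\v{c}as: using a smooth partition of unity one reduces to pieces $\theta_j v$ supported either well inside $U$ or near a portion of $\partial U$ written as a Lipschitz graph. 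For an interior piece $\theta_0 v$, extended by zero to $\mathbb{R}^n$, the product rule gives $\nabla(\theta_0 v)=\theta_0\nabla v+v\,\nabla\theta_0\in W^{-1,2}(\mathbb{R}^n)$, so that $(1+|\xi|^2)^{-1/2}\widehat{\theta_0 v}$ and $(1+|\xi|^2)^{-1/2}\xi_k\widehat{\theta_0 v}$ all lie in $L^2$; adding the squares and using $1+|\xi|^2=1+\sum_k\xi_k^2$ yields $\widehat{\theta_0 v}\in L^2(\mathbb{R}^n)$, i.e.\ $\theta_0 v\in L^2$ by Plancherel.

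The genuinely delicate point in this second route is the boundary piece: since $\partial U$ is merely Lipschitz it cannot be flattened by a smooth diffeomorphism, and one must argue with difference quotients, exploiting that for a compactly supported $f$ the condition $\nabla f\in W^{-1,2}$ is equivalent to $\|f(\cdot+\sigma)-f\|_{W^{-1,2}}\le C|\sigma|$ for all small $\sigma$ (one implication from $f(\cdot+\sigma)-f=\int_0^1\sigma\cdot\nabla f(\cdot+r\sigma)\,dr$, the other from a Fatou argument on the Fourier side applied to $\sigma=re_k$ as $r\to 0$), and choosing the translation vectors $\sigma$ to point into $U$ along the local Lipschitz graph. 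This is the technical heart of the argument, and it is exactly the difficulty that, in the first route, is packaged inside Bogovskii's construction; in either case one ends up with $v\in L^2(U)$.
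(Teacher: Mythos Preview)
The paper does not give a proof of this lemma at all: immediately after the statement it simply writes ``We refer to \cite{C2} for a detailed bibliography on this lemma'' and moves on, treating it as a black box to be invoked later (in the proof of Theorem~\ref{Korn} and in Remark~\ref{regol}). So there is no argument in the paper to compare yours against.

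Your proposal is a correct account of the two standard routes. The first route (via Bogovski\u{\i}'s right inverse of the divergence, hence the $L^2$ de~Rham theorem) is logically sound and, as you note, not circular: Bogovski\u{\i}'s operator is constructed by an explicit integral formula with Calder\'on--Zygmund estimates and does not rely on the Lions/Ne\v{c}as lemma. The second route (Ne\v{c}as's localization with Fourier transform in the interior and difference quotients near the Lipschitz boundary) is the argument that appears, essentially as you describe it, in the reference the paper cites. Either one would be accepted as a proof here; since the paper chose to quote rather than prove, your write-up goes beyond what the paper itself provides.
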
We refer to \cite[Section 1.7]{C2} for a detailed bibliography on this lemma. 

We are now in a position to state and prove a rescaled Korn's inequality in curvilinear coordinates.
\begin{teo}[Korn's inequality]
\label{Korn}
 There exist two constants $\epsilon_0>0$ and $C>0$ such that for any $\epsilon\in(0,\epsilon_0)$, $v\in W^{1,2}(S;\mathbb{R}^2)$, there holds
\begin{equation}
\label{korn}
\|v-\Pi_{\epsilon}(v)\|_{W^{1,2}(S)}\leq \frac{C}{\epsilon}\|\sym(\nn)\|_{L^2(S)}.\end{equation}
\end{teo}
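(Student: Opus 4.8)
The plan is to argue by contradiction, exploiting the scaling structure of $\lin{\nabla}_\epsilon$ together with a compactness argument that reduces the rescaled inequality to the classical Korn inequality on a fixed domain. Suppose \eqref{korn} fails. Then for every $n\in\mathbb{N}$ there exist $\epsilon_n\to 0$ (we may assume $\epsilon_n\to 0$; if instead $\epsilon_n$ stays bounded away from $0$ along a subsequence, one reduces to the standard Korn inequality in curvilinear coordinates on $S$, for which the projection onto infinitesimal rigid displacements does the job) and $v_n\in W^{1,2}(S;\mathbb{R}^2)$ with $v_n-\Pi_{\epsilon_n}(v_n)\neq 0$ such that
\begin{equation}
\nonumber
\|v_n-\Pi_{\epsilon_n}(v_n)\|_{W^{1,2}(S)} > \frac{n}{\epsilon_n}\,\big\|\sym\big(\lin{\nabla}_{\epsilon_n}v_n\lin{R}_0^T\big)\big\|_{L^2(S)}.
\end{equation}
Since $\Pi_{\epsilon_n}$ is the orthogonal projection onto $M_{\epsilon_n}$ and $\lin{\nabla}_{\epsilon_n}w\lin{R}_0^T$ vanishes symmetrically on $M_{\epsilon_n}$, replacing $v_n$ by $v_n-\Pi_{\epsilon_n}(v_n)$ changes neither side, so I may assume $\Pi_{\epsilon_n}(v_n)=0$, i.e. $v_n\perp M_{\epsilon_n}$, and then normalize $\|v_n\|_{W^{1,2}(S)}=1$. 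The contradiction hypothesis gives
\begin{equation}
\nonumber
\big\|\sym\big(\lin{\nabla}_{\epsilon_n}v_n\lin{R}_0^T\big)\big\|_{L^2(S)} < \frac{\epsilon_n}{n}\to 0.
\end{equation}

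Next I would extract the limiting behaviour. Write $E_n:=\sym(\lin{\nabla}_{\epsilon_n}v_n\lin{R}_0^T)$. Unpacking the definition \eqref{eeps}, the components of $\lin{\nabla}_{\epsilon_n}v_n$ are $\tfrac{1}{1-\epsilon_n t k}\partial_s v_n$ and $\tfrac{1}{\epsilon_n}\partial_t v_n$; from $\|E_n\|_{L^2}\to 0$ and the uniform bound $\tfrac{1}{1-\epsilon_n tk}\to 1$, one reads off that $\partial_t v_n\to 0$ in $L^2$ (from the $(2,2)$-component, after conjugating by $\lin R_0$, since $\lin R_0\in SO(2)$ the symmetric part controls each diagonal entry), hence $\tfrac1{\epsilon_n}\partial_t v_n$ is bounded in $L^2$ but $\partial_t v_n\to 0$ strongly; and the $(1,2)$ mixed component forces $\tfrac{1}{\epsilon_n}\partial_t v_n\cdot(\text{something})+\partial_s v_n\cdot(\text{something})$ to be small. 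Using $\|v_n\|_{W^{1,2}}=1$, up to a subsequence $v_n\rightharpoonup v$ weakly in $W^{1,2}(S;\mathbb{R}^2)$ and strongly in $L^2$; since $\partial_t v_n\to 0$ we get $\partial_t v=0$. Passing to the limit in the weak formulation of $E_n\to 0$ — here the crucial point is that the $\tfrac1{\epsilon_n}\partial_t v_n$ term, though a priori only bounded, pairs in the symmetrized expression only against $\lin n$-directions, so testing against functions with the right structure kills it — yields $\partial_s v\cdot\lin\tau=0$ and, after one more differentiation handled via the Lemma of J.L. Lions (Lemma \ref{lio}) applied to $\partial_s(\partial_s v\cdot\lin n)$, that $\partial_s(\partial_s v\cdot\lin n)=0$ in the distributional sense. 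Thus $v\in M_0$, and by Lemma \ref{struttmel} it has the rigid form \eqref{struttm0}.

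The final step is to derive the contradiction from strong convergence. The orthogonality $v_n\perp M_{\epsilon_n}$ together with the explicit description \eqref{struttme} of $M_{\epsilon_n}$ — which differs from $M_0$ only by the term $-\epsilon_n t\alpha_1\lin\tau(s)$, an $O(\epsilon_n)$ perturbation — shows that $v_n$ is nearly orthogonal to $M_0$, so passing to the limit $v\perp M_0$; combined with $v\in M_0$ this gives $v=0$. To upgrade the weak convergence $v_n\rightharpoonup 0$ to strong convergence in $W^{1,2}$ — which then contradicts $\|v_n\|_{W^{1,2}}=1$ — I would invoke the classical Korn inequality on the fixed domain $S$ in curvilinear coordinates applied to the (non-rescaled) strain $\sym(\lin{\nabla}_1 v_n\lin R_0^T)$: one checks that this strain is controlled by $E_n$ plus lower-order terms involving $\partial_t v_n$ (which we already know tends to $0$ strongly) and the difference between the $\epsilon_n$-weight and $1$; hence $\sym(\lin\nabla_1 v_n\lin R_0^T)\to 0$ in $L^2$, and classical Korn gives $\mathrm{dist}_{W^{1,2}}(v_n,\mathcal R)\to 0$ where $\mathcal R$ is the space of infinitesimal rigid displacements, so a subsequence of $v_n$ converges strongly to an element of $\mathcal R\cap\{v=0\}=\{0\}$, forcing $\|v_n\|_{W^{1,2}}\to 0$, the desired contradiction.

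I expect the main obstacle to be the careful bookkeeping in the limit passage for the term $\tfrac1{\epsilon_n}\partial_t v_n$: it is only bounded, not convergent, in $L^2$, so one must verify that in every place it appears in $\sym(\lin\nabla_{\epsilon_n}v_n\lin R_0^T)$ it is multiplied by a factor that makes it either vanish in the limit or produce exactly the constraint $\partial_s(\partial_s v\cdot\lin n)=0$ after integrating by parts against test functions; the use of Lemma \ref{lio} to make sense of that last constraint with only $W^{-1,2}$ a priori information is the technically delicate point.
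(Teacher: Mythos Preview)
Your overall strategy matches the paper's: argue by contradiction, set $\phi_n := v_n - \Pi_{\epsilon_n}(v_n)$, normalize $\|\phi_n\|_{W^{1,2}}=1$, extract a weak limit $\phi$, show $\phi \in M_0$ and $\phi \perp M_0$ so $\phi = 0$, and derive a contradiction from strong $W^{1,2}$ convergence. The gap is in your final step. Your claim that the non-rescaled strain $\sym(\lin\nabla_1 v_n\lin R_0^T)$ tends to $0$ in $L^2$ is false: its off-diagonal entry equals, up to a bounded factor, $\tfrac{1}{2}\big(\partial_s v_n\cdot\lin n + \partial_t v_n\cdot\lin\tau\big)$, and while $\partial_t v_n\cdot\lin\tau \to 0$ strongly, the term $\partial_s v_n\cdot\lin n$ is only bounded in $L^2$ and is precisely the component of $\nabla v_n$ whose strong convergence is at issue, so invoking classical Korn here is circular. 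The smallness of $(E_n)_{12}$ comes from a cancellation between $\partial_s v_n\cdot\lin n$ and $\tfrac{1}{\epsilon_n}\partial_t v_n\cdot\lin\tau$; dropping the $\tfrac{1}{\epsilon_n}$ factor destroys that cancellation.

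The paper closes this gap without any auxiliary Korn inequality, by proving directly that $\partial_s\phi_n\cdot\lin n$ converges strongly in $L^2$. The key is a Saint-Venant-type identity: expanding $\tfrac{1}{\epsilon_n}\partial_t\big((E_n)_{11}\big)$ (which is of order $1/n$ in $W^{-1,2}$) and comparing it with $\partial_s\big((E_n)_{12}\big)$ isolates the term $\partial_s\big(\tfrac{1}{1-\epsilon_n tk}\,\partial_s\phi_n\cdot\lin n\big)$ and shows it tends to $0$ in $W^{-1,2}$. Since also $\partial_t(\partial_s\phi_n\cdot\lin n)=\partial_s(\partial_t\phi_n\cdot\lin n)+k\,\partial_t\phi_n\cdot\lin\tau \to 0$ in $W^{-1,2}$, and $\partial_s\phi_n\cdot\lin n$ itself converges in $W^{-1,2}$ (from strong $L^2$ convergence of $\phi_n$), Lemma~\ref{lio} together with the closed graph theorem upgrades this to strong $L^2$ convergence of $\partial_s\phi_n\cdot\lin n$. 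This yields the strong $W^{1,2}$ convergence of $(\phi_n)$ and, simultaneously, $\phi\in M_0$. Your allusion to Lions' lemma points in the right direction, but the compatibility identity that makes it work is the missing ingredient, and it cannot be replaced by classical Korn on a fixed domain.
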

\begin{oss}
An analogous dependance of Korn's constant on the thickness of a thin structure has been proved, e.g. in \cite[Proposition 4.1]{K-V}, in the case of a thin plate with rapidly varying thickness.
\end{oss}
\begin{proof}[Proof of Theorem \ref{Korn}] By contradiction, assume there exist a sequence $(\epsilon_j)$ and a sequence of maps $(v^j)\subset W^{1,2}(S;\mathbb{R}^2)$ such that $\epsilon_j \ten 0$ and 
\begin{equation}
\label{contrk}
\|v^j-\Pi_{\epsilon_j}(v^j)\|_{W^{1,2}(S)}>\frac{j}{\epsilon_j}\|\sym(\nnn)\|_{L^2(S)},
\end{equation}
for every $j\in\mathbb{N}$.
Up to normalizations, we can assume that \mbox{$\|v^j-\Pi_{\epsilon_j}(v^j)\|_{W^{1,2}(S)}=1.$} We also set \mbox{$\phi^j:=v^j-\Pi_{\epsilon_j}(v^j)$.}
By definition $\phi^j\in W^{1,2}(S;\mathbb{R}^2)$,  $\phi^j$ is orthogonal to $M_{\epsilon_j}$  in the sense of $W^{1,2}$, and \begin{equation}\label{estcurvgrad}\|\sym(\nnp)\|_{L^2(S)}<\frac{\epsilon_j}{j}\end{equation} for every $j$. Since $\|\phi^j\|_{W^{1,2}}=1$ for every $j$, there exists $\phi\in W^{1,2}(S;\mathbb{R}^2)$ such that, up to subsequences, $\phi^j \deb \phi$ in $W^{1,2}(S;\mathbb{R}^2)$. 

Let now $u\in M_0$. Then, there exists a sequence $(u^j)$ such that $u^j \in M_{\epsilon_j}$ for all $j \in \mathbb{N}$ and $u^j \ten u$ in $W^{1,2}(S;\mathbb{R}^2)$. Indeed, from Lemma \ref{struttmel}, it follows that $$u=\Big(\begin{array}{c}\alpha_2\\\alpha_3\end{array}\Big)+\alpha_1\Big(\begin{array}{c}-\gamma_3\\\gamma_2\end{array}\Big)$$ for some $\alpha_1,\alpha_2,\alpha_3\in\mathbb{R}$. The maps $u^j$ given by 
$$u^j:=u-\epsilon_j t \delta \tg $$ have the required properties. Since $\scal{\phi^j}{u^j}_{W^{1,2}}=0$ for any $j\in \mathbb{N}$, passing to the limit, we obtain that $\scal{\phi}{u}_{W^{1,2}}=0$ for every $u\in M_0$; hence $\phi$ is orthogonal to $M_0$ in the sense of $W^{1,2}$. 

To deduce a contradiction we shall prove that the convergence of $(\phi^j)$ is actually strong in $W^{1,2}(S;\mathbb{R}^2)$ and $\phi \in M_0$.\\
Indeed, since from \eqref{estcurvgrad} 
\begin{equation}
\label{coneen}
\sym(\nnp)_{11}\ten 0,\quad \epsilon_j\sym(\nnp)_{12}\ten 0\quad \text{ and }\quad
 \sym(\nnp)_{22}\ten 0 
 \end{equation} strongly in $L^2(S)$, it is immediate to see that 
\begin{equation}
\label{strcon}
      \partial_s \phi^j \cdot \tg \ten 0, \quad \partial_t \phi^j\cdot \tg\ten 0, \quad \text{ and } \frac{1}{\epsilon_j}\partial_t \phi^j\cdot \n \ten 0
\end{equation}
 strongly in $L^2(S)$. To show the strong convergence of $\phi^j$ in $W^{1,2}(S;\mathbb{R}^2)$, it remains to prove that $\partial_s \phi^j \cdot \n \ten \partial_s \phi \cdot \n$ strongly in $L^2(S)$. By Lemma  \ref{lio} and by the closed graph Theorem, it is enough to prove that 
$$\partial_s \phi^j \cdot \n \ten \partial_s \phi \cdot \n\quad\text{ and }\quad
\nabla(\partial_s \phi^j \cdot \n)\ten \nabla(\partial_s \phi \cdot \n)$$ strongly in $W^{-1,2}.$ Convergence of $(\partial_s \phi^j \cdot \n)$ is immediate as $(\phi^j)$ is strongly converging in $L^2(S;\mathbb{R}^2)$. Strong convergence in $W^{-1,2}(S;\mathbb{R}^2)$ of $(\partial_t\partial_s \phi^j \cdot \n)$ follows from the identity $$\partial_t\partial_s \phi^j \cdot \n=\partial_s(\partial_t \phi^j \cdot \n)+k\partial_t \phi^j \cdot \tg$$
and  from \eqref{strcon}. To prove convergence of $(\partial_s (\partial_s \phi^j \cdot \n))$ we notice that, by \eqref{estcurvgrad},
\begin{equation}
\label{pdte}
\frac{1}{\epsilon_j}\|\partial_t(\sym(\nnp)_{11})\|_{W^{-1,2}(S)}\leq \frac{1}{j}
\end{equation}
for all $j \in \mathbb{N}$ and by \eqref{coneen}, \mbox{$\partial_s(\sym(\nnp)_{12})\ten 0$} strongly in $W^{-1,2}(S)$.
Furthermore,
\begin{eqnarray}
\nonumber \frac{1}{\epsilon_j}\partial_t (\sym(\nnp)_{11})&=&\frac{\partial_t\partial_s \phi^j \cdot \tg}{\epsilon_j(1-\epsilon_j t k)}+\frac{k(\partial_s \phi^j\cdot \tg)}{1-\epsilon_j t k}\\
\nonumber &\hspace{-3.5cm}=&\hspace{-2cm}\frac{2\partial_s (\sym(\nnp)_{12})}{1-\epsilon_j t k}-\frac{k}{1-\epsilon_jtk}\frac{\partial_t \phi^j \cdot \n}{\epsilon_j}\\
\nonumber &\hspace{-3.5cm}-&\hspace{-2cm}\frac{1}{1-\epsilon_j t k}\partial_s\Big(\frac{\partial_s \phi^j \cdot \n}{1-\epsilon_j t k}\Big)
+\frac{k(\partial_s \phi^j\cdot \tg)}{1-\epsilon_j t k}.
\end{eqnarray}
Then, using \eqref{strcon} and \eqref{pdte}, we deduce \begin{equation}\label{fincondm0}\partial_s(\partial_s \phi^j \cdot \n)\ten 0\text{ strongly in }W^{-1,2}(S).\end {equation}

It follows that $\phi^j\ten \phi$ strongly in $W^{1,2}(S;\mathbb{R}^2)$ and, since $\|\phi^j\|_{W^{1,2}}=1$ for any $j \in \mathbb{N}$, also $\|\phi\|_{W^{1,2}}=1.$ On the other hand, by \eqref{strcon} and \eqref{fincondm0}, $\phi \in M_0$. Since $\phi$ is orthogonal to $M_0$ in the sense of $W^{1,2}$, then $\phi$ must be identically equal to zero. This gives a contradiction and completes the proof. 
\end{proof}
 We conclude this section by proving a technical lemma. We recall that $\omega=(0,L)\times(0,1).$
\begin{lem}
\label{conv2e}
Let $(\alpha^h_i)\subset W^{-2,2}(0,L)$, $i=1,2,3,$ and let $f\in W^{-2,2}(\omega)$ be such that
\begin{equation}
\label{hpah}
\alpha^h_1 N+\alpha_2^h \tau_2+\alpha^h_3\tau_3 \deb f
\end{equation}
weakly in $W^{-2,2}(\omega)$, as $h\ten 0$. Then, there exist $\alpha_i\in W^{-2,2}(0,L)$, $i=1,2,3$, such that for every $i$
\begin{equation}
\label{cvah}
\alpha^h_i\deb \alpha_i \end{equation}
weakly in $W^{-2,2}(0,L)$, as $h\ten 0$, and
\begin{equation}
\label{rapf}
f=\alpha_1 N+\alpha_2 \tau_2+\alpha_3 \tau_3.
\end{equation}
If, in addition, there exists $g\in L^2(\omega)$ such that $f=\partial_s g$, then $\alpha_i\in L^2(0,L)$ for any $i=1,2,3$. If $f=0$, then $\alpha_i=0$ for any $i=1,2,3$.
\end{lem}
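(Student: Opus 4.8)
The plan is to reconstruct the three scalar sequences $(\alpha^h_i)$ from the single combination $\Sigma^h:=\alpha^h_1 N+\alpha^h_2\tau_2+\alpha^h_3\tau_3$ by a separation--of--variables argument at the level of distributional dualities, and then to read off all the assertions from this. The point that makes the reconstruction possible is that the functions $N,\tau_2,\tau_3$ are \emph{linearly independent} on $[0,1]$. Indeed, writing $N=\gamma_3\tau_2-\gamma_2\tau_3$, a relation $c_1 N+c_2\tau_2+c_3\tau_3\equiv 0$ with $c_i\in\mathbb R$ is equivalent to $P\tau_2+Q\tau_3\equiv 0$ with $P:=c_1\gamma_3+c_2$ and $Q:=c_3-c_1\gamma_2$; differentiating once and using $\dot\gamma=\tau$, $\dot\tau_2=-k\tau_3$, $\dot\tau_3=k\tau_2$ and $\tau_2^2+\tau_3^2=1$ yields $k\,(Q\tau_2-P\tau_3)\equiv 0$, so on the nonempty open set $\{k\neq 0\}$ the pair $(P,Q)$ is orthogonal to both vectors of the orthonormal basis $(\tau_2,\tau_3),(-\tau_3,\tau_2)$ of $\mathbb R^2$, hence $P\equiv Q\equiv 0$ there; differentiating $P,Q$ again on that set and using $\tau_2^2+\tau_3^2=1$ once more forces $c_1\equiv 0$, and then $c_2=c_3=0$. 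Consequently the linear map $(c_1,c_2,c_3)\mapsto c_1N+c_2\tau_2+c_3\tau_3$ is injective, so I may fix $\varphi_1,\varphi_2,\varphi_3\in C^\infty_0(0,1)$ such that the $3\times3$ matrix $M$ with entries $M_{i1}:=\myintt{N\varphi_i}$, $M_{i2}:=\myintt{\tau_2\varphi_i}$, $M_{i3}:=\myintt{\tau_3\varphi_i}$ is invertible.

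The second step is the duality argument. For $\rho\in C^\infty_0(0,L)$ the product $\rho(x_1)\varphi_i(s)$ belongs to $W^{2,2}_0(\omega)$ with $\|\rho\,\varphi_i\|_{W^{2,2}(\omega)}\leq c_i\|\rho\|_{W^{2,2}_0(0,L)}$, and since the pairing of $\Sigma^h$ with $\rho\,\varphi_i$ factorises over the two variables,
$$\scal{\Sigma^h}{\rho\,\varphi_i}=\sum_{j=1}^3 M_{ij}\,\scal{\alpha^h_j}{\rho}\qquad(i=1,2,3).$$
A weakly convergent sequence in $W^{-2,2}(\omega)$ is bounded, so inverting $M$ in this identity gives $\sup_h\|\alpha^h_j\|_{W^{-2,2}(0,L)}<+\infty$ for $j=1,2,3$; moreover, by \eqref{hpah}, $\scal{\alpha^h_j}{\rho}=\sum_i(M^{-1})_{ji}\scal{\Sigma^h}{\rho\,\varphi_i}\to\sum_i(M^{-1})_{ji}\scal{f}{\rho\,\varphi_i}$ for every $\rho\in C^\infty_0(0,L)$. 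The right-hand side, as a functional of $\rho$, is bounded by $C\|f\|_{W^{-2,2}(\omega)}\|\rho\|_{W^{2,2}_0(0,L)}$, hence defines an element $\alpha_j\in W^{-2,2}(0,L)$; together with the uniform bound and the density of $C^\infty_0(0,L)$ in $W^{2,2}_0(0,L)$ this gives \eqref{cvah}. Then \eqref{rapf} follows by pairing $\alpha_1N+\alpha_2\tau_2+\alpha_3\tau_3-f$ with test functions of the form $\rho(x_1)\varphi(s)$, $\rho\in C^\infty_0(0,L)$, $\varphi\in C^\infty_0(0,1)$, passing to the limit by \eqref{cvah} and \eqref{hpah}, and recalling that finite sums of such products are dense in $W^{2,2}_0(\omega)$.

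For the two additional claims: if $f=\partial_s g$ with $g\in L^2(\omega)$, then integration by parts in $s$ gives $\scal{f}{\rho\,\varphi_i}=-\int_{\omega}g(x_1,s)\,\rho(x_1)\,\dot\varphi_i(s)\,dx_1\,ds$, whence $|\scal{\alpha_j}{\rho}|\leq C\,\|g\|_{L^2(\omega)}\|\rho\|_{L^2(0,L)}$ for all $\rho\in C^\infty_0(0,L)$; since $C^\infty_0(0,L)$ is dense in $L^2(0,L)$ this forces $\alpha_j\in L^2(0,L)$. If $f=0$, then \eqref{rapf} reads $\alpha_1N+\alpha_2\tau_2+\alpha_3\tau_3=0$, and testing against $\rho\,\varphi_i$ and inverting $M$ gives $\scal{\alpha_j}{\rho}=0$ for all $\rho\in C^\infty_0(0,L)$, i.e. $\alpha_j=0$.

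I expect the linear independence of $N,\tau_2,\tau_3$ to be the only genuinely delicate point: it is the one place where the hypothesis $k\not\equiv0$ and the planar geometry of $\gamma$ enter, while the rest of the argument is a soft inversion of the map $(\alpha^h_i)\mapsto\Sigma^h$ through a well-chosen finite family of test functions.
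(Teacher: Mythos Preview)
Your proof is correct and takes a genuinely different route from the paper's. The paper does not invoke the linear independence of $N,\tau_2,\tau_3$ directly; instead it ``differentiates in $s$'' at the level of pairings: using the observation that $\scal{\alpha^h_i}{\varphi\,\partial_s^j\psi}=0$ for any $\psi\in C^{j+2}_0(0,1)$, it transfers $s$--derivatives onto the geometric coefficients $N,\tau_2,\tau_3$ via $\dot N=-kT$, $\dot\tau_2=-k\tau_3$, $\dot\tau_3=k\tau_2$, and iterates this until an identity of the form $\scal{\alpha^h_1}{\varphi\,k^2\phi}\to\scal{f}{\varphi\,\Theta(\phi)}$ is reached; normalising by a fixed $\bar\phi$ with $\int_0^1 k^2\bar\phi=1$ then yields the weak convergence of $(\alpha^h_1)$, and $\alpha^h_2,\alpha^h_3$ are recovered similarly via $\hat\phi$ with $\int_0^1 k\hat\phi=1$. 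Your approach is cleaner and more conceptual: you identify from the start that the heart of the matter is the injectivity of $(c_1,c_2,c_3)\mapsto c_1N+c_2\tau_2+c_3\tau_3$ (your proof of which is where the hypothesis $k\not\equiv 0$ enters, exactly as in the paper), fix three test functions $\varphi_1,\varphi_2,\varphi_3\in C^\infty_0(0,1)$ making the moment matrix invertible, and read off each $\alpha^h_j$ by a single $3\times 3$ inversion. The paper's computation has the incidental byproduct of explicit formulae for the $\alpha_i$ in terms of $f$ and the geometry of $\gamma$, but for the purposes of the lemma your argument is shorter and exposes the structure more transparently.
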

\begin{proof}
For the sake of notation, throughout the proof we use the symbol $\scal{\cdot \,}{\cdot}$ to denote the duality pairing between $W^{-2,2}(\omega)$ and $W^{2,2}_0(\omega)$.

We recall that any $\alpha\in W^{-2,2}(0,L)$ can be identified with an element of the space \mbox{$W^{-2,2}(\omega)$} by setting
\begin{equation}
\label{pass00}
\scal{\alpha}{\delta}:=\myintt{\scal{\alpha}{\delta(s,\cdot)}_{W^{-2,2}(0,L),W^{2,2}_0(0,L)}}
\end{equation}  
for any $\delta\in C^{\infty}_0(\omega)$ and extending it by density to  $W^{2,2}_0(\omega)$. Moreover,
for any $\alpha\in W^{-2,2}(0,L)$ and $\beta\in C(0,1)$, we define the product $\alpha\beta$ as
 $$\scal{\alpha\beta}{\delta}:=\scal{\alpha}{\beta\delta}=\myintt{\scal{\alpha}{\delta(s,\cdot)}_{W^{-2,2}(0,L),W^{2,2}_0(0,L)}\beta(s)}$$
 for every $\delta\in C^{\infty}_0(\omega)$.
 
 Let now $\varphi\in W^{2,2}_0(0,L)$ and $\psi\in C^{j+2}_0(0,1)$, with $j\in\mathbb{N}$.
We claim that
\begin{equation}
\label{claima}
\scal{\alpha^h_i}{\varphi\partial_s^j\psi}=0.
\end{equation}
Indeed, let $(\varphi^l)\subset C^{\infty}_0(0,L)$ be such that $\varphi^l\ten \varphi$ in $W^{2,2}(0,L)$. Then, 
$$\scal{\alpha^h_i}{\varphi\partial_s^j\psi}=\lim_{l\ten+\infty}\scal{\alpha^h_i}{\varphi^l\partial_s^j\psi}.$$
On the other hand, 
$$\scal{\alpha^h_i}{\varphi^l\partial_s^j\psi}=\myintt{\partial_s\scal{\alpha^h_i}{\varphi^l \partial_s^{j-1}\psi}_{W^{-2,2}(0,L),W^{2,2}_0(0,L)}}=0$$
for any $l\in\mathbb{N}$. Therefore, claim \eqref{claima} is proved.

By \eqref{hpah}, for any $\varphi\in W^{2,2}(0,L)$, $\psi\in C^{j+2}_0(0,1)$, we have
$$\scal{\alpha^h_1 N}{\varphi\partial_s^j \psi}+\sum_{i=2,3}{\scal{\alpha^h_i\tau_i}{\varphi\partial_s^j\psi}}\ten \scal{f}{\varphi\partial_s^j\psi}.$$
Claim \eqref{claima} yields then
\begin{equation}
\label{pass}
\scal{\alpha^h_1 kT+\alpha^h_2 k\tau_3-\alpha^h_3 k\tau_2}{\varphi\partial_s^{j-1}\psi}\ten \scal{f}{\varphi\partial_s^j\psi}.
\end{equation}
Hence, choosing $j=1$ we obtain
\begin{equation}
\label{E2}
\scal{\alpha^h_1 kT+\alpha^h_2 k\tau_3-\alpha^h_3 k\tau_2}{\varphi\psi}\ten \scal{f}{\varphi\partial_s\psi}
\end{equation}
for any $\varphi\in W^{2,2}_0(0,L)$ and $\psi\in C^{3}_0(0,1)$.

Let now $\varphi\in W^{2,2}_0(0,L)$, $\psi\in C^{j+3}_0(0,1)$. Taking $\varphi\partial_s^j \psi$ as test function in \eqref{E2} and applying again \eqref{claima}, we deduce
\begin{equation}
\nonumber
\scal{-\alpha^h_1(\dot{k}T+k+k^2N)-\alpha^h_2(\dot{k}\tau_3+k^2\tau_2)+\alpha^h_3(\dot{k}\tau_2-k^2\tau_3)}{\varphi\partial_s^{j-1}\psi}\ten\scal{f}{\varphi\partial_s^{j+1}\psi},
\end{equation} 
which in turn gives
\begin{equation}
\label{E3}
\scal{-\alpha^h_1(\dot{k}T+k+k^2N)-\alpha^h_2(\dot{k}\tau_3+k^2\tau_2)+\alpha^h_3(\dot{k}\tau_2-k^2\tau_3)}{\varphi\psi}\ten\scal{f}{\varphi\partial_s^{2}\psi},
\end{equation} 
for any $\varphi\in W^{2,2}(0,L)$, $\psi\in C^{4}_0(0,1)$.

Consider $\phi\in C^{\infty}_0(0,1)$. By regularity of the curve $\gamma$, the map $k\phi\in C^4_0(0,1)$. Therefore, for any $\varphi\in W^{2,2}_0(0,L)$ we can choose $\varphi k\phi$ as test function in \eqref{E3} obtaining
$$\scal{-\alpha^h_1(k\dot{k}T+k^2+k^3N)-\alpha^h_2(k\dot{k}\tau_3+k^3\tau_2)+\alpha^h_3(k\dot{k}\tau_2-k^3\tau_3)}{\varphi\phi}\ten\scal{f}{\varphi\partial_s^{2}(k\phi)}.$$
On the other hand, by \eqref{hpah}
$$\scal{\alpha^h_1N+\sum_{1=2,3}{\alpha^h_i\tau_i}}{\varphi k^3\phi}\ten\scal{f}{\varphi k^3\phi},$$
and by \eqref{E2}
$$\scal{\alpha^h_1 kT+\alpha^h_2 k\tau_3-\alpha^h_3 k\tau_2}{\varphi\dot{k}\phi}\ten\scal{f}{\varphi\partial_s(\dot{k}\phi)}.$$
Collecting the previous remark we deduce
\begin{equation}
\label{pass1}
\scal{\alpha^h_1}{\varphi k^2\phi}\ten\scal{f}{\varphi(-\partial_s^2(k\phi)-k^3\phi-\partial_s(\dot{k}\phi))}
\end{equation}
for any $\varphi\in W^{2,2}(0,L)$, $\phi\in C^{\infty}_0(0,1)$.

Let now $\lin{\phi}\in C^{\infty}_0(0,1)$ be such that $\myintt{k^2\lin{\phi}}=1$ (such $\lin{\phi}$ exists because $k$ is not identically equal to zero in $(0,1)$). Convergence \eqref{pass1} implies that 
\begin{equation}
\label{pass25}
\alpha^h_1\deb\alpha_1\text{ weakly in }W^{-2,2}(0,L),
\end{equation}  
where
\begin{equation}
\label{ae1}
\scal{\alpha_1}{\varphi}_{W^{-2,2}(0,L),W^{2,2}_0(0,L)}=\scal{f}{\varphi(-\partial_s^2(k\lin{\phi})-k^3\lin{\phi}-\partial_s(\dot{k}\lin{\phi}))}
\end{equation}
for every $\varphi\in W^{2,2}_0(0,L)$. By definition \eqref{pass00} it is immediate to see that, identifying $\alpha^h_1,\alpha_1$ with elements of $W^{-2,2}(\st)$, we also have
\begin{equation}
\label{ae2}
\alpha^h_1\deb \alpha_1
\end{equation}
weakly in $W^{-2,2}(\st)$.

Let again $\varphi\in W^{2,2}_0(0,L)$, $\phi\in C^{\infty}_0(0,1)$. Taking $\varphi k \tau_2 \phi$ and $\varphi\tau_3\phi$ as test function respectively in $\eqref{hpah}$ and \eqref{E2} we deduce
\begin{equation}
\label{pass3}
\scal{\alpha^h_1 N+\alpha_2^h \tau_2+\alpha^h_3\tau_3}{\varphi k \tau_2\phi}\ten \scal{f}{\varphi k \tau_2\phi}
\end{equation}
and
\begin{equation}
\label{pass4}
\scal{\alpha^h_1 kT+\alpha^h_2 k\tau_3-\alpha^h_3 k\tau_2}{\varphi \tau_3\phi}\ten \scal{f}{\varphi\partial_s(\tau_3\phi)}.
\end{equation}
Summing \eqref{pass3} and \eqref{pass4} and using \eqref{ae2}, we obtain
$$\scal{\alpha^h_2}{k\phi \varphi}\ten \scal{f}{\varphi(k\tau_2\phi+\partial_s(\tau_3\phi))}-\scal{\alpha_1}{\varphi k \gamma_3\phi}$$
for any $\varphi\in W^{2,2}_0(0,L)$ and for any $\phi\in C^{\infty}_0(0,1)$.

Choosing $\capp{\phi}$ such that $\myintt{k\capp{\phi}}=1$ and arguing as in the proof of \eqref{pass25}, we deduce that 
\begin{equation}
\label{pass5}
\alpha^h_2\deb\alpha_2 \text{ weakly in }W^{-2,2}(0,L),
\end{equation}
where 
\begin{equation}
\label{ae3}
\scal{\alpha_2}{\varphi}_{W^{-2,2}(0,L),W^{2,2}(0,L)}= \scal{f}{\varphi(k\tau_2\capp{\phi}+\partial_s(\tau_3\capp{\phi}))}-\scal{\alpha_1}{\varphi k \gamma_3 \capp{\phi}}
\end{equation}
for any $\varphi\in W^{2,2}_0(0,L)$.

Similarly, one can prove that 
$$\alpha^h_3\deb\alpha_3 \text{ weakly in }W^{-2,2}(0,L)$$
where 
\begin{equation}
\label{ae4}
\scal{\alpha_3}{\varphi}_{W^{-2,2}(0,L), W^{2,2}_0(0,L)}=\scal{f}{\varphi( k\tau_3\capp{\phi}-\partial_s(\tau_2\capp{\phi}))}+\scal{\alpha_1}{\varphi k \gamma_2 \capp{\phi}}
\end{equation}
for any $\varphi\in W^{2,2}_0(0,L)$.

Combining \eqref{hpah}, \eqref{ae2}, \eqref{pass5}, and \eqref{ae4}, we obtain the representation \eqref{rapf}.

If $f=\partial_s g$, with $g\in L^2(\omega)$, then by \eqref{ae1}
$$\scal{\alpha_1}{\varphi}_{W^{-2,2}(0,L), W^{2,2}_0(0,L)}=\myintss{g\partial_s(\partial_s^2(k\lin{\phi})+k^3\lin{\phi}+\partial_s(\dot{k}\lin{\phi}))\varphi},$$
for any $\varphi\in W^{2,2}_0(0,L)$. This implies that $\alpha_1\in L^2(0,L)$. Similarly
equalities \eqref{ae3} and \eqref{ae4} yield $\alpha_2,\alpha_3\in L^2(0,L)$. 

Finally, if $f=0$, by \eqref{ae1}, \eqref{ae3} and \eqref{ae4} we deduce immediately that $\alpha_i=0$ for every $i$.\end{proof}
\section{Limit classes of displacements and bending moments and approximation results}
\label{limit}
\noindent In this section we introduce some classes of displacements and bending moments, that will emerge in the limit models, and we discuss their properties and their approximation by means of smooth functions.  

We begin by introducing the limit class of the tangential derivatives of the tangential displacements
\begin{multline}
\cal{G}:=\Big\{g\in L^2(\omega): \exists (v^{\epsilon})\subset C^5(\lin{\omega};\mathbb{R}^3) \text{ such that }
\partial_s v_1^{\epsilon}+\partial_1 v^{\epsilon} \cdot \tau=0,\\ \label{classg} \partial_s v^{\epsilon} \cdot \tau=0\text{ for every }\epsilon>0 \text{ and }  g=\lim_{\epsilon \ten 0}{\partial_1 v_1^{\epsilon}}\Big\},
\end{multline}
where the limit is intended with respect to the strong convergence of $L^2(\omega)$. 
In other words, if for every $v\in W^{1,2}(\omega;\mathbb{R}^3)$ we consider the symmetric gradient $e(v)\in L^2(\omega;\mathbb{M}^{2\times 2}_{\sym})$ of $v$, defined by
\begin{equation}
\label{syg}
e(v):=\left(\begin{array}{cc}\partial_1 v_1&\frac{1}{2}(\partial_s v_1+\partial_1 v\cdot \tau)\\\frac{1}{2}(\partial_s v_1+\partial_1 v\cdot \tau)&\partial_s v\cdot \tau\end{array}\right),
\end{equation}
a function $g\in L^2(\omega)$ belongs to $\cal{G}$ if and only if there exists a sequence $(v^{\epsilon})\subset C^5(\lin{\omega};\mathbb{R}^3)$ such that
$$e(v^{\epsilon})=\left(\begin{array}{cc}\partial_1 v^{\epsilon}_1&0\\0&0\end{array}\right)\ten \left(\begin{array}{cc}g&0\\0&0\end{array}\right)$$
strongly in $L^2(\omega;\mathbb{M}^{2\times 2}_{\sym})$ as $\epsilon \ten 0$.
\begin{lem}
\label{scriverelemma}
Let $g\in L^2(\omega)$ and assume there exists a sequence $(v^{\epsilon})\subset W^{1,2}(\omega;\mathbb{R}^3)$ such that
\begin{equation}
\label{cevh}
e(v^{\epsilon})\deb \Big(\begin{array}{cc}g&0\\0&0\end{array}\Big)
\end{equation}
weakly in $L^2(\omega;\mathbb{M}^{2\times 2}_{\sym})$ as $\epsilon \ten 0$. Then $g\in \cal{G}$.
\end{lem}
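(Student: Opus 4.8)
The plan is to argue by duality in the Hilbert space $L^2(\omega)$. The first point is that $\cal{G}$ is an $L^2$-closed \emph{linear} subspace of $L^2(\omega)$: the constraints $\partial_s v_1^{\epsilon}+\partial_1 v^{\epsilon}\cdot\tau=0$, $\partial_s v^{\epsilon}\cdot\tau=0$ in \eqref{classg} are linear, so
\[
\cal{S}:=\big\{\partial_1 v_1:\ v\in C^5(\lin{\omega};\mathbb{R}^3),\ \partial_s v_1+\partial_1 v\cdot\tau=0,\ \partial_s v\cdot\tau=0\big\}
\]
is a linear subspace of $L^2(\omega)$, and $\cal{G}$ is exactly its $L^2$-closure. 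Hence $\cal{G}=(\cal{G}^{\perp})^{\perp}$, and it suffices to show that $\int_{\omega} g\,\zeta=0$ for every $\zeta\in\cal{G}^{\perp}$.

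Second, I would make $\cal{G}^{\perp}$ explicit. Writing a generic admissible field as $v=v_1e_1+(v\cdot\tau)\,\tau+(v\cdot n)\,n$ and integrating the two constraints successively in $s$, one sees that $\partial_1 v_1$ runs exactly over the functions $\varphi'(x_1)-\psi''(x_1)\,s-\int_0^s\!\int_0^{\sigma}k(\sigma')\,\partial_1^2 c(x_1,\sigma')\,d\sigma'\,d\sigma$, with $\varphi,\psi$ smooth functions of $x_1$ and $c$ smooth on $\lin{\omega}$. Imposing $\int_{\omega}\zeta\,\partial_1 v_1=0$ for all these fields — and integrating by parts twice in $s$ in the last term — forces $\int_0^1\zeta(x_1,\cdot)\,ds=\int_0^1 s\,\zeta(x_1,\cdot)\,ds=0$ for a.e.\ $x_1$ and $k(s)\,B(x_1,s)=0$ for a.e.\ $(x_1,s)$, where $B(x_1,s):=\int_s^1\!\int_{\sigma}^1\zeta(x_1,\rho)\,d\rho\,d\sigma$; equivalently, $\zeta=\partial_s^2 B$ with $B(x_1,\cdot)\in W^{2,2}_0(0,1)$ supported, for a.e.\ $x_1$, in the interior of $\{k=0\}$. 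A routine mollification, cut-off and density argument in both variables then shows that the $\zeta=\partial_s^2 B$ with $B\in C^{\infty}_0(\omega)$ and $\mathrm{supp}_s\,B(x_1,\cdot)\subseteq\{k=0\}^{\circ}$ are dense in $\cal{G}^{\perp}$, so it is enough to test against those.

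Third — the computational core — I would fix such a $\zeta=\partial_s^2 B$ and use the hypothesis. From $e(v^{\epsilon})_{11}=\partial_1 v^{\epsilon}_1$, $2e(v^{\epsilon})_{12}=\partial_s v^{\epsilon}_1+\partial_1(v^{\epsilon}\cdot\tau)$ and $e(v^{\epsilon})_{22}=\partial_s(v^{\epsilon}\cdot\tau)-k\,(v^{\epsilon}\cdot n)$ (recall $\dot\tau=kn$) one gets the distributional identity $\partial_s^2 v^{\epsilon}_1=2\partial_s e(v^{\epsilon})_{12}-\partial_1 e(v^{\epsilon})_{22}-k\,\partial_1(v^{\epsilon}\cdot n)$. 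Testing $\partial_1 v^{\epsilon}_1$ against $\partial_s^2 B$, using this identity and integrating by parts so that all derivatives fall on $B$, one obtains
\[
\int_{\omega}\partial_1 v^{\epsilon}_1\,\partial_s^2 B=2\int_{\omega}e(v^{\epsilon})_{12}\,\partial_1\partial_s B-\int_{\omega}e(v^{\epsilon})_{22}\,\partial_1^2 B-\int_{\omega}(v^{\epsilon}\cdot n)\,k\,\partial_1^2 B.
\]
The last integral vanishes identically, because $\partial_1^2 B$ has the same $s$-support as $B$, on which $k\equiv0$ — this is precisely where the structure of $\cal{G}^{\perp}$ enters, and it is what renders harmless the otherwise uncontrolled term $k\,\partial_1(v^{\epsilon}\cdot n)$. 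The first two integrals tend to $0$, since $e(v^{\epsilon})_{12},e(v^{\epsilon})_{22}\deb 0$ weakly in $L^2(\omega)$ while $\partial_1\partial_s B,\partial_1^2 B$ are fixed $L^2$-functions; and $e(v^{\epsilon})_{11}\deb g$ gives $\int_{\omega}\partial_1 v^{\epsilon}_1\,\partial_s^2 B\ten\int_{\omega}g\,\partial_s^2 B$. Therefore $\int_{\omega}g\,\zeta=0$ for every $\zeta$ in a dense subset of $\cal{G}^{\perp}$, hence for every $\zeta\in\cal{G}^{\perp}$, i.e.\ $g\in(\cal{G}^{\perp})^{\perp}=\cal{G}$.

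The step I expect to be the main obstacle is the second one: the explicit identification of $\cal{G}^{\perp}$ — the recognition that orthogonality to every tangential derivative $\partial_1 v_1$ of a smooth field $v$ with $e(v)_{12}=e(v)_{22}=0$ forces a test function to be a second tangential derivative of something supported on $\{k=0\}$. Everything after that is a density argument together with the integration by parts of the third step. (A direct construction of a recovery sequence — keeping $v^{\epsilon}_1$ and correcting the cross-sectional components so as to annihilate $e(\cdot)_{12}$ and $e(\cdot)_{22}$ by dividing by $k$ — works when $k$ never vanishes but fails in general, which is what motivates the duality route; notably the latter needs neither an assumption on the geometry of $\{k=0\}$ nor an a priori bound on $(v^{\epsilon})$ itself, only the weak convergence of the strains $e(v^{\epsilon})$.)
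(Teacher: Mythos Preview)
Your duality argument is a genuinely different route from the paper's proof, and it works. The paper proceeds constructively: starting from the weak convergence \eqref{cevh}, it first applies Mazur's lemma to upgrade to strong convergence of convex combinations, then introduces an $x_1$-antiderivative $u^\epsilon$ of $v_1^\epsilon$, extends everything to a slightly larger rectangle, smooths the normal component $v^\epsilon\cdot n$ directly, rebuilds the tangential component and the antiderivative by solving the two constraint ODEs in $s$, and finally mollifies in $x_1$ and runs a diagonal argument. No orthogonal complement is ever mentioned. Your proof, by contrast, never constructs an approximating sequence for $g$ at all; it identifies $\cal{G}^\perp$ and shows directly, via the integration-by-parts identity, that $g$ annihilates it.

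One caveat: the step you call ``routine'' --- the density in $\cal{G}^\perp$ of those $\partial_s^2 B$ with $B\in C^\infty_0(\omega)$ and $\mathrm{supp}_s\,B\subset\{k=0\}^\circ$ --- deserves more care than that word suggests. From $kB=0$ and $B(x_1,\cdot)\in W^{2,2}_0(0,1)\hookrightarrow C^1$ one gets $B=0$ on $\overline{\{k\neq 0\}}$; to conclude that also $\partial_s B=0$ there one uses that $\overline{\{k\neq 0\}}$ has no isolated points (a short argument from continuity of $k$) together with the mean-value theorem, and then $\partial_s^2 B=0$ a.e.\ on $\overline{\{k\neq 0\}}$ follows from the standard fact that the weak derivative of a Sobolev function vanishes a.e.\ on its level sets. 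Only after this does one know that $B$ restricts to an element of $W^{2,2}_0(I_n)$ on each component $I_n$ of $\{k=0\}^\circ$, which is what makes the component-wise approximation by $C^\infty_0(I_n)$ functions legitimate. Alternatively --- and more simply --- you can sidestep the $s$-regularisation entirely: mollify and cut off $B$ only in the $x_1$ variable, which preserves $kB=0$ and already yields $\partial_1^j\partial_s^i B\in L^2(\omega)$ for $i\le 2$ and all $j$, exactly the regularity needed for every integration by parts in your third step.

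As for what each approach buys: the paper's construction is explicit and actually produces the approximating $C^5$ fields entering the definition of $\cal{G}$, which is reused later (Remark \ref{recuse} and the recovery sequences of Section \ref{construction}). Your duality argument is shorter once $\cal{G}^\perp$ is in hand and, as a byproduct, gives a characterisation of $\cal{G}$ valid with no assumption on the zero set of $k$, whereas the paper obtains an explicit description of $\cal{G}$ only under additional structural hypotheses on $\{k=0\}$ (see the remark following Remark \ref{recuse}).
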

\begin{proof}
Condition \eqref{cevh} can be rewritten as
\begin{eqnarray}
 \label{ec1}&\partial_1 v^{\epsilon}_1\deb g &\text{ weakly in }L^2(\omega),\\
\label{ec2}&\partial_s v^{\epsilon}_1+\partial_1 v^{\epsilon}\cdot \tau\deb 0 &\text{ weakly in }L^2(\omega),\\
\label{ec3}&\partial_s v^{\epsilon}\cdot \tau\deb 0&\text{ weakly in }L^2(\omega).
\end{eqnarray}
By Mazur's Lemma, we may assume that the convergence in \eqref{ec1}, \eqref{ec2} and \eqref{ec3} is strong in $L^2(\omega_{\delta})$. For every $\epsilon$, let $\til{u}^{\epsilon}\in W^{1,2}(\omega)$, with $\partial_1^2 \til{u}^{\epsilon}\in L^2(\omega)$, be such that $\partial_1 \til{u}^{\epsilon}=v^{\epsilon}_1$. By \eqref{ec2} and by Poincar\'e inequality 
\begin{equation}
\label{cevh2}
\partial_s \til{u}^{\epsilon}+v^{\epsilon}\cdot \tau-\myfintol{\partial_s \til{u}^{\epsilon}}-\myfintol{v^{\epsilon}\cdot \tau}\ten 0
\end{equation}
strongly in $L^2(\omega)$. Let now $\nu^{\epsilon}\in W^{1,2}(\omega)$ be such that $\partial_s \nu^{\epsilon}=v^{\epsilon}\cdot \tau$. Setting
$$u^{\epsilon}:=\til{u}^{\epsilon}-\myfintol{\til{u}^{\epsilon}}-\myfintol{\nu^{\epsilon}},$$
then $u^{\epsilon}\in W^{1,2}(\omega)$ with $\partial_1^2 u^{\epsilon}\in L^2(\omega)$ for every $\epsilon$, \eqref{cevh2} yields
\begin{equation}
\label{ec2bis}
\partial_s u^{\epsilon}+v^{\epsilon}\cdot \tau\ten 0
\end{equation}
strongly in $L^2(\omega)$ and by \eqref{ec1} there holds
\begin{equation}
\label{ec1bis}
\partial_1^2 u^{\epsilon}\ten g
\end{equation}
strongly in $L^2(\omega)$. 

We want to approximate ${u}^{\epsilon}$ and ${v}^{\epsilon}$ by smooth functions in such a way that \eqref{ec1bis} holds and the quantities in \eqref{ec3} and \eqref{ec2bis} are equal to zero for every $\epsilon>0$. To this purpose, we first extend ${u}^{\epsilon}$ and ${v}^{\epsilon}$ to the set $$\omega_{\delta}:=(-\delta,L+\delta)\times(0,1),$$ 
with $0<\delta<\frac{L}{3}$. For every $\epsilon$, we define
 \begin{equation}
 \nonumber
 \mathring{v}^{\epsilon}(x_1,s):=\begin{cases}
 {v}^{\epsilon}(x_1,s) &\text{ in }\omega,\\
 6{v}^{\epsilon}(-x_1,s)-8{v}^{\epsilon}(-2x_1,s)+3v^{\epsilon}(-3x_1,s) &\text{ in }(-\delta,0)\times(0,1),\\
 6{v}^{\epsilon}(2L-x_1,s)-8{v}^{\epsilon}(3L-2x_1,s)+3v^{\epsilon}(4L-3x_1,s)&\text{ in }(L,L+\delta)\times(0,1)\end{cases}
 \end{equation}
 and
 \begin{equation}
 \nonumber
 \mathring{u}^{\epsilon}(x_1,s):=\begin{cases}
 {u}^{\epsilon}(x_1,s) &\text{ in }\omega,\\
 6{u}^{\epsilon}(-x_1,s)-8{u}^{\epsilon}(-2x_1,s)+3u^{\epsilon}(-3x_1,s)&\text{ in }(-\delta,0)\times(0,1),\\
 6{u}^{\epsilon}(2L-x_1,s)-8{u}^{\epsilon}(3L-2x_1,s)+3u^{\epsilon}(4L-3x_1,s)&\text{ in }(L,L+\delta)\times(0,1).\end{cases}
 \end{equation}
  Clearly, $\mathring{v}^{\epsilon}$ and $\mathring{u}^{\epsilon}$ are extensions of ${v}^{\epsilon}$ and ${u}^{\epsilon}$, respectively, to $\omega_{\delta}$.
  Moreover, $\mathring{u}^{\epsilon}\in W^{1,2}(\omega_{\delta})$ with $\partial_1^2\mathring{u}^{\epsilon}\in L^2(\omega_{\delta})$, $\mathring{v}^{\epsilon}\in W^{1,2}(\omega_{\delta})$ and both \eqref{ec3} and \eqref{ec2bis} still hold in $\omega_{\delta}$.\\
   Furthermore, defining
    \begin{equation}
  \nonumber
  \mathring{g}:=\begin{cases}
{g}(x_1,s) &\text{ in }\omega,\\
 6{g}(-x_1,s)-32{g}(-2x_1,s)+27g(-3x_1,s)&\text{ in }(-\delta,0)\times(0,1),\\
 6{g}(2L-x_1,s)-32{g}(3L-2x_1,s)+27g(4L-3x_1,s)&\text{ in }(L,L+\delta)\times(0,1)\end{cases}
  \end{equation} 
  we have that $\mathring{g}\in L^2(\omega_{\delta})$, $\mathring{g}=g$ a.e. in $\omega$, and 
  \begin{equation}
\label{gc2}
 \partial_1^2\mathring{u}^{\epsilon}\ten \mathring{g}
 \end{equation}
   strongly in $L^2(\omega_{\delta})$. 
   
  We set $\mathring{v}^{\epsilon}_t:=\mathring{v}^{\epsilon}\cdot n$ and $\mathring{v}^{\epsilon}_s:=\mathring{v}^{\epsilon}\cdot \tau$. For every $\epsilon$, let $\lin{v}^{\epsilon}_t\in C^{\infty}(\lin{\omega}_{\delta})$ be such that 
 \begin{equation}
 \label{gd}
 \|\lin{v}^{\epsilon}_t-\mathring{v}^{\epsilon}_t\|_{W^{1,2}(\omega_{\delta})}\leq {C\epsilon}.
 \end{equation}
 Let now $\lin{v}^{\epsilon}_s\in C^5(\lin{\omega}_{\delta})$ be the solution of
 \begin{equation}
 \label{ge}
 \partial_s \lin{v}^{\epsilon}_s=k\lin{v}^{\epsilon}_t \quad \text{ in }\omega_{\delta},
 \end{equation}
 satisfying $\int_{0}^1{\lin{v}^{\epsilon}_s(x_1,s)ds}\in C^{\infty}(-\delta, L+\delta)$, with
 $$\int_{0}^1{\lin{v}^{\epsilon}_s(x_1,s)ds}- \int_0^1{\mathring{v}^{\epsilon}_s(x_1,s)ds}\ten 0$$
 strongly in $L^2(-\delta, L+\delta)$.
 By \eqref{ge} we deduce
 $$\|\partial_s(\lin{v}^{\epsilon}_s-\mathring{v}^{\epsilon}_s)\|_{L^2(\omega_{\delta})}\leq \|k(\lin{v}^{\epsilon}_t-\mathring{v}^{\epsilon}_t)\|_{L^2(\omega_{\delta})}+\|k\mathring{v}^{\epsilon}_t-\partial_s \mathring{v}^{\epsilon}_s\|_{L^2(\omega_{\delta})}.$$
 Hence, owing to \eqref{ec3} and \eqref{gd},
 \begin{equation}
 \label{gf}
 \|\partial_s(\lin{v}^{\epsilon}_s-\mathring{v}^{\epsilon}_s)\|_{L^2(\omega_{\delta})}\ten 0. 
 \end{equation}
 By Poincar\'e inequality we deduce 
 \begin{equation}
 \label{gf1}
 \|\lin{v}^{\epsilon}_s-\mathring{v}^{\epsilon}_s\|_{L^2(\omega_{\delta})}\ten 0.
 \end{equation}
 Finally, let $\lin{u}^{\epsilon}\in C^6(\lin{\omega}_{\delta})$ be such that
\begin{equation}
\label{gg}
\partial_s \lin{u}^{\epsilon}+\lin{v}^{\epsilon}_s=0\quad \text{ in }\omega_{\delta},
\end{equation}
with $\myintt{\lin{u}^{\epsilon}(x_1,s)}\in C^{\infty}(-\delta,L+\delta)$
and $$\myintt{\lin{u}^{\epsilon}(x_1,s)}-\myintt{\mathring{u}^{\epsilon}(x_1,s)}\ten 0$$ strongly in $L^2(-\delta, L+\delta)$. By \eqref{gg} we have
$$\|\partial_s(\lin{u}^{\epsilon}-\mathring{u}^{\epsilon})\|_{L^2(\omega_{\delta})}\leq \|\partial_s\mathring{u}^{\epsilon}+\mathring{v}^{\epsilon}_s\|_{L^2(\omega_{\delta})}+\|\mathring{v}^{\epsilon}_s-\lin{v}^{\epsilon}_s\|_{L^2(\omega_{\delta})}.$$
Therefore, by \eqref{ec2bis} and \eqref{gf1}, $$\partial_s (\lin{u}^{\epsilon}-\mathring{u}^{\epsilon})\ten 0$$ strongly in $L^2(\omega_{\delta})$. Hence, by Poincar\'e inequality 
\begin{equation}
\label{gh}
\lin{u}^{\epsilon}-\mathring{u}^{\epsilon}\ten 0
\end{equation} strongly in $L^2(\omega_{\delta})$.
 
 To have convergence of the second derivative in the $x_1$ variable of the sequence $(\lin{u}^{\epsilon})$, we regularize both $(\lin{u}^{\epsilon})$ and $(\lin{v}^{\epsilon})$ by mollification in the $x_1$ variable. Let $\rho\in C^{\infty}_0(-\lambda,\lambda)$ with $0<\lambda<\delta$. Defining
 \begin{numcases}{}
 \nonumber \capp{v}^{\epsilon}(x_1,s):=(\lin{v}^{\epsilon}(\cdot,s)\ast\rho)(x_1),&\\
 \nonumber \capp{u}^{\epsilon}(x_1,s):=(\lin{u}^{\epsilon}(\cdot,s)\ast\rho)(x_1),&
 \end{numcases}
 for a.e. $(x_1,s)\in \omega$ and for every $\epsilon>0$, we have $(\capp{v}^{\epsilon}_t)\subset C^{\infty}(\lin{\omega})$, $(\capp{v}^{\epsilon}_s)\subset C^5(\lin{\omega})$, and $(\capp{u}^{\epsilon})\subset C^6(\lin{\omega})$.
 By \eqref{ge} and \eqref{gg}, we deduce
 \begin{equation}
 \nonumber \partial_s \capp{v}^{\epsilon}_s=k\capp{v}^{\epsilon}_t\quad \text{ and }\quad\partial_s \capp{u}^{\epsilon}+\capp{v}^{\epsilon}_s=0.
 \end{equation}
 
 \noindent Moreover, by \eqref{gh},
 $$\partial_1^2(\capp{u}^{\epsilon}- (\mathring{u}^{\epsilon}(\cdot,s)\ast \rho))=(\lin{u}^{\epsilon}(\cdot,s)-\mathring{u}^{\epsilon}(\cdot,s))\ast\rho''\ten 0$$ strongly in $L^2(\omega)$ as $\epsilon\ten 0$. On the other hand, by \eqref{gc2},
 $$\partial_1^2(\mathring{u}^{\epsilon}(\cdot,s)\ast\rho)=\partial_1^2 \mathring{u}^{\epsilon}(\cdot,s)\ast \rho \ten \mathring{g}(\cdot,s)\ast\rho$$
 strongly in $L^2(\omega)$ as $\epsilon\ten 0$; hence $$\partial_1^2 \capp{u}^{\epsilon}\ten \mathring{g}(\cdot,s)\ast\rho$$ strongly in $L^2(\omega)$ as $\epsilon\ten 0$.
 
 The conclusion of the lemma follows considering a sequence of convolution kernels in the $x_1$ variable, and applying a diagonal argument.
\end{proof}

\begin{oss}
\label{recuse}
An equivalent characterization of $\cal{G}$ is the following:
\begin{multline}
\cal{G}=\Big\{g\in L^2(\omega):\exists (u^{\epsilon})\subset C^5(\lin{\omega}),(z^{\epsilon})\subset C^4(\lin{\omega})\text{ such that }\\
\label{classg1} \partial_s^2 u^{\epsilon}=kz^{\epsilon} \text{ for every }\epsilon>0\text{ and }
  g=\lim_{\epsilon\ten 0}\partial_1 u^{\epsilon}\Big\},
\end{multline}
where the limit is intended with respect to the strong convergence in $L^2(\omega)$.

Indeed, let $\cal{G}'$ be the class defined in the right-hand side of \eqref{classg1}. If $g\in\cal{G}$, setting $u^{\epsilon}=v^{\epsilon}_1$ and $z^{\epsilon}=-\partial_1 v^{\epsilon}\cdot n$ for every $\epsilon>0$, it is easy to check that $g\in \cal{G}'$.\\
Viceversa, if $g\in\cal{G}'$, it is enough to define $$v^{\epsilon}(x_1,s)=u^{\epsilon}(x_1,s)e_1-\int_0^{x_1}{(\partial_s u^{\epsilon}(\xi,s)\tau(s)+z^{\epsilon}(\xi,s)n(s))d\xi}$$
for every $(x_1,s)\in\omega$ and for every $\epsilon>0$. The conclusion follows then by Lemma \ref{scriverelemma}.
\end{oss}
\begin{oss}
The class $\cal{G}$ is always nonempty as it contains all functions $g\in L^2(\omega)$ which are affine with respect to $s$. Indeed, assume there exist $a_0,a_1\in L^2(0,L)$ such that $$g(x_1,s)=a_0(x_1)+s a_1(x_1)$$
for a.e. $(x_1,s)\in \omega$ and let $\capp{a}_i\in W^{1,2}(0,L)$ satisfying $\capp{a}'_i=a_i$, $i=0,1$. Then there exists $(\capp{a}^{\epsilon}_i)\subset C^{\infty}(0,L)$ such that $\capp{a}^{\epsilon}_i\ten \capp{a}_i$ strongly in $W^{1,2}(0,L)$ as $\epsilon\ten 0$, $i=0,1$ and setting
$$u^{\epsilon}(x_1,s):=\capp{a}^{\epsilon}_0(x_1)+s\capp{a}^{\epsilon}_1(x_1)$$
for every $(x_1,s)\in \omega$ and $z^{\epsilon}=0$ for every $\epsilon>0$, the claim follows by Remark \ref{recuse}.

 We also remark that if $g\in L^2(\omega)$ and there exist $\alpha_i\in L^2(0,L)$, $i=1,2,3$, such that 
 \begin{equation}
 \label{psg}
 \partial_s g=\alpha_1 N+\alpha_2\tau_2+\alpha_3\tau_3,
 \end{equation} then $g\in\cal{G}$. Indeed, by \eqref{psg} there exists $\alpha_4\in L^2(0,L)$ such that
 $$g=\alpha_1\int_0^s{N(\xi)d\xi}+\alpha_2\gamma_2+\alpha_3\gamma_3+\alpha_4.$$Let $\capp{\alpha}_i\in W^{1,2}(0,L)$ be such that $\capp{\alpha}_i'=\alpha_i$ for $i=1,2,3$. Then, setting
 $$u:=\capp{\alpha}_1\int_0^s{N(\xi)d\xi}+\capp{\alpha}_2\gamma_2+\capp{\alpha}_3\gamma_3+\capp{\alpha}_4,$$
 we have that $u\in W^{1,2}(\omega)$, $\partial_s^i u\in L^2(\omega)$ for $i=1,\cdots,6,$ and the map $z\in W^{1,2}(\omega)$, with $\partial_s^i z\in L^2(\omega)$ for $i=3,\cdots,5$, defined as $$z:=-\capp{\alpha}_1T-\capp{\alpha}_2\tau_3+\capp{\alpha}_3\tau_2,$$
 satisfies $\partial_s^2 u=kz$. For every $i=1,\cdots, 4$ there exists a sequence $(\alpha^{\epsilon}_i)\in C^{\infty}(0,L)$ such that $\alpha^{\epsilon}_i\ten \capp{\alpha}_i$ strongly in $W^{1,2}(0,L)$, as $\epsilon\ten 0$. Hence, defining
 $$u^{\epsilon}:={\alpha}^{\epsilon}_1\int_0^s{N(\xi)d\xi}+{\alpha}^{\epsilon}_2\gamma_2+{\alpha^{\epsilon}_3}\gamma_3+{\alpha^{\epsilon}_4},$$ 
 $$z^{\epsilon}:=-{\alpha^{\epsilon}_1}T-{\alpha^{\epsilon}_2}\tau_3+{\alpha^{\epsilon}_3}\tau_2,$$
 we have $\partial_1 u^{\epsilon}\ten g$ strongly in $L^2(\omega)$, $\partial_s^2 u^{\epsilon}=kz^{\epsilon}$ for every $\epsilon>0$, and both sequences $(u^{\epsilon})$ and $(z^{\epsilon})$ have the required regularity.
\end{oss}

\begin{oss}
The structure of the class $\cal{G}$ depends on the behaviour of the curvature $k$ of the curve $\gamma$.
 
For instance, if $k$ vanishes only at a finite number of points, then $\cal{G}=L^2(\omega)$. Indeed, let $0=p_0<p_1<\cdots<p_m=1$ be such that $k(s)\neq 0$ for every $s\in (p_i,p_{i+1})$, $i=0,\cdots,m-1$. For any $g\in L^2(\omega)$ there exists a sequence $(g^{\epsilon})\subset C^{\infty}_0\big((0,L)\times\bigcup_{i=0}^{m-1}(p_i,p_{i+1})\big)$ such that $g^{\epsilon}\ten g$ strongly in $L^2(\omega)$. Choosing $$u^{\epsilon}(x_1,s)=\int_0^{x_1}{g^{\epsilon}(\xi,s)d\xi}$$ for every $s\in (0,1)$, then $(u^{\epsilon})\subset C^{\infty}((0,L)\times(0,1))$ and for every $\epsilon>0$ there exists $\lambda^{\epsilon}>0$ such that 
$$2\lambda^{\epsilon}<\min_{i=0,\cdots,m-1}(p_{i+1}-p_i)$$ 
and $\partial_s^2 u^{\epsilon}=0$ in 
$$\bigcup_{i=0,\cdots,m-1}\big((p_i,p_i+\lambda^{\epsilon})\cup (p_{i+1}-\lambda^{\epsilon},p_{i+1})\big).$$
Setting 
$$z^{\epsilon}=\begin{cases}
\frac{\partial_s^2 u^{\epsilon}}{k}&\text{ in }(0,L)\times \bigcup_{i=0}^{m-1}{(p_i+\lambda^{\epsilon}, p_{i+1}-\lambda^{\epsilon})},\\
0&\text{ otherwise}
\end{cases}$$
we deduce immediately by Remark \ref{recuse} that $g\in\cal{G}$.

Assume instead that the sign of $k$ has the following behaviour: there exists a finite number of points $0=p_0<p_1<\cdots<p_m=1$ such that, for every $i=0,\cdots,m-1$, $k(s)>0$ for every $s\in (p_i,p_{i+1})$, or $k(s)<0$ for every $s\in (p_i,p_{i+1})$, or $k(s)=0$ for every $s\in (p_i,p_{i+1})$. In other words,
$$\{s\in [0,1]:k(s)=0\}=\bigcup_{i\in I_1}{[p_{i-1},p_i]}\cup\bigcup_{i\in I_2}\{p_i\}.$$ with $I_1\subset \{1,\cdots,m\}$, $I_2\subset \{0,\cdots,m\}$ disjoint. Then
\begin{equation}
\label{gafftratt}
\cal{G}:=\Big\{g\in L^2(\omega): g\text{ is affine in the }s\text{ variable in } (0,L)\times \bigcup_{i\in I_2}(p_i,p_{i+1})\Big\}.
\end{equation}
In particular, if $k\equiv 0$ on $[0,1]$, it follows that $\cal{G}$ is the set of all functions $g\in L^2(\omega)$ that are affine in the $s$ variable.

To prove \eqref{gafftratt}, assume for simplicity that $m=2$ and $\{s\in [0,1]: k(s)=0\}=[p_1,p_2]$. Let $g$ be affine in the $s$ variable in $(0,L)\times(p_1,p_2)$. Then, there exist $a,b\in L^2(0,L)$ such that
$$g(x_1,s)=a(x_1)+sb(x_1)$$
for a.e. $(x_1,s)\in (0,L)\times(p_1,p_2)$. Let now $0<\delta<\frac{L}{3}$ and let $\epsilon>0$. Arguing as in the proof of Lemma \ref{scriverelemma}, we extend $g$ to the set
$$\omega^{\delta}:=(-\delta, L+\delta)\times(-\delta,1+\delta)$$
and we define 
$$g^{\epsilon}(x_1,s)=\begin{cases}
a(x_1)+sb(x_1)&\text{ in }(0,L)\times(p_1-\epsilon, p_2+\epsilon),\\
g(x_1,s)&\text{ otherwise in }\omega^{\delta}.
\end{cases}$$  
It is easy to see that $g^{\epsilon}\ten g$ strongly in $L^2(\omega^{\delta})$ and $\partial_s^2 g^{\epsilon}=0$ in the sense of distributions in the set
$(0,L)\times(p_1-\epsilon,p_2+\epsilon)$ for every $\epsilon>0$.

 Fix $\epsilon$, let $0<\lambda<\tfrac{\epsilon}{2}$ and let $\rho\in C^{\infty}_0((-\lambda,\lambda)^2)$. We set $\capp{g}^{\epsilon}:=g^{\epsilon}\ast \rho$. Then $\capp{g}^{\epsilon}\in C^{\infty}(\lin{\omega})$ and $\partial_s^2\capp{g}^{\epsilon}=0$ in $(0,L)\times (p_1-\lambda,p_2+\lambda)$. 
Defining
$$u^{\epsilon}(x_1,s)=\int_0^{x_1}{\capp{g}^{\epsilon}(\xi,s)d\xi},$$ then $u^{\epsilon}\in C^{\infty}(\lin{\omega})$ and $\partial_s^2 u^{\epsilon}=0$ in $(0,L)\times (p_1-\lambda,p_2+\lambda)$. Hence, setting 
$$z^{\epsilon}=\begin{cases}
0&\text{ in }(0,L)\times (p_1-\lambda,p_2+\lambda)\\
\frac{\partial_s^2 u^{\epsilon}}{k}&\text{ otherwise, }
\end{cases}$$
the claim follows by Remark \ref{recuse}, considering a sequence of convolution kernels and applying a diagonal argument.

An easy adaptation of the previous argument leads to the proof of \eqref{gafftratt} in the general case.
\end{oss}

From here to the end of the section we shall assume that
\begin{equation}
\label{ls}
\exists \lim_{h\ten 0}\frac{\delta_h}{h^2}:=\lambda\quad\text{ and }\quad \exists\lim_{h\ten 0}\frac{\delta_h}{h^3}:=\mu.
\end{equation}
For any $0<\mu<+\infty$, we introduce the class 
\begin{multline}
\cal{C}_{\mu}:=\Big\{(g,b)\in L^2(\omega)\times L^2(\omega): \exists v\in L^2(\omega;\mathbb{R}^3)\,\text{ such that }\\\partial_s v\in L^2(\omega;\mathbb{R}^3), \,
\label{clbfin} 
\partial_s v\cdot {\tau}=0,\,  \partial_s(\partial_s v\cdot {n})=b\,\text{ and }
 \partial_1^2 v\cdot \tau+\mu\partial_s g=0\Big\},
\end{multline} 
where the last two equalities hold in the sense of distributions.

For $\mu=0$ we set
\begin{equation}
\label{c0}
\cal{C}_0:=\cal{G}\times \cal{B},
\end{equation}
where
\begin{multline}
\cal{B}:=\Big\{b\in L^2(\omega):\,\exists v\in L^2(\omega;\mathbb{R}^3)\text{ such that }\\\partial_s v\in L^2(\omega,\mathbb{R}^3),\, \label{clb0} 
\partial_s v\cdot {\tau}=0,\,\partial_s(\partial_s v\cdot {n})=b \text{ and } 
 \partial_1^2 v\cdot \tau=0\Big\},
\end{multline} 
and again the last two equalities hold in the sense of distributions.
\begin{oss}
\label{regol}
Let $b\in\cal{B}$ and let $v$ be as in \eqref{clb0}. Then the tangential component $v\cdot \tau$ belongs to $W^{3,2}(\st)$. Indeed, since $\partial_s(\partial_s v\cdot n)=b$ and $\partial_s v\in L^2(\st;\mathbb{R}^3)$, we deduce that $\partial_s^2(v\cdot n)\in L^2(\st)$. Since $\partial_s v\cdot \tau=0$, we have $\partial_s(v\cdot\tau)=k(v\cdot n)$ and then $\partial_s^2(v\cdot \tau), \partial_s^3(v\cdot \tau)\in L^2(\st)$. By the last condition in \eqref{clb0}, we have $\partial_1(v\cdot \tau)\in W^{-1,2}(\st)$, $\partial_1^2(v\cdot \tau) \in L^2(\st)$ and $\partial_s\partial_1(v\cdot \tau)=\partial_1\partial_s(v\cdot \tau)\in W^{-1,2}(\st)$. Therefore, by Lemma $\ref{lio}$, $\partial_1(v\cdot \tau)\in L^2(\st)$. Arguing analogously, by Lemma \ref{lio} we also have that $\partial_1\partial_s(v\cdot \tau)\in L^2(\st)$, therefore $v\cdot \tau\in W^{2,2}(\st)$ with $\partial_s^3 (v\cdot \tau)\in L^2(\st)$. Applying again Lemma \ref{lio}, it is straightforward to see that $v\cdot \tau\in W^{3,2}(\st)$. On the other hand, we have no regularity conditions on the derivatives with respect to $x_1$ of the normal component of $v$.

In the case where $\mu\neq 0$, if $(g,b)\in\cal{C}_{\mu}$ and $v$ is as in \eqref{clbfin}, then the regularity of $v\cdot \tau$ and $v\cdot n$ with respect to $s$ is the same as in the previous case. It is still true that $\partial_1(v\cdot \tau)\in L^2(\st)$ but, in general, one cannot guarantee that $v\cdot \tau\in W^{2,2}(\st)$.
\end{oss}
\begin{oss}
\label{rksyme}
A function $b\in L^2(\omega)$ belongs to $\cal{B}$ if and only if there exists a function $\phi\in L^2(\omega;\mathbb{R}^3)$, with $\phi\cdot \tau \in W^{3,2}(\omega)$, $\phi\cdot e_1\in W^{1,2}(\omega)$ and $\partial_s (\phi\cdot n), \partial_s^2 (\phi\cdot n)\in L^2(\omega)$, such that
\begin{equation}
\label{sb1}
e(\phi)=0
\end{equation}
and
\begin{equation}
\label{sb2}
\partial_s(\partial_s \phi\cdot n)=b.
\end{equation}
In other words, $\phi$ is an infinitesimal isometry of the cylindrical surface 
$$\Sigma:=\Big\{x_1e_1+\gamma(s):x_1\in (0,L), s\in (0,1)\Big\}$$
satisfying \eqref{sb2}.

We first observe that the regularity of $\phi$ is sufficient to guarantee that $e(\phi)$, defined as in \eqref{syg}, belongs to $L^2(\omega;\mathbb{M}^{2\times 2}_{\sym})$. Moreover, if $b\in L^2(\omega)$ and $v$ is as in \eqref{clb0}, then there exists $v_1\in W^{1,2}(\omega)$ such that
\begin{numcases}{}
\nonumber \partial_1 v_1=0,&\\
\nonumber \partial_s v_1=-\partial_1 v\cdot \tau.
\end{numcases}
The map $\phi:=v_1 e_1+v$ satisfies \eqref{sb1} and \eqref{sb2}. The converse statement is trivial.

Similarly, a pair $(g,b)\in L^2(\omega)\times L^2(\omega)$ belongs to $C_{\mu}$ if and only if there exists a function $\phi\in L^2(\omega;\mathbb{R}^3)$ with $\phi\cdot \tau \in W^{1,2}(\omega)$, $\partial_s^2(\phi\cdot \tau), \partial_s^3(\phi\cdot \tau)\in L^2(\omega)$, $\phi\cdot e_1 \in W^{1,2}(\omega)$ and $\partial_s(\phi\cdot n), \partial_s^2(\phi\cdot n)\in L^2(\omega)$, such that
$$e(\phi)=\Big(\begin{array}{cc}\mu g&0\\0&0\end{array}\Big)$$
and $$\partial_s(\partial_s \phi\cdot n)=b.$$
\end{oss}
\begin{oss}
As in the case of the class $\cal{G}$ introduced in \eqref{classg}, the structure of $\cal{B}$ and $\cal{C}_{\mu}$ depends on the behaviour of the curvature $k$ of $\gamma$. 

For instance, if $k\equiv 0$ on $[0,1]$, then $\cal{B}=L^2(\omega)$. Indeed, condition \eqref{sb1} implies in this case that there exist some $\alpha,\beta,\delta\in \mathbb{R}$ such that
$$\phi(x_1,s)=(\alpha s+\beta)e_1+(-\alpha x_1 +\delta)\tau+\phi_t(x_1,s)n$$
for a.e. $(x_1,s)\in\omega$, while condition \eqref{sb2} reads as $\partial_s^2 \phi_t=b$. Hence $\cal{B}=L^2(\omega)$. Similarly, it can be deduced that $C_{\mu}=\{g\in L^2(\omega): g\text{ is affine in }s\}\times L^2(\omega)$.

If, instead, $k(s)\neq 0$ for every $s\in[0,1]$, then $\cal{B}=\{b\in L^2(\omega): b\text{ is affine in }x_1 \}$. 
  \end{oss}
We conclude the section by proving some approximation results. The first result concerns the class $\cal{C}_{\mu}$ in the case $\mu\neq 0$.
 \begin{lem}
 \label{aplem}
 Let $(g,b)\in C_{\mu}$ with $\mu\neq 0$. Then, there exists a sequence $(\phi^{\epsilon})\subset C^5(\lin{\st};\mathbb{R}^3)$ such that
 \begin{equation}
 \label{1co}
 e(\phi^{\epsilon})=\Big(\begin{array}{cc}\partial_1\phi^{\epsilon}_1&0\\0&0\end{array}\Big)\ten \Big(\begin{array}{cc}\mu g&0\\0&0\end{array}\Big)
 \end{equation}
 strongly in $L^2(\omega;\mathbb{M}^{2\times 2}_{\sym})$ as $\epsilon\ten 0$ and
 \begin{equation}
 \label{2co}
 \partial_s(\partial_s \phi^{\epsilon}\cdot n)\ten b
 \end{equation}
 strongly in $L^2(\omega)$ as $\epsilon\ten 0$.
  \end{lem}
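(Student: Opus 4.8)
The plan is to start from the reformulation in Remark~\ref{rksyme}: since $(g,b)\in\cal{C}_\mu$, there is a field $\phi\in L^2(\omega;\mathbb R^3)$ with $\phi\cdot\tau\in W^{1,2}(\omega)$, $\partial_s^2(\phi\cdot\tau),\partial_s^3(\phi\cdot\tau)\in L^2(\omega)$, $\phi\cdot e_1\in W^{1,2}(\omega)$ and $\partial_s(\phi\cdot n),\partial_s^2(\phi\cdot n)\in L^2(\omega)$, such that $e(\phi)=\bigl(\begin{smallmatrix}\mu g&0\\0&0\end{smallmatrix}\bigr)$ and $\partial_s(\partial_s\phi\cdot n)=b$. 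Writing the frame components $p:=\phi\cdot e_1$, $q:=\phi\cdot\tau$, $r:=\phi\cdot n$ and using $\dot\tau=kn$, $\dot n=-k\tau$, these conditions become the distributional system
\begin{equation}
\label{apsys}
\partial_s q=kr,\qquad\partial_s p+\partial_1 q=0,\qquad\partial_1 p=\mu g,\qquad\partial_s^2 r+\dot k\,q+k^2 r=b,
\end{equation}
whence also $\partial_1^2 q+\mu\partial_s g=0$. It therefore suffices to produce $p^\epsilon,q^\epsilon,r^\epsilon\in C^5(\lin\omega)$ satisfying the first two identities in \eqref{apsys} \emph{exactly} for every $\epsilon>0$ and with $\partial_1 p^\epsilon\to\mu g$ and $\partial_s^2 r^\epsilon+\dot k\,q^\epsilon+k^2 r^\epsilon\to b$ strongly in $L^2(\omega)$: since $\tau,n\in C^5$, the field $\phi^\epsilon:=p^\epsilon e_1+q^\epsilon\tau+r^\epsilon n$ then belongs to $C^5(\lin\omega;\mathbb R^3)$, the two exact identities are precisely the statement $e(\phi^\epsilon)=\bigl(\begin{smallmatrix}\partial_1\phi^\epsilon_1&0\\0&0\end{smallmatrix}\bigr)$ with $\partial_1\phi^\epsilon_1=\partial_1 p^\epsilon$, and the two convergences give \eqref{1co} and \eqref{2co}.

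I would carry out the construction in the spirit of the proof of Lemma~\ref{scriverelemma}. First, extend $p,q,r$ — hence also $g$ and $b$ — to the enlarged strip $\omega_\delta=(-\delta,L+\delta)\times(0,1)$, $0<\delta<\tfrac{L}{3}$, by reflection formulas of the type used there (coefficients $6,-8,3$ and $6,-32,27$), the coefficients for each component being chosen so that the whole system \eqref{apsys} is preserved on $\omega_\delta$: the two identities that involve only $\partial_s$ and functions of $s$ survive any common $x_1$-reflection, and the matching of the first and second $x_1$-derivatives at the endpoints fixes the remaining coefficients so that $\partial_s p+\partial_1 q=0$ and $\partial_1 p=\mu g$ persist as well. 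One may then work on $\omega_\delta$ and restrict at the end. Next I would regularize. Mollification in the $x_1$ variable commutes with all four identities (every coefficient depends on $s$ only), so convolution with $\rho_\eta(x_1)$ yields fields that are $C^\infty$ in $x_1$, keep their $s$-regularity, and still solve \eqref{apsys} on $\omega$ with $g,b$ replaced by $g\ast_{x_1}\rho_\eta\to g$, $b\ast_{x_1}\rho_\eta\to b$ in $L^2(\omega)$. To regularize in $s$ while keeping $\partial_s q^\epsilon=k r^\epsilon$ exact I would \emph{not} mollify $q$ directly (which preserves the relation only up to a commutator with $k$), but instead approximate $r$ in the $s$-direction by $r^{\eta,\nu}\in C^\infty$ with $r^{\eta,\nu},\partial_s r^{\eta,\nu},\partial_s^2 r^{\eta,\nu}$ converging in $L^2$ as $\nu\to 0$ (possible since $\partial_s r^\eta,\partial_s^2 r^\eta\in L^2$) and still smooth in $x_1$, and then \emph{define} $q^{\eta,\nu}(x_1,s):=q_0^{\eta}(x_1)+\int_0^s k(\sigma)\,r^{\eta,\nu}(x_1,\sigma)\,d\sigma$ with $q_0^{\eta}:=q^\eta(\cdot,0)$. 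Then $\partial_s q^{\eta,\nu}=k r^{\eta,\nu}$ holds on the nose, $q^{\eta,\nu}\to q^\eta$ in $L^2$ (so that $\partial_s^2 r^{\eta,\nu}+\dot k\,q^{\eta,\nu}+k^2 r^{\eta,\nu}\to b^\eta$ in $L^2$), and $q^{\eta,\nu}\in C^5(\lin{\omega_\delta})$ because $k\in C^4$. It remains to choose $p^{\eta,\nu}\in C^5$ with $\partial_s p^{\eta,\nu}+\partial_1 q^{\eta,\nu}=0$ and $\partial_1 p^{\eta,\nu}\to\mu g$ in $L^2(\omega)$, after which a diagonal choice $\eta=\eta(\epsilon)\to 0$, $\nu=\nu(\epsilon)\to 0$ together with the Poincar\'e inequality in $x_1$ (to pass from convergence of $x_1$-derivatives to convergence of the fields, exactly as at the end of the proof of Lemma~\ref{scriverelemma}) finishes the argument.

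The delicate point is this last step, and I expect it to be the main obstacle. Because $q^{\eta,\nu}$ is built from $r^{\eta,\nu}$ and $r$ carries essentially no regularity in $x_1$, once $p^{\eta,\nu}$ is obtained by integrating $\partial_s p^{\eta,\nu}=-\partial_1 q^{\eta,\nu}$ in the $s$ variable its $x_1$-derivative
$$\partial_1 p^{\eta,\nu}(x_1,s)=\bigl(p_0^{\eta,\nu}\bigr)'(x_1)-s\,\partial_1^2 q_0^{\eta}(x_1)-\int_0^s (s-\sigma)\,k(\sigma)\,\partial_1^2 r^{\eta,\nu}(x_1,\sigma)\,d\sigma$$
involves $\partial_1^2 r^{\eta,\nu}$, which is a priori unbounded as $\epsilon\to 0$. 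The way out is that $g$ and $b$ are not independent data: the structural identity $\partial_1^2 q=-\mu\partial_s g$ (and its consequence $k\,\partial_1^2 r=-\mu\partial_s^2 g$) shows that, after the $x_1$-mollification, $k\,\partial_1^2 r^{\eta,\nu}=-\mu\partial_s^2 g^\eta+o_\nu(1)$ in $L^2$, so that the last integral equals $-\mu\bigl(g^\eta(x_1,s)-g^\eta(x_1,0)-s\,\partial_s g^\eta(x_1,0)\bigr)+o_\nu(1)$; the leftover is affine in $s$ and is annihilated by choosing $\bigl(p_0^{\eta,\nu}\bigr)'(x_1)=\mu g^\eta(x_1,0)$, using that $\partial_1^2 q_0^{\eta}(x_1)=\partial_1^2 q^\eta(\cdot,0)=-\mu\partial_s g^\eta(x_1,0)$ by the very same identity. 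This gives $\partial_1 p^{\eta,\nu}=\mu g^\eta+o_\nu(1)\to\mu g$ in $L^2(\omega)$. Turning this bookkeeping into clean estimates — justifying the integrations by parts through the trace regularity supplied by Remark~\ref{rksyme}, keeping all error terms under control simultaneously through the $x_1$-mollification and the $s$-regularization, and extracting the diagonal sequence — is the technical core of the lemma, and it is precisely here that $\mu\neq 0$ is used in an essential way; the complementary case $\mu=0$ is treated separately through the class $\cal{C}_0=\cal{G}\times\cal{B}$, which, consistently with the Introduction, is the one requiring the additional assumption on the zero set of $k$.
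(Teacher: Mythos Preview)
Your outline is correct and follows essentially the same strategy as the paper: start from the characterization in Remark~\ref{rksyme}, extend to $\omega_\delta$, mollify in $x_1$ (which commutes with all four relations in your system~\eqref{apsys}), then approximate the normal component $r$ in $s$, rebuild $q$ from $r$ via $\partial_s q=kr$, and finally build $p$ from $q$ via $\partial_s p=-\partial_1 q$. The paper does exactly this, writing $v_t=r$, $v_s=q$.

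Where the paper is cleaner is in the handling of what you call the ``delicate point''. Rather than fixing boundary values at $s=0$ and invoking the trace identities $\partial_1^2 q_0^{\eta}=-\mu\partial_s g^{\eta}(\cdot,0)$ and $k\partial_1^2 r=-\mu\partial_s^2 g$, the paper introduces a scalar potential $u$ one level higher, with $\partial_1 u=p$ and $\partial_s u=-q$ (such $u$ exists precisely because of $\partial_s p+\partial_1 q=0$). After the $x_1$-mollification one has $v_t\in W^{2,2}(\omega)$, and the paper then approximates $v_t^\epsilon\to v_t$ in the \emph{full} $W^{2,2}$ norm; it builds $v_s^\epsilon$ and $u^\epsilon$ by integrating in $s$ and matching the \emph{means} $\int_0^1 v_s^\epsilon\to\int_0^1 v_s$ in $W^{3,2}(0,L)$ and $\int_0^1 u^\epsilon\to\int_0^1 u$ in $W^{4,2}(0,L)$. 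Convergence $\partial_1^2 u^\epsilon\to g$ then follows from two successive applications of the Poincar\'e inequality in $s$, using only that $\partial_1^2 v_t^\epsilon\to\partial_1^2 v_t$ in $L^2$ (supplied by the $W^{2,2}$ approximation). This replaces your trace bookkeeping by mean–matching plus Poincar\'e, avoids evaluating $g^\eta$, $\partial_s g^\eta$ at $s=0$, and makes the argument shorter; but your route is a valid alternative.
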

  \begin{oss}
  By Lemma \ref{aplem} it follows, in particular, that if $(g,b)\in C_{\mu}$ with $\mu\neq 0$, then $g\in \cal{G}$.
  \end{oss}
 \begin{proof}[Proof of Lemma \ref{aplem}.]
  Without loss of generality we may assume that $\mu=1$. By the definition of $C_{\mu}$ and by Remark \ref{rksyme} there exists $\phi\in L^2(\omega;\mathbb{R}^3)$ with $\phi\cdot \tau\in W^{1,2}(\omega)$, \mbox{$\partial_s^2 (\phi\cdot \tau)$}, $\partial_s^3 (\phi\cdot \tau)\in L^2(\omega)$, $\phi\cdot e_1\in W^{1,2}(\omega)$ and $\partial_s (\phi\cdot n), \partial_s^2 (\phi\cdot n)\in L^2(\omega)$, such that
  \begin{equation}
  \label{3co}
  e(\phi)=\Big(\begin{array}{cc}g&0\\0&0\end{array}\Big)
  \end{equation}
  and $\partial_s(\partial_s \phi\cdot n)=b$. By \eqref{3co} it follows that
  \begin{equation}
  \label{4co}
  \partial_1\phi\cdot \tau+\partial_s\phi_1=0.
  \end{equation}
  Hence, there exists $u\in W^{1,2}(\omega)$, with $\partial_1 u\in W^{1,2}(\omega)$ such that $\partial_1 u=\phi_1$ and 
  \begin{equation}
  \label{5co}
  \phi\cdot \tau+\partial_s u=0
  \end{equation}
  holds in the sense of $L^2(\omega)$. Indeed, by \eqref{4co}, if $\lin{u}\in W^{1,2}(\omega)$ satisfies $\partial_1 \lin{u}=\phi_1$, there exists $\varphi\in W^{1,2}(0,1)$ such that
  $$\phi\cdot \tau+\partial_s \lin{u}=\dot{\varphi}.$$
  Defining $u:=\lin{u}-\varphi$, then $u$ has the required properties.
  
  We set $v=(\phi\cdot \tau)\tau+(\phi\cdot n)n$. For the sake of simplicity, we divide the proof into two steps.\\
  {\bf Step 1.}\\
 We claim that we can always reduce to the case where \mbox{$u\in W^{4,2}(\st)$}, $v_s:=v\cdot\tau \in W^{3,2}(\st)$, and $v_t:=v\cdot n\in W^{2,2}(\st)$, with $\partial_1^i u,\partial_1^i v_t,\partial_1^i v_s, \partial_1^i g \in L^2(\st)$ for every $i\in\mathbb{N}$.
 
  Let $0<\delta<\frac{L}{3}$. Arguing as in the proof of Lemma \ref{scriverelemma} we may extend $u$ and $v$ to the set
 $$\omega_\delta:=(-\delta,L+\delta)\times(0,1)$$
 in such a way that, denoting by $\til{v}$ and $\til{u}$ the extended map and setting 
 $\til{g}=\partial_1^2 \til{u}$ and $\til{b}=\partial_s (\partial_s \til{v}\cdot n)$ in $\stee$, then $\til{g}$ and $\til{b}$ are respectively extensions of $g$ and $b$ to $\omega_{\delta}$. Moreover, $\til{u}\in W^{1,2}(\stee)$ with $\partial_1 \til{u}\in W^{1,2}(\stee)$, $\til{v}_s\in W^{1,2}(\stee)$ with $\partial_s^2 \til{v}_s, \partial_s^3 \til{v}_s\in L^2(\stee)$ and $\til{v}_t, \partial_s \til{v}_t, \partial_s^2 \til{v}_t \in L^2(\stee)$. Finally, by \eqref{3co} and \eqref{5co}, $(\til{u},\til{v})$ solves
 \begin{equation}
 \nonumber 
 \partial_s \til{v}\cdot \tau=0 \quad \text{ and }\quad \til{v}\cdot \tau+\partial_s \til{u}=0\quad \text{ in }\stee.
 \end{equation}

 We now mollify the functions $\til{u},\til{v}, \til{g},$ and $\til{b}$ with respect to the $x_1$ variable. Let $0<\epsilon<\delta$, let $(\rho^{\epsilon})\subset C^{\infty}_0(-\epsilon,\epsilon)$ be a sequence of convolution kernels and let
 \begin{numcases}{}
 \nonumber\til{u}^{\epsilon}(x_1,s):=(\til{u}(\cdot,s)\ast \rho^{\epsilon})(x_1),&\\
 \nonumber\til{v}^{\epsilon}(x_1,s):=(\til{v}(\cdot,s)\ast\rho^{\epsilon})(x_1),&\\
 \nonumber\til{b}^{\epsilon}(x_1,s):=(\til{b}(\cdot,s)\ast \rho^{\epsilon})(x_1),&\\ 
 \nonumber\til{g}^{\epsilon}(x_1,s):=(\til{g}(\cdot,s)\ast\rho^{\epsilon})(x_1)&
 \end{numcases} 
 for a.e. $(x_1,s)\in \st$ and for any $\epsilon$. Then $(\til{u}^{\epsilon},\til{v}^{\epsilon})$ solves
 \begin{equation}
 \nonumber
 \partial_1^2\til{u}^{\epsilon}=\til{g}^{\epsilon},\quad \partial_s \til{v}^{\epsilon}\cdot \tau=0,\quad
  \til{v}^{\epsilon}\cdot \tau+\partial_s \til{u}^{\epsilon}=0 \quad \text{ and }\quad
 \partial_s(\partial_s \til{v}^{\epsilon}\cdot n)=\til{b}^{\epsilon} 
\end{equation}
in $\st$ for every $\epsilon$. Moreover $\til{b}^{\epsilon}\ten \til{b}$ in $L^2(\st)$ and $\til{g}^{\epsilon}\ten \til{g}$ in $L^2(\st)$. Now, $(\til{v}^{\epsilon}_s)\subset W^{3,2}(\st)$ and $(\til{v}^{\epsilon}_t)\subset W^{2,2}(\st)$ with \mbox{$(\partial_1^i \til{v}^{\epsilon}_s),(\partial_1^i\til{v}^{\epsilon}_t)\subset L^2(\st)$} for every $i\in\mathbb{N}$. Therefore, \mbox{$(\partial_s \til{u}^{\epsilon})\subset W^{3,2}(\st)$}. Since $(\partial_1^i\til{u}^{\epsilon})\subset L^2(\st)$ for every $i\in\mathbb{N}$, then \mbox{$(\til{u}^{\epsilon})\subset W^{4,2}(\st)$} and the proof of the claim is completed.\\
{\bf Step 2.}\\
Assume now that \mbox{$u\in W^{4,2}(\st)$}, $v_s:=v\cdot\tau \in W^{3,2}(\st)$ and $v_t:=v\cdot n\in W^{2,2}(\st)$, with $\partial_1^i u,\partial_1^i v_t,\partial_1^i v_s, \partial_1^i g \in L^2(\st)$ for every $i\in\mathbb{N}$. Since \mbox{$v_t\in W^{2,2}(\st)$} there exists a sequence $(v_t^{\epsilon})\subset C^{\infty}(\lin{\st})$ such that 
\begin{equation}
\label{cvtn}
v_t^{\epsilon}\ten v_t
\end{equation} strongly in $W^{2,2}(\st)$.\\ Let $v^{\epsilon}_s\in C^5(\lin{\omega})$ be the solution of 
\begin{equation}
\label{6co}
\partial_s v^{\epsilon}_s=kv^{\epsilon}_t
\end{equation}
in $\omega$, with $\myintt{v^{\epsilon}_s(x_1,s)}\in C^{\infty}([0,L])$ for any $\epsilon>0$ and
\begin{equation}
\label{7co}
\myintt{v^{\epsilon}_s(x_1,s)}\ten \myintt{v_s(x_1,s)}
\end{equation} strongly in $W^{3,2}(0,L)$. By Poincar\'e inequality, we deduce
$$\|v^{\epsilon}_s-v_s\|_{L^2(\omega)}\leq C\Big(\Big\|\myintt{(v^{\epsilon}_s-v_s)}\Big\|_{L^2(\omega)}+\|k(v^{\epsilon}_t-v_t)\|_{L^2(\omega)}\Big)$$ 
and hence, by \eqref{cvtn} and \eqref{7co}
\begin{equation}
\label{8co}
 v^{\epsilon}_s \ten v_s \quad \text{ and }\quad \partial_s v^{\epsilon}_s \ten \partial_s v_s 
 \end{equation}
strongly in $L^2(\omega)$. Let ${u}^{\epsilon}\in C^6(\lin{\st})$ be the solution of 
\begin{equation}
\label{9co}
\partial_s u^{\epsilon}+v^{\epsilon}_s=0
\end{equation}
in $\omega$, with $\myintt{u^{\epsilon}(x_1,s)}\in C^{\infty}([0,L])$, 
\begin{equation}
\label{9cobis}
\myintt{{u}^{\epsilon}(x_1,s)}\ten \myintt{u(x_1,s)}
\end{equation}
strongly in $W^{4,2}(0,L)$. By Poincar\'e inequality,
\begin{multline*}
\|\partial_1^2\partial_s (u^{\epsilon}-u)\|_{L^2(\omega)}=\|\partial_1^2(v^{\epsilon}_s-v_s)\|_{L^2(\omega)}\\\leq C\Big(\Big\|\myintt{\partial_1^2(v^{\epsilon}_s-v_s)}\Big\|_{L^2(\omega)}+\|k\partial_1^2(v^{\epsilon}_t-v_t)\|_{L^2(\omega)}\Big)
\end{multline*}
which converge to zero due to \eqref{cvtn} and \eqref{7co}. Hence, by \eqref{9cobis} and by Poincar\'e inequality \begin{equation}
\label{10co}
\partial_1^2 u^{\epsilon}\ten \partial_1^2 u=g
\end{equation}
strongly in $L^2(\omega)$. Defining
$$\phi^{\epsilon}:=\partial_1 u^{\epsilon} e_1+v^{\epsilon},$$
then \eqref{1co} follows by \eqref{6co}, \eqref{9co} and \eqref{10co}. Moreover
\begin{equation}
\nonumber  \partial_s(\partial_s \phi^{\epsilon}\cdot n)=\partial_s^2 v^{\epsilon}_t+\dot{k}v^{\epsilon}_s+k\partial_s v^{\epsilon}_s.
\end{equation}
Therefore, \eqref{2co} follows from \eqref{cvtn} and \eqref{8co}, and the proof of the lemma is completed.
 \end{proof}
 The next lemma provides an approximation result for the elements of the class $\cal{B}$ introduced in \eqref{clb0}. We require an additional condition on the sign of the curvature. 
  \begin{lem}
 \label{aplem2}
 Assume there exists a finite number of points $0=p_0<p_1<\cdots<p_m=1$ such that, for every $i=0,\cdots,m-1$, $k(s)>0$ for every $s\in (p_i,p_{i+1})$, or $k(s)<0$ for every $s\in (p_i,p_{i+1})$ or $k(s)=0$ for every $s\in (p_i,p_{i+1})$.
 Let $b\in \cal{B}$. Then, there exists a sequence \mbox{$(\phi^{\epsilon})\subset C^5(\lin{\st};\mathbb{R}^3)$} such that
 \begin{equation}{}
 \label{Eqaf1} 
 e(\phi^{\epsilon})=0\quad \text{ for every }\epsilon>0
 \end{equation} 
 and
\begin{equation}
\label{Eqaf2}
\partial_s(\partial_s \phi^{\epsilon}\cdot n)\ten b 
\end{equation}
strongly in $L^2(\st)$ as $\epsilon\ten 0$.
\end{lem}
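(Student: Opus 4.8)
The plan is to reduce, via Remark \ref{rksyme}, to constructing smooth infinitesimal isometries of the cylinder $\Sigma$, and then to build them by approximating the essentially one‑dimensional profile data of such an isometry, treating separately the intervals on which $k$ has a fixed sign and gluing across the finitely many points $p_i$. By Remark \ref{rksyme}, $b\in\cal{B}$ means there is $\phi\in L^2(\st;\mathbb{R}^3)$, with the regularity stated there, such that $e(\phi)=0$ and $\partial_s(\partial_s\phi\cdot n)=b$; hence it suffices to produce $(\phi^\epsilon)\subset C^5(\lin{\st};\mathbb{R}^3)$ with $e(\phi^\epsilon)=0$ for all $\epsilon$ and $\partial_s(\partial_s\phi^\epsilon\cdot n)\ten b$ strongly in $L^2(\st)$. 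Writing $\phi=\phi_1 e_1+\phi_s\tau+\phi_t n$, the condition $e(\phi)=0$ is equivalent to $\partial_1\phi_1=0$, $\partial_s\phi_1+\partial_1\phi_s=0$ and $\partial_s\phi_s=k\phi_t$; thus $\phi_1$ depends on $s$ only, the tangential component $\phi_s(x_1,s)=-x_1\dot\phi_1(s)+\psi(s)$ is affine in $x_1$, and $\phi_t=\tfrac1k\partial_s\phi_s$ wherever $k\neq0$ (so on a curved interval $\phi_t=x_1A(s)+B(s)$), whereas on each interval where $k\equiv0$ the component $\phi_t$ is free and encodes $b$ via $\partial_s^2\phi_t=b$.

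First I would reduce to $\phi$ being smooth in $x_1$, exactly as in Step~1 of the proof of Lemma \ref{aplem}: extend $\phi_t$ and $b$ to $\omega_\delta$ by the reflection of Lemma \ref{scriverelemma} (on the flat part only, the complement already being affine in $x_1$), leave $\phi_1,\psi$ unchanged, and mollify in $x_1$; the three relations defining $e(\phi)=0$ have coefficients depending on $s$ alone, hence are preserved, and $\partial_s(\partial_s\phi\cdot n)\ten b$ in $L^2(\st)$. One may thus assume $\phi\in C^\infty$ in $x_1$ with all $x_1$-derivatives in $L^2(\st)$, and, arguing as in Remarks \ref{regol}--\ref{rksyme}, that $\phi_1,\phi_s,\phi_t$ are regular enough in $s$, with $\phi_1,\psi$ of class $W^{4,2}_{\mathrm{loc}}$, $W^{3,2}_{\mathrm{loc}}$ respectively on the open intervals where $k\neq0$.

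The core step is the approximation in $s$. On each interval on which $k$ has constant sign I would mollify in $s$ the coefficients $A,B$ of $\phi_t=x_1A+B$, obtaining $A^\epsilon,B^\epsilon\in C^\infty$ with $A^\epsilon\ten A$, $B^\epsilon\ten B$ in $W^{2,2}$, and set $\phi_t^\epsilon:=x_1A^\epsilon+B^\epsilon$ there; on each flat interval I would mollify $\phi_t$ in $s$. These two prescriptions agree at an endpoint $p_i$ of a flat interval up to order one (since $\phi_t$ is $C^1$ in $s$ there) but not at the level of $\partial_s^2$ (the curved side gives $x_1\ddot A+\ddot B$, the flat side gives $b$); I would reconcile them by interpolating the two pieces on a collar $\{|s-p_i|<\lambda^\epsilon\}$ contained in the flat part, with $\lambda^\epsilon\ten0$, where no $x_1$-affineness is required because $k\equiv0$ there. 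This produces a single $\phi_t^\epsilon\in C^5(\lin\st)$ which is affine in $x_1$ on $\{k\neq0\}$, satisfies $\phi_t^\epsilon\ten\phi_t$ in $L^2(\st)$, and $\partial_s^2\phi_t^\epsilon\ten\partial_s^2\phi_t$ in $L^2(\st)$ (away from the collars by mollifier convergence, and on the collars, whose total measure tends to $0$, the error being bounded by $\int_{\bigcup_i\{|s-p_i|<\lambda^\epsilon\}}\!(|\partial_s^2\phi_t^\epsilon|^2+|\partial_s^2\phi_t|^2)\ten0$, as $b,\partial_s^2\phi_t\in L^2(\st)$). I would then take $\phi_s^\epsilon$ to be the primitive in $s$ of $k\phi_t^\epsilon$ that is affine in $x_1$ (legitimate since $k$ is supported where $\phi_t^\epsilon$ is affine in $x_1$) and normalised so that $\phi_s^\epsilon(\cdot,0)\ten\phi_s(\cdot,0)$, and $\phi_1^\epsilon$ the function of $s$ with $\partial_1\phi_1^\epsilon=0$, $\partial_s\phi_1^\epsilon=-\partial_1\phi_s^\epsilon$ and $\phi_1^\epsilon\ten\phi_1$. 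Then $\phi^\epsilon:=\phi_1^\epsilon e_1+\phi_s^\epsilon\tau+\phi_t^\epsilon n$ lies in $C^5(\lin\st;\mathbb{R}^3)$ since $\gamma\in C^6$, satisfies $e(\phi^\epsilon)=0$ by the identities above, and $\partial_s(\partial_s\phi^\epsilon\cdot n)=\partial_s^2\phi_t^\epsilon+\dot k\,\phi_s^\epsilon+k\,\partial_s\phi_s^\epsilon\ten\partial_s^2\phi_t+\dot k\,\phi_s+k\,\partial_s\phi_s=b$ strongly in $L^2(\st)$, using $\phi_s^\epsilon\ten\phi_s$ and $\partial_s\phi_s^\epsilon=k\phi_t^\epsilon\ten k\phi_t=\partial_s\phi_s$; a diagonal argument over the mollification kernels and over $\lambda^\epsilon$ concludes.

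The main obstacle is exactly this gluing near the endpoints of the flat intervals: on the curved side the isometry constraint rigidly forces $\phi_t^\epsilon=\tfrac1k\partial_s\phi_s^\epsilon$ to be affine in $x_1$, whereas on the flat side $\phi_t^\epsilon$ must approximate the independent, genuinely $x_1$-dependent datum $b$, and the two must be bridged inside a single $C^5$, $x_1$-affine infinitesimal isometry without spoiling the $L^2$-convergence of the second $s$-derivative. This is where the hypothesis on the sign of $k$ is used: it ensures that $\{k=0\}$ is a finite union of closed intervals and isolated points, so that finitely many collars suffice, and that on the curved intervals the quotient $\tfrac1k\partial_s\phi_s$ is well behaved (zeros of $k$ at the $p_i$ being matched by the corresponding vanishing of $\partial_s\phi_s$, which follows from the regularity of $\phi$). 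No such difficulty appears in Lemma \ref{aplem}, where $e(\phi^\epsilon)$ is only required to be diagonal and $\phi_t^\epsilon$ is not tied to $\phi_1^\epsilon$ across $\{k=0\}$.
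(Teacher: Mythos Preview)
Your overall strategy --- reducing to the normal component $\phi_t$, keeping it affine in $x_1$ on the set $\{k\neq 0\}$, and then rebuilding $\phi_s^\epsilon$ and $\phi_1^\epsilon$ by integration --- is the right one and matches the paper. The genuine gap is the gluing step on the collars.

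You assert that on the collar $\{|s-p_i|<\lambda^\epsilon\}$ the contribution $\int_{\text{collar}}|\partial_s^2\phi_t^\epsilon|^2$ tends to zero ``as $b,\partial_s^2\phi_t\in L^2(\st)$''. But on the collar $\phi_t^\epsilon$ is \emph{not} the original $\phi_t$: it is an interpolant between two independently mollified pieces whose $C^5$-jets at the collar boundary need not agree. If you glue by a cutoff $\eta$, then $\partial_s^2\phi_t^\epsilon$ contains the term $\eta''(\phi_t^{\epsilon,\text{curved}}-\phi_t^{\epsilon,\text{flat}})$, with $|\eta''|\sim(\lambda^\epsilon)^{-2}$; the resulting $L^2$-contribution is of order $(\lambda^\epsilon)^{-3}\|\phi_t^{\epsilon,\text{curved}}-\phi_t^{\epsilon,\text{flat}}\|_{L^\infty(\text{collar})}^2$, and you have given no estimate on this mismatch (it is of the size of the mollification error, so typically $O(\epsilon)$, which forces $\lambda^\epsilon\gg\epsilon^{2/3}$ and still leaves the first-derivative term uncontrolled). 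The separate mollification of $A,B$ on each curved interval also creates a mismatch problem at isolated zeros of $k$, where there is no flat segment in which to hide a collar; you do not treat this case.

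The paper avoids all of this with a single device: rather than gluing, it \emph{translates} $v_t$ in the $s$ variable, $v_t^\epsilon(x_1,s):=v_t(x_1,s-\epsilon)$ (and variants thereof), so that the region where $v_t$ is already affine in $x_1$ is shifted to cover a neighbourhood of the curved/flat interface, and then mollifies \emph{globally} with a kernel of width $\lambda<\epsilon$. After convolution the mollified function is automatically affine in $x_1$ on all of $\{k\neq0\}$, is globally $C^\infty$, and converges together with its first and second $s$-derivatives in $L^2$ by standard mollifier estimates --- no interpolation, no matching of jets, no collar bookkeeping. The case of isolated zeros (Step~1 in the paper) is even simpler: then $k\partial_1^2 v_t=0$ forces $\partial_1^2 v_t=0$ a.e., so $v_t=x_1\beta_1(s)+\beta_0(s)$ \emph{globally} with $\beta_i\in W^{2,2}(0,1)$, and one just mollifies $\beta_0,\beta_1$ on the whole interval. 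I would recommend replacing your gluing argument by this translation trick.
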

 \begin{proof}
 By definition of $\cal{B}$ there exists $v\in L^2(\st;\mathbb{R}^3)$, with $\partial_s v\in L^2(\st;\mathbb{R}^3)$, such that
 \begin{eqnarray}{}
  \label{Eqaf3}&&\partial_s v\cdot \tau=0, \\
 \label{Eqaf4}&&\partial_s(\partial_s v\cdot n)=b, \\
 \label{Eqaf5}&&\partial_1^2 v \cdot \tau=0. 
 \end{eqnarray}
 
Arguing as in Step 1 of the proof of Lemma \ref{aplem}, we may extend both $v$ and $b$ to the set $\omega_{\delta}:=(-\delta,L+\delta)\times(0,1)$ for $0<\delta<\frac{L}{3}$ and, up to a regularization in the $x_1$ variable, we may assume that $v_t:=v\cdot n\in W^{2,2}(\omega)$, $v_s:=v\cdot\tau\in W^{3,2}(\omega)$ and $\partial_1^i v_t,\partial_1^i v_s, \partial_1^i b\in L^2(\omega)$ for every $i\in\mathbb{N}$. Moreover, by \eqref{Eqaf5} there exist $\alpha_0,\alpha_1\in W^{3,2}(0,1)$ such that 
\begin{equation}
\label{Eqafvs}
 v_s(x_1,s)=\alpha_0(s)+x_1\alpha_1(s),
 \end{equation}
for a.e. $(x_1,s)\in\omega$.

Let $Z:=\{s\in [0,1]:k(s)=0\}$. By assumption, $Z$ is the union of a finite number of intervals with a finite number of isolated points.
 For simplicity, we divide the proof into three steps. We first consider the case where $Z$ is a finite union of points. In the second step, we assume $Z$ to be a finite union of closed intervals and in the third step we study the general case.\\
{ \bf Step 1.}\\
 Assume that $Z=\bigcup_{i\in I}\{p_i\}$ for some $I\subset \{0,\cdots,m\}$. By \eqref{Eqaf3} and \eqref{Eqaf5}, we have $$k\partial_1^2 v_t=0$$
 a.e. in $\omega$, which in turn gives 
 \begin{equation}
 \label{vtaff}
 \partial_1^2 v_t=0
 \end{equation} 
 a.e. in $\omega$. Hence, by \eqref{Eqaf3}, \eqref{Eqafvs}, and $\eqref{vtaff}$, there exist $\beta_0,\beta_1\in W^{2,2}(0,1)$ such that
 \begin{equation}
 \label{vtaffbis}
 v_t(x_1,s)=\beta_0(s)+x_1\beta_1(s)\quad \text{ and }\quad\dot{\alpha}_i(s)=k(s)\beta_i(s),\quad i=0,1,
 \end{equation}
 a.e. in $\omega$. Therefore there exist two sequences $(\beta^{\epsilon}_0),(\beta^{\epsilon}_1)\subset C^{\infty}([0,1])$ such that 
 \begin{equation}
 \label{Eqaf6}
 \beta^{\epsilon}_i\ten\beta_i
 \end{equation}
  strongly in $W^{2,2}(0,1)$, as $\epsilon\ten 0$, for $i=0,1$. Let $\alpha_i^{\epsilon}\in C^5([0,1])$ be the solution of
 \begin{equation}
 \label{Eqaf7}
 \dot{\alpha}_i^{\epsilon}=k\beta_i^{\epsilon}\quad \text{ in }(0,1)
 \end{equation}
 such that $\myintt{\alpha^{\epsilon}_i}=\myintt{\alpha_i}$ for every $\epsilon$, for $i=0,1$. By Poincar\'e inequality, we deduce
 $$\|\alpha^{\epsilon}_i-\alpha_i\|_{L^2(0,1)}\leq C\|k(\beta^{\epsilon}_i-\beta_i)\|_{L^2(0,1)},$$
 hence \eqref{Eqaf6} and \eqref{Eqaf7} imply
 \begin{equation}
 \label{Eqaf8}
 \alpha^{\epsilon}_i\ten\alpha_i
 \end{equation}
  strongly in $W^{1,2}(0,1)$, $i=0,1$. Taking $\phi^{\epsilon}_1\in C^6([0,1])$ to be a solution of 
  \begin{equation}
  \label{Eqaf8bis}
  \dot{\phi}^{\epsilon}_1=-\alpha^{\epsilon}_1
  \end{equation} for every $\epsilon$ and setting
 $$\phi^{\epsilon}:=\phi^{\epsilon}_1e_1+(\alpha_0^{\epsilon}+x_1\alpha_1^{\epsilon})\tau+(\beta_0^{\epsilon}+x_1\beta_1^{\epsilon})n,$$
we have that $\phi^{\epsilon}\in C^5(\lin{\omega},\mathbb{R}^3)$, \eqref{Eqaf1} holds owing to \eqref{Eqaf7} and \eqref{Eqaf8bis}, while convergence \eqref{Eqaf2} is a straightforward consequence of \eqref{Eqaf4}, \eqref{Eqafvs}, \eqref{vtaffbis}, \eqref{Eqaf6} and \eqref{Eqaf8}.
 \\{\bf Step 2.}\\
 Assume that $Z=[p_1,1]$, with $0< p_1<1$. By \eqref{Eqaf3} and \eqref{Eqaf5}, $\partial_1^2 v_t=0$ in $(0,L)\times(0,p_1)$. Arguing as in the proof of Lemma \ref{scriverelemma}, we define $\omega^{\delta}:=(-\delta,L+\delta)\times(-\delta,1+\delta)$ and we extend $v_t$ to the set $\omega^{\delta}$ for a suitable $\delta>0$ in such a way that $v_t\in W^{2,2}(\omega^{\delta})$ and $\partial_1^2 v_t=0$ in $(-\delta,L+\delta)\times(-\delta,p_1)$.
 
 We slightly modify the map $v_t$ close to the point $p_1$ so that it remains affine with respect to $x_1$ in a neighbourhood of this point. More precisely, for $\epsilon<\frac{\delta}{2}$, we set
 $$v_t^{\epsilon}(x_1,s):=v_t(x_1,s-\epsilon)\quad\text{ in }\omega^{\frac{\delta}{2}}.$$
 It is easy to see that $(v_t^{\epsilon})\subset W^{2,2}(\omega^{\frac{\delta}{2}})$, moreover
 $$v_t^{\epsilon}\ten v_t,\quad \partial_s v_t^{\epsilon}\ten \partial_s v_t\quad\text{ and }\partial_s^2 v_t^{\epsilon}\ten \partial_s^2 v_t $$
 strongly in $L^2(\omega^{\frac{\delta}{2}})$ as $\epsilon\ten 0$ and $$\partial_1^2 v_t^{\epsilon}=0 \quad\text{ in }(-\delta, L+\delta)\times(-\epsilon, p_1+\epsilon).$$
 
 To conclude, we regularize the sequence $(v^{\epsilon}_t)$ by mollification.
 Let $0<\lambda<\epsilon$ and let $\rho\in C^{\infty}_0((-\lambda,\lambda)^2)$. Defining $\til{v}^{\epsilon}_t:=v^{\epsilon}_t \ast \rho$, we have that $\til{v}^{\epsilon}_t\in C^{\infty}(\lin{\omega})$ and 
 \begin{equation}
 \label{daffvt}
 \partial_1^2 \til{v}^{\epsilon}_t=0 \quad \text{ in }(0,L)\times (0,p_1).
 \end{equation}
  Considering a sequence of convolution kernels and applying a diagonal argument we may also assume that
 \begin{equation}
 \label{Eqaf10} 
  \til{v}^{\epsilon}_t\ten v_t, \quad \partial_s \til{v}_t^{\epsilon}\ten \partial_s v_t\quad \text{ and }\quad\partial_s^2 \til{v}^{\epsilon}_t\ten \partial_s^2 v_t 
 \end{equation}
 strongly in $L^2(\omega)$ as $\epsilon\ten 0$. 
 
 By \eqref{daffvt}, for every $\epsilon$ we may choose a map $v_s^{\epsilon}\in C^5(\lin{\omega})$ such that $$\partial_s v_s^{\epsilon}=k\til{v}_t^{\epsilon},\quad \partial_1^2 v_s^{\epsilon}=0\quad \text{ and }\quad \myintt{v_s^{\epsilon}}=\myintt{v_s}\quad\text{ in }\omega.$$ 
 The conclusion of the lemma follows now arguing as in Step 1.
 
 The same argument applies to the case where $Z=[0,p_1]$, with $0<p_1<1$, choosing
 $$v_t^{\epsilon}(x_1,s):=v_t(x_1,s+\epsilon)\quad\text{ in }\omega^{\frac{\delta}{2}}$$
 and arguing as in the previous case.
 
 Finally, assume that $Z=[p_1,p_2]\cup[p_3,1]$ with $0<p_1<p_2<p_3<1$. Let $\varphi\in C_0^{\infty}(\mathbb{R})$ with $0\leq \varphi(s)\leq 1$ for every $s\in\mathbb{R}$, $\varphi(s)=1$ for all $s\in [p_2-\eta, p_2+\eta]$ and $\varphi(s)=0$ for $s \leq p_1+\eta$ or $s\geq p_3-\eta$ for some $\eta>0$ such that
 $\eta<\min\{p_1,p_2-p_1,p_3-p_2,1-p_3\}$. The argument shown at the beginning of this step applies now choosing
 $$v_t^{\epsilon}(x_1,s):=(1-\varphi(s))v_t(x_1,s-\epsilon)+\varphi(s)v_t(x_1,s+\epsilon)\quad \text{ in }\omega^{\frac{\delta}{2}}$$ for $\epsilon$ small enough.
 
 The case where $Z$ is a finite union of disjoint intervals is a simple adaptation of the previous cases. 
  \\{\bf Step 3.}\\
 Consider now the general case and assume there exist $I_1\subset\{1,\cdots,m\}$, $I_2\subset\{0,\cdots,m\}$ disjoint such that
 $$Z=\bigcup_{i\in I_1}{[p_{i-1},p_i]}\cup\bigcup_{i\in I_2}\{p_i\}.$$
 Then $\partial_1^2 v_t=0$ a.e. in $(0,L)\setminus\Big(\bigcup_{i\in I_1}{[p_{i-1},p_i]}\Big)$ and the thesis follows arguing as in Step 2. 
 \end{proof}
\section{Compactness results}
\label{compactness}
\noindent In this section we deduce some compactness properties for sequences of deformations $(y^h)$ satisfying the uniform energy estimate \eqref{conditionenergy}.

Assumption (H4) on $W$ provides us
with a control on the $L^2$ distance of the rescaled gradients from $SO(3)$.
Applying Theorem \ref{fjm} on a scale of order $\delta_h$, we can construct a sequence of approximating rotations $(R^h)$, whose
$L^2$ distance from the rescaled gradients is still of order $\epsilon_h$. 
Because of the different scaling of the cross-section diameter and the cross-section thickness the approximating rotations turn out to depend both on the mid-fiber coordinate $x_1$ and on the arc-length coordinate s. Moreover, the derivatives of $(R^h)$ in the two variables have a different order of decay, as $h\ten 0$. 

More precisely, we have the following result.
\begin{teo}
\label{apr}
 Assume that $\frac{\epsilon_h}{\delta_h}\ten 0$. Let $(y^h)$ be a sequence
of deformations in $ W^{1,2}(\Omega;\mathbb{R}^3)$ satisfying
\eqref{conditionenergy}.
Then, there exists a sequence of constant rotations $(P^h)$ and a sequence $(R^h)\subset C^{\infty}(\lin{\omega}; \mathbb{M}^{3 \times
3})$ with the following properties: setting \mbox{$Y^h:=(P^h)^Ty^h-c^h,$} where $(c^h)$ is any sequence of constants in $\mathbb{R}^3$, for every $h>0$ we have
\begin{eqnarray}
\label{apr6}&&
\|\nabla_{h,\delta_h}Y^h R_0^T-R^h\|_{L^2(\Omega)}\leq C\epsilon_h,\\
\label{skewavg}&&
\myintom{\Big(\nabla_{h,\delta_h}Y^h R_0^T-(\nabla_{h,\delta_h}Y^h R_0^T)^T\Big)}=0,\\
\label{apr1}
  &&R^h(x_1,s)\in SO(3) \text{ for every }(x_1,s)\in \lin{\omega},\\
\label{apr3}
&&\|R^h-Id\|_{L^2(\omega)}\leq C\frac{\epsilon_h}{\delta_h},\\
\label{apr4}
&&\|\partial_1 R^h\|_{L^2(\omega)}\leq C \frac{\epsilon_h}{\delta_h},\\
\label{apr5}
&&\|\partial_{s} R^h\|_{L^2(\omega)}\leq C \frac{h\epsilon_h}{\delta_h}.
\end{eqnarray}
\end{teo}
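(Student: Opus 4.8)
The plan is to run a two--scale version of the Friesecke--James--M\"uller construction, the two scales being the cell size $\delta_h$ in the $x_1$ direction and the cell size $\delta_h/h$ in the $s$ direction. First I would combine the energy bound \eqref{conditionenergy} with hypothesis (H4) and with \eqref{uncon} (which gives $\frac{h-\delta_h t k}{h}\geq\frac12$ for $h$ small) to obtain $\|\dist(\nabla_{h,\delta_h}y^h R_0^T,SO(3))\|_{L^2(\Omega)}\leq C\epsilon_h$. Then I would tile $\omega=(0,L)\times(0,1)$ by rectangles $I_i\times J_j$ with $|I_i|\sim\delta_h$ and $|J_j|\sim\delta_h/h$ (the cells touching $\partial\omega$ taken slightly larger so that $\omega$ is exactly covered). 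Writing $u^h:=y^h\circ(\psi^h)^{-1}$, so that $\nabla_{h,\delta_h}y^h R_0^T=\nabla u^h\circ\psi^h$ (the identity behind \eqref{defjh}), the sets $\psi^h\big(I_i\times J_j\times\big(-\tfrac12,\tfrac12\big)\big)$, after the dilation bringing them to unit size, are maps with gradient $\big(e_1\,\big|\,(1-\tfrac{\delta_h}{h}tk)\tau\,\big|\,n\big)$; since $\delta_h/h\ten 0$ and $k$ is bounded, these are uniform bi--Lipschitz images of the unit cube, so Theorem \ref{fjm} and Remark \ref{ofjm} yield rotations $\bar R_{ij}\in SO(3)$ with an $(i,j,h)$--independent constant. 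Changing variables $x=\psi^h$ (Jacobian comparable to $h\delta_h$ on each cell) and summing over the cells, the piecewise constant field $\bar R^h$ that equals $\bar R_{ij}$ on $I_i\times J_j$ satisfies $\|\nabla_{h,\delta_h}y^h R_0^T-\bar R^h\|_{L^2(\Omega)}\leq C\epsilon_h$.

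Next I would compare neighbouring cells. Applying Theorem \ref{fjm} on the union of two adjacent cells (again a uniform bi--Lipschitz image of a cube) and using the triangle inequality gives, for adjacent cells,
$$|\bar R_{ij}-\bar R_{i'j'}|^2\leq\frac{Ch}{\delta_h^2}\,\big\|\dist(\nabla_{h,\delta_h}y^h R_0^T,SO(3))\big\|^2_{L^2(\text{the two cells})},$$
so that summation over all adjacent pairs gives $\sum|\bar R_{ij}-\bar R_{i'j'}|^2\leq Ch\epsilon_h^2/\delta_h^2$. Mollifying $\bar R^h$ (extended to a neighbourhood of $\omega$ by reflection) with an anisotropic kernel of widths $\sim\delta_h$ in $x_1$ and $\sim\delta_h/h$ in $s$ produces $\hat R^h\in C^\infty(\lin\omega;\mthree)$, and the distinct weights $\delta_h$ and $\delta_h/h$ carried by the $x_1$-- and $s$--increments turn the single bound above into
$$\|\bar R^h-\hat R^h\|_{L^2(\Omega)}\leq C\epsilon_h,\qquad\|\partial_1\hat R^h\|_{L^2(\omega)}\leq C\frac{\epsilon_h}{\delta_h},\qquad\|\partial_s\hat R^h\|_{L^2(\omega)}\leq C\frac{h\epsilon_h}{\delta_h}.$$

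To make $\hat R^h$ take values in $SO(3)$, I observe that at any point $\dist(\hat R^h,SO(3))$ is bounded by the oscillation of $\bar R^h$ over the $O(1)$ cells entering the mollification, hence by $\max|\bar R_{ij}-\bar R_{i'j'}|\leq C\sqrt h\,\epsilon_h/\delta_h$, which tends to $0$ since $\epsilon_h/\delta_h\ten 0$. Thus for $h$ small $\hat R^h$ lies in a fixed tubular neighbourhood of $SO(3)$, on which the nearest--point retraction $\pi$ is smooth, and $R_*^h:=\pi\circ\hat R^h\in C^\infty(\lin\omega;SO(3))$ inherits the gradient bounds (as $D\pi$ is bounded); moreover $\|\dist(\hat R^h,SO(3))\|_{L^2(\omega)}\leq\|\hat R^h-\bar R^h\|_{L^2(\omega)}\leq C\epsilon_h$ gives $\|\nabla_{h,\delta_h}y^h R_0^T-R_*^h\|_{L^2(\Omega)}\leq C\epsilon_h$. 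Finally, Poincar\'e--Wirtinger on $\omega$ and the gradient bounds give $\big\|R_*^h-\fint_\omega R_*^h\big\|_{L^2(\omega)}\leq C\epsilon_h/\delta_h$, whence $A^h:=\fint_\Omega\nabla_{h,\delta_h}y^h R_0^T$ has distance $\leq C\epsilon_h/\delta_h\ten 0$ from $SO(3)$. Writing its polar decomposition $A^h=P^hS^h$ with $P^h\in SO(3)$ and $S^h$ symmetric, I would set $Y^h:=(P^h)^T y^h-c^h$ and $R^h:=(P^h)^T R_*^h$: left multiplication by the constant rotation $(P^h)^T$ preserves membership in $SO(3)$ and all $L^2$--norms, so \eqref{apr6}, \eqref{apr1}, \eqref{apr4} and \eqref{apr5} follow from the bounds on $R_*^h$; \eqref{skewavg} holds because $(P^h)^T A^h=S^h$ is symmetric; and \eqref{apr3} follows since $(P^h)^T\fint_\omega R_*^h$ is within $C\epsilon_h/\delta_h$ of $Id$.

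I expect the main obstacle to be the uniform ($L^\infty$) smallness of $\dist(\hat R^h,SO(3))$, which is needed so that the projection $\pi$ is globally defined: this relies on the sharp rigidity estimate on the $\delta_h$--cubes and, jointly with it, on the careful bookkeeping of the two scales $\delta_h$ and $\delta_h/h$, which is exactly what produces an $s$--derivative bound a factor $h$ smaller than the $x_1$--derivative bound. A secondary technical point is verifying that the bi--Lipschitz constants of the cells and of their pairwise unions stay bounded as $h\ten 0$, so that Remark \ref{ofjm} can be invoked with an $h$--independent rigidity constant.
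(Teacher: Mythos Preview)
Your proposal is correct and follows essentially the same route as the paper: two--scale tiling of $\omega$ by rectangles of size $\delta_h\times\delta_h/h$, application of Theorem~\ref{fjm} with a uniform constant (justified via Remark~\ref{ofjm} exactly as you outline), comparison of rotations on neighbouring cells, anisotropic mollification producing the two different derivative bounds \eqref{apr4}--\eqref{apr5}, the $L^\infty$ estimate $\dist(\hat R^h,SO(3))\leq C\sqrt{h}\,\epsilon_h/\delta_h\to 0$ to justify the smooth retraction onto $SO(3)$, and polar decomposition to enforce \eqref{skewavg}. The paper differs only in cosmetic details: it extends $\bar R^h$ by constants rather than by reflection, it compares rotations using $3\times3$ blocks $B_h^i$ rather than pairs of cells, and it performs the final normalization in two stages (first a rotation $S^h$ close to $\fint_\omega R_*^h$, then a second polar decomposition of $\fint_\Omega\nabla_{h,\delta_h}\hat y^h R_0^T$) whereas you do it in a single polar decomposition of $A^h=\fint_\Omega\nabla_{h,\delta_h}y^h R_0^T$; your one--step version is a mild streamlining of the paper's argument.
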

\begin{proof}
By \eqref{conditionenergy} and (H4), the sequence $(y^h\comp(\psi^h)^{-1})$ satisfies
\begin{equation}
\label{limen}
\int_{\Omega_h}{\dist^2(\nabla(y^h\comp(\psi^h)^{-1}), SO(3))dx}\leq Ch\delta_h\epsilon_h^2.
\end{equation}

Let us consider the sets
\begin{multline}
\nonumber
A_h^i:=\Big\{x_1 e_1+h\gamma(s)+\delta_h t
n(s): x_1\in\big(\tfrac{i_1L}{\eta_h},\tfrac{(i_1+1)L}{
\eta_h}\big),\,\\s\in
\big(\tfrac{i_2}{k_h},\tfrac{(i_2+1)}{k_h}\big),\, t\in\big(-\tfrac{1}{2},\tfrac{1}{2}\big)\Big\},
\end{multline}
where $\eta_h=\big[\frac{L}{\delta_h}\big]$, $k_h=\big[\frac{h}{\delta_h}\big]$ and $i=(i_1,i_2)$, with
\mbox{$i_1=0,\cdots,\eta_h-1$},
 \mbox{$i_2=0,\cdots,k_h-1$}. By Theorem \ref{fjm} and Remark \ref{ofjm} there exist a sequence of constant rotations $(\lin{Q}^i_h)\subset SO(3)$ and a 
constant $C$ independent of $h$ and $i$ satisfying
\begin{equation}
\label{ex}
\int_{A_h^i}{|\nabla (y^h\comp(\psi^h)^{-1})-\lin{Q}^i_h|^2 dx}\leq
C\int_{A_h^i}{\dist^2(\nabla (y^h\comp(\psi^h)^{-1}),SO(3)) dx}.
\end{equation}
To see that $C$ does not depend on $h$, we first notice that each set $A^i_h$
has the same rigidity constant of the set $\til{A}^i_h$ that is obtained by a
uniform dilation of $A^i_h$ of factor $\frac{1}{\delta_h}$.
Defining $\phi^i_h:(0,1)^3\longrightarrow \til{A}^i_h$ as
$$\phi^i_h(x_1,s,t)=\big(
\tfrac{(i_1+x_1)L}{\eta_h
\delta_h},\,\tfrac{h}{\delta_h}\gamma\big(\tfrac{i_2+s}{k_h}\big)
+\big(t-\tfrac{1}{2}\big)n\big(\tfrac{i_2+s}{k_h}\big)\big),$$ we
conclude that the sets $\til{A}^i_h$ are the image of the unitary cube through a family of
uniformly bi-Lipschitz transformations. Therefore by Remark \ref{ofjm} the constant $C$ is the same
for every $i$ and for every $h$.

 Let \mbox{$Q^h:\omega\longrightarrow SO(3)$} be the
piecewise 
constant map given by $Q^h(x_1,s):=\lin{Q}_h^i$ for $(x_1,s)\in \big(\frac{i_1L}{\eta_h},\frac{(i_1+1)L}{
\eta_h}\big)\times \big(\frac{i_2}{k_h},\frac{i_2+1}{k_h}\big)$ where $i_1=0,\cdots,\eta_h-1$ and $i_2=0,\cdots,k_h-1$.
Summing \eqref{ex} over $i$, changing variables and using \eqref{uncon}, we deduce that
\begin{equation}
\label{1stima}
\int_{\Omega}{|\nh y^h R_0^T-Q^h|^2 dx}\leq C\int_{\Omega}{\dist^2(\nh y^h R_0^T, SO(3))dx.}\leq C\epsilon_h^2.
\end{equation}
 
 Consider the set
 \begin{multline}
 \nonumber
 B_h^i:=\Big\{x_1 e_1+h\gamma(s)+\delta_h t
n(s): x_1\in\big((i_1-1)\tfrac{L}{\eta_h},(i_1+2)\tfrac{L}{
\eta_h}\big),\,\\s\in
\big((i_2-1)\tfrac{1}{k_h},(i_2+2)\tfrac{1}{k_h}\big),\, t\in\big(-\tfrac{1}{2},\tfrac{1}{2}\big)\Big\},
\end{multline}
for $i_1=1,\cdots,\eta_h-2$, $i_2=1,\cdots,k_h-2$, and for every $h>0$.
Applying the rigidity estimate to the sets $B^i_h$ we obtain that for every $(i_1,i_2)$ there exists a map $\capp{Q}_h^i\subset SO(3)$ satisfying
$$\int_{B_h^i}{|\nabla (y^h\comp(\psi^h)^{-1})-\capp{Q}^i_h|^2 dx}\leq
C\int_{B_h^i}{\dist^2(\nabla (y^h\comp(\psi^h)^{-1}),SO(3)) dx}.$$
Let now $j_k$ be an integer in the set $\{i_k-1,i_k,i_k+1\}$, $k=1,2$ and let $j=(j_1,j_2)$. Since $A^j_h\subset B^i_h$, there holds
\begin{multline}
\cal{L}^3(A^j_h)\big|Q^h\big(\tfrac{j_1L}{\eta_h},
\tfrac{j_2}{k_h}\big)-
\capp{Q}_h^i\big|^2
\leq
2\int_{A_h^j}{\big|Q^h\big(\tfrac{j_1L}{\eta_h},\tfrac{j_2}{
k_h}\big)-\nabla (y^h\comp(\psi^h)^{-1})\big|^2 dx}\\ 
\label{darichiamare} + 2\int_{B_h^i}{|\nabla (y^h\comp(\psi^h)^{-1})-\capp{Q}_h^i|^2 dx}\leq C\int_{B_h^i}{\dist^2(\nabla
(y^h\comp(\psi^h)^{-1}), SO(3)) dx}.
 \end{multline}
Then, by \eqref{limen} we have
\begin{equation}
\label{ptwq}
\cal{L}^3(A^i_h)\big|Q^h\big(\tfrac{(i_1\pm 1)L}{\eta_h},
\tfrac{i_2\pm 1}{k_h}\big)-
{Q}_h\big(\tfrac{i_1L}{\eta_h},
\tfrac{i_2}{k_h}\big)\big|^2
\leq Ch\delta_h\epsilon_h^2,
\end{equation}
for any $i_1=1,\cdots \eta_h-1, i_2=1,\cdots k_h-1$.

We first extend the map $Q^h$ to the strip $\mathbb{R}\times (0,1)$ by setting
\begin{equation}
\nonumber
Q^h(x_1,s)=\begin{cases}
Q^h(0,s) &\text{ if }(x_1,s)\in (-\infty,0)\times (0,1),\\
Q^h(L,s) &\text{ if }(x_1,s)\in (L,+\infty)\times (0,1),\\
\end{cases}
\end{equation}
and then to the whole $\mathbb{R}^2$ by
\begin{equation}
\nonumber
Q^h(x_1,s)=\begin{cases}
Q^h(x_1,0) &\text{ if }(x_1,s)\in \mathbb{R}\times(-\infty,0),\\
Q^h(x_1,1) &\text{ if }(x_1,s)\in \mathbb{R}\times(1,+\infty).
\end{cases}
\end{equation}
Since $Q^h$ is constant in each set $A^i_h$, inequality \eqref{ptwq} yields\begin{equation}
 \label{pointvariationQ}
|Q^h(x_1+\xi,s+\lambda)-Q^h(x_1,s)|^2\leq C
\frac{h\epsilon_h^2}{\delta_h^2}
\end{equation}
for every \mbox{$(x_1,s)\in \omega$} and for \mbox{$|\xi|\leq \frac{L}{\eta_h}$},
\mbox{$|\lambda|\leq\frac{1}{k_h}$}. 
Moreover, since $Q^h$ is piecewise constant, \eqref{darichiamare} and \eqref{ptwq} imply\begin{multline}
\label{2darich}
\int_{\big(\frac{i_1L}{\eta_h},\frac{(i_1+1)L}{\eta_h}\big)\times\big(\frac{i_2}{k_h},\frac{i_2+1}{
k_h}\big)}
{|Q^h(x_1+\xi,s+\lambda)-Q^h(x_1,s)|^2 dx_1 ds}\\
\leq
\frac{C}{h\delta_h}\int_{B^i_h}{\dist^2\big(\nabla(y^h\comp(\psi^h)^{-1}),SO(3)
\big) } ,
\end{multline}
for every $i_1=1,\cdots ,\eta_h-2$, $i_2=1,\cdots ,k_h-2$. 

Let now $\omega'\subset\subset \omega$. For $h$ small enough, there holds
$\omega'\subset \big(\frac{L}{\eta_h},L-\frac{L}{\eta_h}\big)\times\big(\frac{1}{\eta_h},1-\frac{1}{\eta_h}\big).$
Hence, by \eqref{limen} and \eqref{2darich}, since $x\in \Omega_h$ belongs to at most $9$ sets of
the form $B_h^i$, summing over the $i_k's$, we deduce
\begin{equation}
\label{integralestimaterot}
\int_{\omega'}
{|Q^h(x_1+\xi,s+\lambda)-Q^h(x_1,s)|^2 dx_1ds}\leq C \epsilon_h^2,
\end{equation}
for all $|\xi|\leq \delta_h$, $|\lambda|\leq \frac{\delta_h}{h}$.

To obtain a $C^{\infty}$ sequence of rotations, we regularize $(Q^h)$ by means of convolution kernels.
Let $\eta\in C^{\infty}_{0}(0,1)$, $\eta\geq 0$, $\int_{0}^{1}{\eta(s)ds}=1$.
We define
$\varphi^h(\xi,\lambda):=\frac{h}{\delta_h^2}\eta\big(\frac{\xi}{\delta_h}
\big)\eta\big(\frac{h\lambda}{\delta_h}\big)$ for every $\xi\in(0,\delta_h),\lambda\in\big(0,\frac{\delta_h}{h}\big)$ and we notice that, for $h$ small enough, supp $\varphi^h$ is contained into a ball whose radius is smaller than the distance between $\omega'$ and the boundary of $\omega$.

Setting $\til{Q}^h:=Q^h\ast \varphi^h$, by Holder inequality and \eqref{integralestimaterot} we have  
\begin{equation}
\nonumber
\int_{\omega'}{|\til{Q}^h(x_1,s)-Q^h(x_1,
s)|^2 dx_1ds}\leq C\epsilon_h^2,
\end{equation}
which implies that
\begin{equation}
\label{rm}
\|\til{Q}^h-Q^h\|_{L^2(\omega)}\leq C\epsilon_h
\end{equation}
since the constant $C$ does not depend on the choice of $\omega'$.
Analogously we
obtain
\begin{equation}
\label{b1}
\|\partial_1 \til{Q}^h\|_{L^2(\omega)}\leq C\frac{\epsilon_h}{\delta_h}
\end{equation}
and
\begin{equation}
\label{b2}
\|\partial_s \til{Q}^h\|_{L^2(\omega)}\leq C \frac{h\epsilon_h}{\delta_h}.
\end{equation}

Finally, let $U$ be a neighbourhood of $SO(3)$ where the projection \mbox{$\Pi:U\longrightarrow SO(3)$} 
is well defined and regular. By \eqref{pointvariationQ}, we deduce
\begin{eqnarray}
\label{auxex} &&|\til{Q}^h(x_1,s)-Q^h(x_1,s)|^2\leq \|\varphi^h\|^2_{L^2\big((0,\delta_h)\times(0,\tfrac{\delta_h}{h})\big)}
\frac{\delta_h^2}{h}\frac{h\epsilon_h^2}{\delta_h^2}\leq
C\frac{h\epsilon_h^2}{\delta_h^2},
\end{eqnarray}
for every $(x_1,s)\in \omega$.
Since $\frac{\epsilon_h}{\delta_h}\ten 0$,  
$\til{Q}^h \in U$ for $h$ small enough and, thus, we can define $\til{R}^h:=\Pi(\til{Q}^h)$. It is immediate to see that, for every $h>0$, $\til{R}^h$ satisfies \eqref{apr1}. Furthermore, by \eqref{b1} and \eqref{b2} and by regularity of $\Pi$, \eqref{apr4} and \eqref{apr5} hold.
By definition of $\til{R}^h$,
\begin{equation}
\label{2stima}
\|\til{R}^h-\til{Q}^h\|_{L^2(\omega)}\leq\|Q^h-\til{Q}
^h\|_{L^2(\omega)}
\end{equation}
therefore \eqref{apr6} follows from \eqref{1stima} and \eqref{rm}.

By Poincar\'e inequality, 
given
$$\lin{R^h}:=\fint_{\omega}{\til{R}
^hdx_1ds},$$ \eqref{apr4} and \eqref{apr5} yield
$$\|\til{R}^h-\lin{R}^h\|_{L^2(\omega)}\leq C\|\nabla \til{R}^h\|_{L^2(\omega)}\leq C\frac{\epsilon_h}{\delta_h}.$$
This implies that $\dist(\lin{R}^h, SO(3))\leq C\frac{\epsilon_h}{\delta_h}$. Hence,
there 
exists a sequence of constant rotations $(S^h)\in SO(3)$ such that
$|\lin{R}^h-S^h|\leq 
C\frac{\epsilon_h}{\delta_h}$, which in turn implies
\begin{equation}
\label{num}
\|\til{R}^h-S^h\|_{L^2(\omega)}\leq C\frac{\epsilon_h}{\delta_h}.
\end{equation}
We define $\hat{R}^h:=(S^h)^T\til{R}^h$ and $\hat{y}^h=(S^h)^T y^h$. By the properties of the sequence $(\til{R}^h)$ and by \eqref{num}, $\hat{R}^h$ satisfies \eqref{apr6} and \eqref{apr1}--\eqref{apr5}. 

To provide a sequence of rotations satisfying also \eqref{skewavg}, we argue as in \cite[Lemma 3.1]{F-M-P} and we introduce the matrices
$$F^h:=\fint_{\Omega}{\nabla_{h,\delta_h}\hat{y}^h R_0^Tdx_1dsdt}.$$
We notice that
\begin{equation}
\label{firstest}|F^h-Id|\leq \fint_{\Omega}{|\nabla_{h,\delta_h}\hat{y}^h R_0^T-Id|dx_1dsdt}\leq C\frac{\epsilon_h}{\delta_h},
\end{equation}
as $\hat{R}^h$ satisfies \eqref{apr6} and \eqref{apr3}.
It turns out that $\det F^h>0$ for $h$ small enough, therefore by polar decomposition Theorem, for every $h$ there exist $P^h\in SO(3)$ and $U^h\in \mthree_{sym}$ such that $$F^h=P^hU^h,$$ and \begin{equation}\label{interest}|U^h-Id|=\dist(F^h,SO(3))\leq |F^h-Id|.\end{equation} The symmetry of $U^h$, together with \eqref{firstest} and \eqref{interest}, yields for any $h>0$\begin{equation}
\label{lastest}
|P^h-Id|\leq |P^h-U^h|+|U^h-Id|\leq C|F^h-Id|\leq C\frac{\epsilon_h}{\delta_h}.
\end{equation} 
Defining $R^h:=(P^h)^T\hat{R}^h$ and $Y^h:=(P^h)^T\hat{y}^h$, then \eqref{apr6}, \eqref{apr1}, \eqref{apr4} and \eqref{apr5} follow immediately. Moreover, since
$$\|R^h-Id\|_{L^2(\omega)}\leq \|R^h-\hat{R}^h\|_{L^2(\omega)}+\|\hat{R}^h-Id\|_{L^2(\omega)}\leq C(\|P^h-Id\|_{L^2(\omega)}+\|\hat{R}^h-Id\|_{L^2(\omega)}),$$
then \eqref{apr3} holds due to \eqref{lastest} and from the fact that $\hat{R}^h$ satisfies \eqref{apr3}. Finally, by symmetry of $U^h$, for every $h>0$
$$\myintom{(\nabla_{h,\delta_h}Y^h R_0^T-(\nabla_{h,\delta_h} Y^h R_0^T)^T)}$$$$=\cal{L}^3(\Omega)((P^h)^TF^h-(F^h)^T P^h)=\cal{L}^3(\Omega)(U^h-(U^h)^T)=0,$$ which concludes the proof of \eqref{skewavg} and of the proposition.
\end{proof}

From now on we shall refer to the sequence of deformations $(Y^h)$ introduced in Theorem \ref{apr}, where the constants $c^h$ are chosen in such a way to satisfy 
\begin{equation}
 \label{aver}
\myintom{(Y^h-\psi^h)}=0.
\end{equation}
We introduce the tangential derivative of the tangential displacement, associated with $Y^h$, given by
\begin{equation}
\label{defu}
 g^h(x_1,s,t):=\frac{1}{\epsilon_h}\partial_1(Y^h_1-\psi^h_1),\end{equation}
for a.e. $(x_1,s,t)\in \Omega$, 
and the (averaged) twist function, associated with $Y^h$, given by
\begin{equation}
\label{defw}
 w^h(x_1,s):=\frac{\delta_h}{h\epsilon_h}\myintr{\partial_s (Y^h-\psi^h)\cdot
n},
\end{equation}
for a.e. $(x_1,s)\in\omega$. 

We are now in a position to prove the first compactness result.
\begin{teo}
 \label{disp}
Under the same assumptions of Theorem \ref{apr}, let $(R^h)$ and $(Y^h)$ be the sequences introduced in Theorem \ref{apr}, with $(c^h)$ such that \eqref{aver} holds.
Then
\begin{equation}
\label{ss}
Y^h\ten x_1e_1 \text{ strongly in }W^{1,2}(\Omega;\mathbb{R}^3).
\end{equation}
Let $(g^h)$ and $(w^h)$ be the sequences defined in \eqref{defu} and \eqref{defw}. Then
there exist $g\in L^2(\Omega)$ and $w\in W^{1,2}(0,L)$ such that, up to subsequences, 
\begin{eqnarray}
 \label{u1}
&&g^h\deb g \quad\text{ weakly in }L^2(\Omega) \text{ if }\eqref{l}\text{ holds},\\
\label{w}
&&w^h\ten w \quad\text{ strongly in }L^2(\omega),\\
 \label{convah}
&& A^h:=\frac{\delta_h}{\epsilon_h}(R^h-Id)\deb A \quad\text{ weakly in }W^{1,2}(\omega;\mathbb{M}^{3\times 3}),\\
\label{convgradhdh}&& 
 \frac{\delta_h}{\epsilon_h}(\nabla_{h,\delta_h} Y^h
R_0^T-Id)\ten A \quad\text{ strongly in }L^2(\Omega;\mathbb{M}^{3\times 3}),\\
\label{symmAh}&& \frac{\delta_h^2}{\epsilon_h^2}\,\sym{(R^h-Id)}\ten \frac{A^2}{2} \quad\text{ strongly in }L^2(\omega;\mathbb{M}^{3\times 3}),
\end{eqnarray}
where
\begin{equation}
\label{charA}
 A(x_1)=w(x_1)(e_3\otimes e_2-e_2\otimes e_3)
 \end{equation}
for a.e. $x_1\in (0,L)$. Moreover, $Y^h$ satisfies 
\begin{equation}
\label{symgr}
\|\sym (\nh Y^hR_0^T-Id)\|_{L^2}\leq C\Big(\epsilon_h+\frac{\epsilon_h^2}{\delta_h^2}\Big).
\end{equation}
Finally, there exists $b\in L^2(\omega)$ such that, setting 
\begin{equation}
\label{charB}
 B(x_1,s)=\Bigg(\begin{array}{ccc}
 0 & w'(x_1)\tau_3(s)&-w'(x_1)\tau_2(s)\\
 -w'(x_1)\tau_3(s) & 0 & -b(x_1,s)\\
  w'(x_1)\tau_2(s)&  b(x_1,s)& 0
 \end{array}\Bigg)
\end{equation}
for a.e. $(x_1,s)\in\omega$, we have, up to subsequences,
\begin{equation}
 \label{eqb}\frac{\delta_h}{h\epsilon_h}\partial_{s}R^h\deb B \quad\text{
weakly in }L^2(\omega;\mathbb{M}^{3\times 3}).
\end{equation}
\end{teo}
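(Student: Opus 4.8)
The plan is to deduce everything from Theorem~\ref{apr}, working with the rescaled sequences $A^h:=\tfrac{\delta_h}{\epsilon_h}(R^h-Id)$ and $E^h:=\tfrac{\delta_h}{\epsilon_h}(\nh Y^hR_0^T-Id)$. By \eqref{apr3}--\eqref{apr5}, $(A^h)$ is bounded in $W^{1,2}(\omega;\mthree)$ with $\|\partial_sA^h\|_{L^2}\le Ch\to 0$, so along a subsequence $A^h\rightharpoonup A$ in $W^{1,2}(\omega)$ with $\partial_sA=0$; hence $A=A(x_1)\in W^{1,2}(0,L)$, and $A^h\to A$ strongly in $L^2(\omega)$ and $L^4(\omega)$ by Rellich. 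Expanding $(R^h)^TR^h=Id$ (by \eqref{apr1}) gives $\sym A^h=-\tfrac{\epsilon_h}{2\delta_h}(A^h)^TA^h$, so in the limit $\sym A=0$, i.e.\ $A$ is skew (this is \eqref{convah}), and $\tfrac{\delta_h^2}{\epsilon_h^2}\sym(R^h-Id)=-\tfrac12(A^h)^TA^h\to-\tfrac12A^TA=\tfrac12A^2$, which is \eqref{symmAh}. From \eqref{apr6}, $\tfrac{\delta_h}{\epsilon_h}(\nh Y^hR_0^T-R^h)\to 0$ in $L^2(\Omega)$; adding $A^h\to A$ yields $E^h\to A$ strongly in $L^2(\Omega)$, which is \eqref{convgradhdh}. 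In particular $\nh Y^hR_0^T\to Id$, so $\partial_1Y^h\to e_1$ and $\partial_sY^h,\partial_tY^h\to 0$ in $L^2$ (the second and third columns of $\nh Y^h$ stay bounded while the factors $h-\delta_htk$ and $\delta_h$ vanish); since $\nabla\psi^h\to(e_1\,|\,0\,|\,0)$ and $\psi^h\to x_1e_1$, the normalization \eqref{aver} and Poincar\'e's inequality give \eqref{ss}.

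Estimate \eqref{symgr} follows from Theorem~\ref{apr} alone: $\sym(\nh Y^hR_0^T-Id)=\sym(\nh Y^hR_0^T-R^h)+\sym(R^h-Id)$; the first term is $O(\epsilon_h)$ in $L^2$ by \eqref{apr6}, while $\|\sym(R^h-Id)\|_{L^2}=\tfrac12\|(R^h-Id)^T(R^h-Id)\|_{L^2}\le\tfrac12\|R^h-Id\|^2_{L^4}\le C(\epsilon_h/\delta_h)^2$ by \eqref{apr3}--\eqref{apr5} and $W^{1,2}(\omega)\hookrightarrow L^4(\omega)$. From $\partial_1Y^h=e_1+\tfrac{\epsilon_h}{\delta_h}E^he_1$ one reads $g^h=\tfrac1{\delta_h}(E^h)_{11}=\tfrac1{\delta_h}(\sym E^h)_{11}$, so \eqref{symgr} gives $\|\sym E^h\|_{L^2}\le C(\delta_h+\epsilon_h/\delta_h)$ and then, under \eqref{l}, $\|g^h\|_{L^2}\le C(1+\epsilon_h/\delta_h^2)\le C$; hence $g^h\rightharpoonup g$ in $L^2(\Omega)$, which is \eqref{u1}. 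For the twist, using $\partial_s\psi^h\cdot n=0$ and that the second column of $\nh Y^h$ equals $\tau+\tfrac{\epsilon_h}{\delta_h}E^h\tau$, one rewrites $w^h=\myintr{\tfrac{h-\delta_htk}{h}\,(E^h\tau)\cdot n}$; by \eqref{convgradhdh} and \eqref{uncon} this converges strongly in $L^2(\omega)$ to $w:=(A\tau)\cdot n$, and since $A$ is skew a direct computation gives $(A\tau)\cdot n=A_{32}$, which depends on $x_1$ only and lies in $W^{1,2}(0,L)$: this is \eqref{w}.

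The heart of the argument is the structure of $A$ and $B$. By \eqref{apr5} the sequence $\tfrac{\delta_h}{h\epsilon_h}\partial_sR^h=\tfrac1h\partial_sA^h$ is bounded in $L^2(\omega;\mthree)$, so $\tfrac1h\partial_sA^h\rightharpoonup B$ along a subsequence; arguing on symmetric parts as before, $\sym B=0$, and this is \eqref{eqb}. The link between $A$ and $B$ comes from the curl compatibility $\partial_s(\partial_1Y^h)=(h-\delta_htk)\,\partial_1\!\big(\tfrac{\partial_sY^h}{h-\delta_htk}\big)$: writing $\partial_1Y^h=R^he_1+\rho_1^h$ and $\tfrac{\partial_sY^h}{h-\delta_htk}=R^h\tau+\rho_2^h$ with $\|\rho_i^h\|_{L^2}\le C\epsilon_h$ (by \eqref{apr6}), multiplying by $\tfrac{\delta_h}{h\epsilon_h}$ and passing to the limit in $W^{-1,2}(\Omega)$ — where the $\rho_i^h$ terms are $O(\delta_h/h)\to 0$, $\tfrac{\delta_h}{h\epsilon_h}(\partial_sR^h)e_1=(\tfrac1h\partial_sA^h)e_1\rightharpoonup Be_1$ and $\tfrac{\delta_h}{h\epsilon_h}(h-\delta_htk)(\partial_1R^h)\tau=\tfrac{h-\delta_htk}{h}(\partial_1A^h)\tau\rightharpoonup A'\tau$ — gives $Be_1=A'\tau$ in $L^2(\omega)$. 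Since $B$ is skew, $0=(Be_1)\cdot e_1=(A'\tau)\cdot e_1=A'_{12}\tau_2+A'_{13}\tau_3$ for a.e.\ $s$; because $k\not\equiv 0$ the unit tangent $\tau$ is nonconstant on $(0,1)$, so $A'_{12}=A'_{13}=0$ and the entries $A_{12},A_{13}$ are constant in $x_1$. Finally \eqref{skewavg} gives $\fint_\Omega\mathrm{skew}(\nh Y^hR_0^T)=0$, hence $\fint_0^LA(x_1)\,dx_1=0$, so $A_{12}\equiv A_{13}\equiv 0$; with $\sym A=0$ this leaves $A(x_1)=A_{32}(x_1)(e_3\otimes e_2-e_2\otimes e_3)=w(x_1)(e_3\otimes e_2-e_2\otimes e_3)$, which is \eqref{charA}. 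Substituting back, $Be_1=A'\tau=w'(e_3\otimes e_2-e_2\otimes e_3)\tau=w'\,n$, and combined with $\sym B=0$ this is exactly the form \eqref{charB}, with $b:=B_{32}\in L^2(\omega)$.

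The delicate step is the passage to the limit in the compatibility identity: both sides are only distributions, $R^h$ is merely $W^{1,2}$ with its $x_1$- and $s$-derivatives controlled at the two different orders $\epsilon_h/\delta_h$ and $h\epsilon_h/\delta_h$, and one must check that the error terms $\rho_i^h$, of size $\epsilon_h$, become negligible after division by $\epsilon_h/h$ — which works precisely because $\delta_h/h\to 0$ — and that multiplication by the $O(h)$ factor $h-\delta_htk$ is continuous on $W^{-1,2}(\Omega)$, so that the several weak limits may be taken simultaneously there. Everything else is bookkeeping with the estimates of Theorem~\ref{apr} together with standard weak-convergence arguments; in particular, the rescaled Korn inequality of Theorem~\ref{Korn} is not needed for this statement.
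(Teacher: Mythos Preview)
Your proposal is correct and follows essentially the same route as the paper: bound $A^h$ in $W^{1,2}$ via \eqref{apr3}--\eqref{apr5}, use $(R^h)^TR^h=Id$ for \eqref{symmAh} and the skewness of $A$ and $B$, combine \eqref{apr6} with $A^h\to A$ for \eqref{convgradhdh} and \eqref{ss}, and obtain $Be_1=A'\tau$ from the mixed-derivative identity $\partial_s\partial_1Y^h=\partial_1\partial_sY^h$ tested in $W^{-1,2}$. The only cosmetic differences are that you package things through $E^h$ and invoke $W^{1,2}(\omega)\hookrightarrow L^4(\omega)$ for \eqref{symgr}, whereas the paper cites \eqref{symmAh} directly; and the paper writes the compatibility step as a duality pairing with a test function rather than as an identity between distributional derivatives.
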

\begin{proof}
By properties \eqref{apr3}, \eqref{apr4} and \eqref{apr5}, the sequence $(A^h)$
is uniformly bounded in
$W^{1,2}(\omega;\mathbb{M}^{3\times 3})$. Therefore, there
exists $A\in W^{1,2}(\omega;\mathbb{M}^{3 \times 3})$ such that, up to subsequences, \eqref{convah} holds.
 Since $\|\partial_{s}A^h\|_{L^2}\leq Ch$ by \eqref{apr5}, we have that $A=A(x_1)$.
 
By Sobolev embedding theorems,
convergence of $A^h$ is actually strong in $L^q(\omega;\mthree)$ for every $q\in [1,+\infty)$ and since
\begin{equation}
 \sym\, A^h=-\frac{\epsilon_h}{\delta_h}\frac{(A^h)^T A^h}{2},
\end{equation}
\eqref{symmAh} follows immediately. 

By \eqref{apr6} and by strong convergence
of $(A^h)$ in $L^2$, we obtain \eqref{convgradhdh}. In particular, \mbox{$\partial_1 Y^h\ten e_1$} and \mbox{$\partial_s Y^h,\partial_t Y^h \ten 0$} strongly in $L^2(\Omega;\mathbb{R}^3)$. \eqref{ss} follows now owing to \eqref{aver} and Poincar\'e inequality. Moreover,
$$\|\sym(\nh Y^h R_0^T-Id)\|_{L^2}\leq\|\sym(\nh Y^hR_0^T-R^h)\|_{L^2}+\|\sym(R^h-Id)\|_{L^2}.$$
Hence, \eqref{symgr} holds due to \eqref{apr6} and \eqref{symmAh}.

By \eqref{apr5},
there exists a map $B\in L^2(\omega;\mathbb{M}^{3\times 3})$ satisfying \eqref{eqb}.
Differentiating the identity $$(R^h)^TR^h=Id,$$ we obtain
$$(\partial_{s} R^h)^T (R^h-Id)+(R^h-Id)^T
\partial_{s}R^h=-2\sym \,\partial_{s}R^h.$$
Then, by \eqref{convah} and \eqref{eqb}, we deduce that $B$ is
skew-symmetric.
 
 We claim that
 \begin{equation}
 \label{ab}
 Be_1=A'\tau.
 \end{equation} Indeed, let $\varphi \in W^{1,2}_{0}(\Omega;\mathbb{R}^3)$. Then
 \begin{multline}
\scal{\frac{\delta_h}{h\epsilon_h}\partial_{s}\partial_1(Y^h-\psi^h)}{
\varphi}_{W^{-1,2}\times W^{1,2}_0}=\\
\label{fcolb}-\myintom{\frac{\delta_h}{h\epsilon_h}(\nabla_{h,\delta_h}
Y^h-R^hR_0)e_1\cdot \partial_{s}\varphi }
 +
 \displaystyle{\myintom{\frac{\delta_h}{h\epsilon_h}\partial_{s}R^h
e_1\cdot\varphi }}.
 \end{multline}
 The first term in \eqref{fcolb} is infinitesimal due to \eqref{apr6}, while  \eqref{eqb} yields
$$\myintom{\frac{\delta_h}{h\epsilon_h}\partial_{s}R^h e_1\cdot \varphi }\ten
\myintom{Be_1\cdot\varphi}.$$
 On the other hand, we have 
\begin{multline}
\nonumber
\scal{\frac{\delta_h}{h\epsilon_h}\partial_{s}\partial_1(Y^h-\psi^h)}{
\varphi}_{W^{-1,2}\times W^{1,2}_0}=\\-\myintom{\frac{\delta_h(h-\delta_h t k)}{h\epsilon_h}(\nh Y^h-R_0)e_2\cdot\partial_1\varphi},
\end{multline}
 which in turn gives 
\begin{equation}
\label{dsd1h}
\scal{\frac{\delta_h}{h\epsilon_h}\partial_{s}\partial_1(Y^h-\psi^h)}{
\varphi}_{W^{-1,2}\times W^{1,2}_0}\ten\myintom{A'\tau\cdot\varphi}.
\end{equation}
 owing to \eqref{convgradhdh} and \eqref{uncon}. Combining \eqref{fcolb} and \eqref{dsd1h}, we obtain \eqref{ab}. 
 
 Since $B$ is skew-symmetric, the following equality holds
 $$0=B_{11}(x_1,s)=A'_{12}(x_1)\tau_2(s)+A'_{13}(x_1)\tau_3(s),$$ for a.e. $x_1 \in
(0,L)$ and $s \in (0,1)$. This last condition, together with the assumption that $k$ is not identically zero, implies \begin{equation}\label{constants}A_{12}'=A_{13}'\equiv 0.\end{equation}
On the other hand, by \eqref{skewavg} and \eqref{convgradhdh} we deduce that
$$\myintol{A(x_1)}=0.$$
Hence, $A_{12}=A_{13}=0$.

To conclude the proof of the Theorem, we consider the sequences $(g^h)$ and $(w^h)$. To prove \eqref{u1}, we notice that 
\begin{equation}
\label{deru}
g^h=\frac{1}{\epsilon_h}\Big((\partial_1 Y^h_1-R^h_{11})+(R^h_{11}-1)\Big).
\end{equation}
Since we are assuming that \eqref{l} holds, then by \eqref{apr6} and \eqref{symmAh}, $(g^h)$ is uniformly bounded in $L^2(\Omega)$.
Therefore, there exists $g\in L^2(\Omega)$ such that
\eqref{u1} holds up to subsequences.

 As for the twist function, by \eqref{uncon} and \eqref{convgradhdh}, 
$$\displaystyle{\frac{\delta_h}{h\epsilon_h}\partial_s(Y^h-\psi^h)\ten A\tau \text{ strongly in } L^2(\Omega;\mathbb{R}^3)}$$ therefore \eqref{w} follows. In particular, $w=A_{32}$, hence $w\in W^{1,2}(0,L)$ and \eqref{charA} holds. Finally, by \eqref{ab} we deduce the representation \eqref{charB}. 
\end{proof}
In the next proposition we show further compactness properties of the twist functions $w^h$, under stronger assumptions on the order of decay of $\epsilon_h$ with respect to the cross-sectional thickness $\delta_h$.
\begin{comment}
If in addition we are in the periodic case, then further constraints must be satisfied by the limiting functions $w$ and $b$.
\end{comment} 
\begin{prop}
\label{fpwb}
Under the same assumptions of Theorem \ref{disp}, let $w^h$ and $b$ be the functions introduced in \eqref{defw} and \eqref{charB}. 
If ${\frac{\epsilon_h}{h\delta_h}\ten 0}$, we have
\begin{equation}
\label{cwb}
\frac{1}{h}{\partial_{s}w^h}\deb b \text{ weakly in }W^{-1,2}(\omega).
\end{equation}
 \end{prop}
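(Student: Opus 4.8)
The plan is to reduce the statement to the weak convergence \eqref{eqb} of the scaled tangential derivatives of the approximating rotations, isolating two error terms that vanish thanks to the rigidity estimate \eqref{apr6} and to the extra assumption $\epsilon_h/(h\delta_h)\ten 0$.

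First I would rewrite $w^h$ entirely in terms of $\nh Y^h R_0^T$. Since $\nh\psi^h=R_0$ and $R_0e_2=\tau$, one has $\partial_s(Y^h-\psi^h)=(h-\delta_h t k)(\nh Y^h R_0^T-Id)\tau$, so that, setting $G^h:=\tfrac{\delta_h}{\epsilon_h}(\nh Y^h R_0^T-Id)$,
$$w^h=\myintr{\Big(1-\tfrac{\delta_h t k}{h}\Big)\,G^h\tau\cdot n}.$$
Writing $G^h=E^h+A^h$ with $E^h:=\tfrac{\delta_h}{\epsilon_h}(\nh Y^h R_0^T-R^h)$ and $A^h=\tfrac{\delta_h}{\epsilon_h}(R^h-Id)$ as in Theorem \ref{disp}, and using $\myintr{t}=0$ together with the fact that $A^h,\tau,n$ do not depend on $t$, I obtain the decomposition $w^h=a^h+r^h$, where
$$a^h:=A^h\tau\cdot n,\qquad r^h:=\myintr{\Big(1-\tfrac{\delta_h t k}{h}\Big)E^h\tau\cdot n}.$$
By Jensen's inequality, \eqref{uncon} and \eqref{apr6}, one has $\|r^h\|_{L^2(\omega)}\le C\|E^h\|_{L^2(\Omega)}\le C\delta_h$; since $\partial_s$ maps $L^2(\omega)$ into $W^{-1,2}(\omega)$ with norm $\le 1$, this already yields $\tfrac1h\partial_s r^h\ten 0$ strongly in $W^{-1,2}(\omega)$ because $\delta_h/h\ten 0$.

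It then remains to analyse $\tfrac1h\partial_s a^h$. As $A^h\in C^\infty(\lin{\omega};\mthree)$ and, for an arclength-parametrised planar curve, $\dot\tau=kn$, $\dot n=-k\tau$, a direct differentiation gives
$$\partial_s a^h=(\partial_s A^h)\tau\cdot n+k\big(A^h n\cdot n-A^h\tau\cdot\tau\big).$$
For the first term, \eqref{eqb} gives $\tfrac1h\partial_s A^h=\tfrac{\delta_h}{h\epsilon_h}\partial_s R^h\deb B$ weakly in $L^2(\omega;\mthree)$, while the structure \eqref{charB} of $B$ yields $B\tau=bn$; hence $\tfrac1h(\partial_s A^h)\tau\cdot n\deb B\tau\cdot n=b$ weakly in $L^2(\omega)$. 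For the second term, I would use that $A^h v\cdot v=v^T(\sym A^h)v$ for any $v$, and that from $(R^h)^TR^h=Id$ one has $\sym A^h=-\tfrac{\epsilon_h}{2\delta_h}(A^h)^TA^h$, so that
$$\tfrac1h\big(A^h n\cdot n-A^h\tau\cdot\tau\big)=-\tfrac{\epsilon_h}{2h\delta_h}\big(|A^h n|^2-|A^h\tau|^2\big).$$
Since $(A^h)$ is bounded in $W^{1,2}(\omega)$, hence in $L^4(\omega)$ by the Sobolev embedding (as already observed in the proof of Theorem \ref{disp}), the sequence $|A^h n|^2-|A^h\tau|^2$ is bounded in $L^2(\omega)$; therefore $\tfrac1h\big(A^h n\cdot n-A^h\tau\cdot\tau\big)\ten 0$ strongly in $L^2(\omega)$ by the hypothesis $\epsilon_h/(h\delta_h)\ten 0$, and multiplication by the bounded function $k$ preserves this. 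Collecting the three contributions, $\tfrac1h\partial_s w^h=\tfrac1h\partial_s a^h+\tfrac1h\partial_s r^h\deb b$ weakly in $W^{-1,2}(\omega)$, which is the claim.

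I expect the only genuine subtlety to be the treatment of the curvature correction $\tfrac kh(A^h n\cdot n-A^h\tau\cdot\tau)$: one must exploit the quadratic constraint $\sym A^h=-\tfrac{\epsilon_h}{2\delta_h}(A^h)^TA^h$ coming from $R^h\in SO(3)$, combined with the embedding $W^{1,2}(\omega)\hookrightarrow L^4(\omega)$, in order to produce the extra factor $\epsilon_h/\delta_h$ that, against the $1/h$ weight, is annihilated precisely by the assumption $\epsilon_h/(h\delta_h)\ten 0$. The rest is routine bookkeeping with the bounds already furnished by Theorems \ref{apr} and \ref{disp}.
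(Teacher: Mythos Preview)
Your proof is correct and follows essentially the same route as the paper's. You split $w^h$ into the rotation contribution $a^h=A^h\tau\cdot n$ and the remainder $r^h$ coming from $\nh Y^hR_0^T-R^h$, exactly as the paper does in \eqref{connwb}; you then differentiate $a^h$ via the Frenet relations and control the curvature correction $\tfrac{k}{h}(A^hn\cdot n-A^h\tau\cdot\tau)$ through the quadratic identity $\sym A^h=-\tfrac{\epsilon_h}{2\delta_h}(A^h)^TA^h$ combined with the $L^4$ bound on $A^h$, which is precisely the content of \eqref{symmAh} invoked by the paper.
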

\begin{proof}
Assume that ${\frac{\epsilon_h}{h\delta_h}\ten 0}$. By definition of the functions $w^h$, we have 
\begin{equation}
\label{connwb}
\frac{1}{h}\partial_{s}w^h=\frac{\delta_h}{h^2\epsilon_h}\partial_s \myintr{(\nh Y^h-R^hR_0)e_2\cdot n(h-\delta_h t k)}+\frac{\delta_h}{h\epsilon_h}\partial_s((R^h-Id)\tau\cdot n).
\end{equation} 
By \eqref{uncon} and \eqref{apr6}, the first term on the right-hand side of \eqref{connwb} converges to zero strongly in \mbox{$W^{-1,2}(\omega)$}. The second term can be further decomposed as
\begin{eqnarray}
\nonumber && \frac{\delta_h}{h\epsilon_h}\partial_s((R^h-Id)\tau\cdot n)=\frac{\delta_h}{h\epsilon_h}\partial_s R^h \tau\cdot n+\frac{\delta_h}{h\epsilon_h}(R^h n\cdot n-R^h\tau\cdot\tau)k.
\end{eqnarray} Hence, \eqref{cwb} follows from \eqref{symmAh}, \eqref{charB} and \eqref{eqb}. \end{proof}
\section{Characterization of the limit strain and liminf inequality}
\label{characterization}
\noindent In this section we shall prove a liminf inequality for the rescaled energies $\frac{1}{\epsilon_h^2}\cal{J}^h$ defined in \eqref{defjh}.  To this purpose we introduce the strains:
\begin{equation} 
\label{Gh}
 G^h:=\frac{1}{\epsilon_h}\big((R^h)^T \nabla_{h,\delta_h}Y^h R_0^T-Id\big),
\end{equation}
where $(R^h)$ and $(Y^h)$ are the sequences introduced in Theorem \ref{apr},
and we prove their convergence to a limit strain $G$.
In Theorem \ref{deep} we deduce a characterization of $G$, together with some further properties of the limit functions $g$, $w$, and $b$ introduced in \eqref{u1}, \eqref{w}, and \eqref{charB}.

We begin with a characterization of $g$.
\begin{prop}
\label{ging}
Under the same assumptions of Theorem \ref{disp}, let \eqref{l} be satisfied. Let $g$ be the function introduced in \eqref{u1} and let $\cal{G}$ be the class defined in \eqref{classg}. Then $g\in\cal{G}$.
\end{prop}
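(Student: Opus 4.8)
The plan is to apply Lemma~\ref{scriverelemma}: it suffices to produce a sequence $(v^h)\subset W^{1,2}(\omega;\mathbb{R}^3)$ with
\[
e(v^h)\deb\left(\begin{smallmatrix}g&0\\0&0\end{smallmatrix}\right)\qquad\text{weakly in }L^2(\omega;\mathbb{M}^{2\times 2}_{\sym}).
\]
Write $E^h:=\nh Y^hR_0^T-Id$, so that $\nh Y^h=(Id+E^h)R_0$; then $\|\sym E^h\|_{L^2(\Omega)}\le C(\epsilon_h+\epsilon_h^2/\delta_h^2)\le C\epsilon_h$ by \eqref{symgr} and \eqref{l}, while $\frac{\delta_h}{\epsilon_h}E^h\to A$ strongly in $L^2(\Omega)$ by \eqref{convgradhdh}, with $A=w(e_3\otimes e_2-e_2\otimes e_3)$ as in \eqref{charA}. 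As a preliminary step I would check that $g$ does not depend on $t$: from $\nh Y^h=(Id+E^h)R_0$ one has $\partial_t Y^h_1=\delta_h\,e_1\cdot E^h n$ a.e., so integrating by parts twice yields $\scal{\partial_t g^h}{\varphi}=-\frac{\delta_h}{\epsilon_h}\myintom{(e_1\cdot E^h n)\,\partial_1\varphi}$ for every $\varphi\in C^\infty_0(\Omega)$; since $\frac{\delta_h}{\epsilon_h}E^h\to A$ strongly in $L^2(\Omega)$ and the first row of $A$ vanishes, the right-hand side tends to $0$, and as $g^h\deb g$ in $L^2(\Omega)$ this forces $\partial_t g=0$. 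Hence $g\in L^2(\omega)$ and $g=\myintr{g}$.

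Next I would set
\begin{gather*}
v^h:=v^h_1\,e_1+v^h_\tau\,\tau+v^h_n\,n,\qquad v^h_1:=\tfrac1{\epsilon_h}\myintr{(Y^h_1-x_1)},\\
v^h_\tau:=\tfrac h{\epsilon_h}\myintr{(Y^h-\psi^h)\cdot\tau},\qquad v^h_n:=\tfrac h{\epsilon_h}\myintr{(Y^h-\psi^h)\cdot n},
\end{gather*}
which lies in $W^{1,2}(\omega;\mathbb{R}^3)$ because $Y^h\in W^{1,2}(\Omega;\mathbb{R}^3)$ and $\tau,n\in C^5([0,1])$ (its norm may diverge, but this is immaterial for Lemma~\ref{scriverelemma}). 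The point of the mismatched scalings — $1/\epsilon_h$ on the axial component but $h/\epsilon_h$ on the cross-sectional ones — is precisely that this choice keeps $e(v^h)$ bounded. Using $\nh Y^h=(Id+E^h)R_0$ together with $\partial_s\psi^h\cdot\tau=h-\delta_h tk$, $\dot\tau=kn$ and $\dot n=-k\tau$, a direct computation yields
\begin{gather*}
\partial_1 v^h_1=\myintr{g^h},\qquad \partial_s v^h\cdot\tau=\partial_s v^h_\tau-k\,v^h_n=\tfrac h{\epsilon_h}\myintr{(h-\delta_h tk)\,\tau\cdot(\sym E^h)\tau},\\
\partial_s v^h_1+\partial_1 v^h\cdot\tau=\tfrac{2h}{\epsilon_h}\myintr{e_1\cdot(\sym E^h)\tau}-\tfrac{\delta_h}{\epsilon_h}\,k\,\myintr{t\,(e_1\cdot E^h\tau)}.
\end{gather*}

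Finally I would let $h\to 0$. By the first identity and $g^h\deb g$ in $L^2(\Omega)$, $\partial_1 v^h_1\deb\myintr{g}=g$ weakly in $L^2(\omega)$. In the two remaining identities, every summand containing the factor $\sym E^h$ has $L^2(\omega)$-norm at most $C\frac h{\epsilon_h}\|\sym E^h\|_{L^2(\Omega)}\le Ch\to 0$, hence disappears in the limit; and the last summand tends to $0$ strongly in $L^2(\omega)$, since $\frac{\delta_h}{\epsilon_h}\,e_1\cdot E^h\tau=e_1\cdot(\frac{\delta_h}{\epsilon_h}E^h)\tau\to e_1\cdot A\tau=0$ strongly in $L^2(\Omega)$ by \eqref{convgradhdh}. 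Therefore $e(v^h)\deb\left(\begin{smallmatrix}g&0\\0&0\end{smallmatrix}\right)$ weakly in $L^2(\omega;\mathbb{M}^{2\times 2}_{\sym})$, and Lemma~\ref{scriverelemma} gives $g\in\mathcal{G}$. I expect the main obstacle to be this bookkeeping: the summands $\partial_s v^h_1$ and $\partial_1 v^h\cdot\tau$ are individually of order $h/\delta_h$ and blow up, and only after adding them — which turns $E^h$ into $\sym E^h$ and thus lets the refined estimate \eqref{symgr} be used — do they cancel down to an infinitesimal quantity; guessing the three scalings for which this cancellation takes place is the crux of the argument.
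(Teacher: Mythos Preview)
Your proof is correct and follows essentially the same approach as the paper: both build the sequence $v^h$ with the axial component scaled by $1/\epsilon_h$ and the cross-sectional components scaled by $h/\epsilon_h$, then use \eqref{symgr} to show $e(v^h)$ converges to $\left(\begin{smallmatrix}g&0\\0&0\end{smallmatrix}\right)$ and invoke Lemma~\ref{scriverelemma}. Your $v^h_\tau\,\tau+v^h_n\,n$ is exactly the paper's $\frac{h}{\epsilon_h}\myintr{(Y^h_2-\psi^h_2)e_2+(Y^h_3-\psi^h_3)e_3}$ written in the moving frame, so the two constructions coincide; your explicit verification that $g$ is $t$-independent is a welcome addition that the paper leaves implicit at this point.
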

\begin{proof}
Let $(Y^h)$ be as in Theorem \ref{disp}. For every $h>0$ let 
$$v^h:=\frac{1}{\epsilon_h}\myintr{(Y^h_1-x_1)e_1}+\frac{h}{\epsilon_h}\myintr{\big((Y_2^h-\psi_2^h)e_2+(Y_3^h-\psi_3^h)e_3\big)}.$$
By definition, $v^h\in W^{1,2}(\omega;\mathbb{R}^3)$ for every $h>0$; moreover by \eqref{symgr}, we have
$$\Big\|\frac{h}{\epsilon_h}\partial_s (Y^h-\psi^h)\cdot \tau\Big\|_{L^2(\Omega)}=\Big\|\frac{h(h-\delta_h t k)}{\epsilon_h}(\nh Y^h R_0^T-Id)\tau\cdot\tau\Big\|_{L^2(\Omega)}\leq Ch^2,$$
which implies
\begin{equation}
\nonumber
\partial_s {v}^h\cdot \tau \ten 0 \quad\text{ strongly in }L^2(\omega).
\end{equation} 
 Similarly, by \eqref{symgr} we deduce 
\begin{equation}
\nonumber
\partial_s v_1^h+\partial_1 {v}^h\cdot \tau \ten 0\quad \text{ strongly in }L^2(\omega).
\end{equation}  
By \eqref{defu} and \eqref{u1} we also have
$$\partial_1 v^h_1\deb g \text{ weakly in }L^2(\omega).$$
The thesis follows now by Lemma \ref{scriverelemma}.
\end{proof}
  We are now in a position to state the first theorem of this section. For any $M\in\mthree$ we use the notation $M_{tan}$ to denote the matrix
$$M_{tan}:=(e_1|\tau)^T(Me_1|M\tau).$$
 \begin{teo}
\label{deep}
Let the assumptions of Theorem \ref{apr} be satisfied. Assume in addition \eqref{l}. Let $(Y^h)$ and $(R^h)$ be as in Theorem \ref{disp} and let $G^h$ be defined as in \eqref{Gh}. Then there
exists $G\in L^2(\Omega;\mathbb{M}^{3\times 3})$ such that, up to subsequences,
\begin{equation}
 \label{conGh}
G^h\deb G \text{ weakly in }L^2(\Omega;\mthree).
\end{equation}
Let $g,w,b$  be the maps introduced in \eqref{u1}, \eqref{w}, and \eqref{charB}.  Then
\begin{equation}
\label{tan}
 G_{tan}(x_1,s,t)=-t\Bigg (\begin{array}{cc}0&w'(x_1)\\w'(x_1)&b(x_1,s)\end{array}\Bigg)+G_{tan}(x_1,s,0)
\end{equation}
for a.e. $(x_1,s,t)\in \Omega$ and
\begin{equation}
\label{g11u}
(G_{tan})_{11}=G_{11}=g
\end{equation}
a.e. in $\Omega$.

If in addition \eqref{ls} holds, then: 
\begin{enumerate}[a)]
\item if $\mu=+\infty$, there exist $\alpha_1,\alpha_2,\alpha_3\in L^2(0,L)$ such that
\begin{equation}
\label{dsg}
\partial_s g=\alpha_1 N+\alpha_2 \tau_2+\alpha_3\tau_3;
\end{equation}
\item
if $\lambda=+\infty$, then \eqref{dsg} holds with $\alpha_1=0;$
\item
if $0<\lambda<+\infty$, then $w\in W^{2,2}(0,L)$ and \eqref{dsg} holds with $\alpha_1=\frac{1}{\lambda}w'';$
\item
if $\lambda=0$, then $w''=0;$
\item
if $0\leq\mu<+\infty$, then $(g,b)\in\cal{C}_{\mu}$,
where $\cal{C}_{\mu}$ is the class defined in \eqref{clbfin}--\eqref{c0}.
\end{enumerate}
  \end{teo}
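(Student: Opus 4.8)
The strategy is to first establish the weak $L^2$-compactness of $(G^h)$, then read off the structure of the limit strain $G$ from the already-known convergences of Theorem \ref{disp} and Proposition \ref{fpwb}, and finally extract the five regime-dependent conclusions by testing the components of $G^h$ (and of $\partial_s R^h$) against suitable test functions and using the rescaled Korn inequality of Theorem \ref{Korn}.

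For the compactness \eqref{conGh}: by definition $G^h=\frac{1}{\epsilon_h}((R^h)^T\nh Y^hR_0^T-Id)$. Writing $(R^h)^T\nh Y^hR_0^T-Id=(R^h)^T(\nh Y^hR_0^T-R^h)$, estimate \eqref{apr6} gives $\|\nh Y^hR_0^T-R^h\|_{L^2}\le C\epsilon_h$, and since $|(R^h)^T|\le C$ (rotations), $(G^h)$ is bounded in $L^2(\Omega;\mthree)$, yielding \eqref{conGh} up to a subsequence. For the tangential structure \eqref{tan}: one computes $\partial_t G^h$. The key identity is that $\partial_t$ of the $s$- and $x_1$-columns of $\nh Y^h$ can be re-expressed in terms of $\partial_s$ and $\partial_1$ of the $t$-column, up to the factors $h-\delta_htk$; combined with \eqref{eqb} and \eqref{charB}, this shows $\partial_t G_{tan}=-\bigl(\begin{smallmatrix}0&w'\\w'&b\end{smallmatrix}\bigr)$ in the sense of distributions in $t$, whence \eqref{tan} by integration in $t$. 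The equality \eqref{g11u} is immediate: $(G_{tan})_{11}=G_{11}=\frac{1}{\epsilon_h}(\partial_1 Y^h\cdot e_1-1)$ passes to the limit to $g$ by \eqref{defu}, \eqref{u1} and \eqref{deru}.

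The five regime-dependent statements are the heart of the theorem. The common mechanism is: relate $\partial_s$ of a component of $G^h$ to a component of $\frac{1}{h}\partial_s w^h$ or to $\partial_1$ of a cross-sectional displacement, and track the powers of $h$, $\delta_h$ that appear. Concretely, one introduces the cross-sectional displacement $\zeta^h:=\frac{h}{\epsilon_h}\bigl(\myintr{(Y^h-\psi^h)}\bigr)_{\text{cross}}$, applies the rescaled Korn inequality (Theorem \ref{Korn}) on the scaled cross-section with $\epsilon=\delta_h/h$ to control $\zeta^h-\Pi_{\delta_h/h}(\zeta^h)$, and uses Lemma \ref{struttmel} to identify the projection with a rigid motion determined by the twist $w^h$. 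Differentiating the relation between $\partial_1$ of the cross-sectional components and $g^h$ (which is essentially $\sym$ of the curvilinear gradient, hence small by \eqref{symgr}) in the $s$ variable produces an identity of the form $\partial_s g^h = (\text{scalar}^h_1) N + (\text{scalar}^h_2)\tau_2 + (\text{scalar}^h_3)\tau_3 + (\text{error})$, where the scalars involve $w^h$ and its derivatives with the appropriate $h$-powers. One then invokes Lemma \ref{conv2e} to pass to the limit and obtain \eqref{dsg}, and to get $\alpha_1=0$, $\alpha_1=\frac1\lambda w''$, or $w''=0$ according to whether $\lambda=+\infty$, $\lambda\in(0,+\infty)$, or $\lambda=0$ — the value of $\lambda$ controls precisely whether the term carrying $w^h$ survives, vanishes, or forces $w'$ to be constant. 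For part (e), $(g,b)\in\cal{C}_\mu$: the candidate $\phi$ is built from the cross-sectional displacements plus a first component reconstructed from $g$; the relations $\partial_s\phi\cdot\tau=0$ and $\partial_s(\partial_s\phi\cdot n)=b$ come from the $\epsilon\to 0$ limit of the corresponding $\epsilon_h$-rescaled quantities (using \eqref{symgr} and \eqref{cwb}), while $\partial_1^2\phi\cdot\tau+\mu\partial_s g=0$ is exactly the $\mu=\lim\delta_h/h^3$ version of the differentiated relation above — and when $\mu=0$ one reads off $\partial_1^2\phi\cdot\tau=0$, i.e. $(g,b)\in\cal{C}_0=\cal{G}\times\cal{B}$, using Proposition \ref{ging}.

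The main obstacle will be the bookkeeping of the $h$-powers in the regime analysis, and in particular proving enough regularity/compactness of the rescaled cross-sectional displacements to make the distributional identities rigorous — this is where the two-dimensional Korn inequality (Theorem \ref{Korn}) with its $1/\epsilon$ blow-up of the constant, together with Lemma \ref{conv2e}, must be orchestrated carefully. A subtle point is that $w^h$ is only known to converge strongly in $L^2$ while $\frac1h\partial_s w^h$ converges only weakly in $W^{-1,2}$, so all manipulations involving $w''$ must be carried out in the distributional sense and the higher regularity of $w$ (when $\lambda\in(0,+\infty)$) must be recovered a posteriori, e.g. via Lemma \ref{lio} applied to $w$ once \eqref{dsg} is known.
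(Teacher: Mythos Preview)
Your plan is correct and matches the paper's own proof essentially step for step: the compactness of $(G^h)$, the computation of $\partial_t G^h$ via mixed partials to obtain \eqref{tan}, the identification of $G_{11}$ with $g$, and the use of the cross-sectional displacement $\frac{h}{\epsilon_h}(Y^h-\psi^h)_{\text{cross}}$ together with the rescaled Korn inequality (Theorem~\ref{Korn}), Lemma~\ref{struttmel}, and Lemma~\ref{conv2e} to establish (a)--(e). One minor point of presentation: in the paper the higher regularity $w\in W^{2,2}$ for $\lambda\in(0,\infty)$ is obtained directly from the weak $W^{-2,2}$-convergence of $\frac{\delta_h}{h^2}(\alpha_1^h)''$ to $\beta_1$ combined with the identification $\alpha_1^h\approx\frac{h^2}{\delta_h}w^h$, rather than via Lemma~\ref{lio}; and for $\mu=0$ the paper rescales the Korn remainder by the extra factor $\frac{\delta_h}{h^3}$ before passing to the limit, which you will need to do as well since $\frac{h^3}{\delta_h}\to\infty$ in that regime.
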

\begin{proof}
By \eqref{apr6}, the sequence $(G^h)$ is uniformly bounded in $L^2(\Omega;\mthree)$; therefore there exists $G\in L^2(\Omega;\mthree)$ such that \eqref{conGh} holds. By \eqref{conGh}, $$\partial_t(R^hG^hR_0e_1)\deb \partial_t Ge_1$$
weakly in $W^{-1,2}(\Omega;\mathbb{R}^3)$. On the other hand, by \eqref{convgradhdh} we have
\begin{equation}
\nonumber 
\partial_t(R^hG^hR_0e_1)=\frac{1}{\epsilon_h}\,\partial_t(\nh Y^h-R^hR_0)e_1=\frac{1}{\epsilon_h}\,\partial_t(\partial_1 Y^h)=\frac{\delta_h}{\epsilon_h}\,\partial_1\Big(\frac{\partial_t Y^h}{\delta_h}\Big)\ten A'n
\end{equation}
strongly in $W^{-1,2}(\Omega)$. Hence,
\begin{equation}
\label{s1}
G(x_1,s,t)e_1=tA'(x_1)n(s)+G(x_1,s,0)e_1
\end{equation}
for a.e. $(x_1,s,t)\in \Omega$.

 To characterize $G\tau$ we observe that
\begin{multline}
\nonumber \partial_t (R^hG^hR_0e_2)=\frac{1}{\epsilon_h}\,\partial_t(\nh Y^h-R^hR_0)e_2\\=\frac{1}{\epsilon_h}\,\partial_t\big(\frac{\partial_s(Y^h-\psi^h)}{h-\delta_h t k}\big)
=\frac{1}{\epsilon_h}\,\frac{\delta_h}{h-\delta_h t k}\,\partial_s\big(\frac{\partial_t(Y^h-\psi^h)}{\delta_h}\big)+\frac{\delta_h k}{\epsilon_h(h-\delta_h t k)}\,\frac{\partial_s(Y^h-\psi^h)}{h-\delta_h t k}\\
=  \frac{1}{\epsilon_h}\,\frac{\delta_h}{h-\delta_h t k}\,\big(\partial_s(\nh Y^h-R^hR_0)e_3+k(\nh Y^h-R^hR_0)e_2\big)
+\frac{1}{\epsilon_h}\,\frac{\delta_h}{h-\delta_h t k}\,(\partial_s R^h)n.
\end{multline} 
The first term on the right hand side of the previous equality is converging to zero strongly in $W^{-1,2}(\Omega;\mathbb{R}^3)$ due to \eqref{apr6}, therefore by \eqref{uncon} and \eqref{eqb} we deduce 
$$\partial_t (R^hG^h R_0 e_2)\deb Bn$$
weakly in $W^{-1,2}(\Omega;\mathbb{R}^3)$. On the other hand, by \eqref{conGh} we have
$$\partial_t(R^hG^hR_0e_2)\deb \partial_tG \tau$$
weakly in $W^{-1,2}(\Omega;\mathbb{R}^3)$. Hence
\begin{equation}
\label{s2}
G(x_1,s,t)\tau(s)=tB(x_1,s)n(s)+G(x_1,s,0)\tau(s)
\end{equation}
for a.e. $(x_1,s,t)\in \Omega$. Combining \eqref{charA}, \eqref{charB}, \eqref{s1} and \eqref{s2}, we obtain \eqref{tan}. 

By \eqref{symmAh} and \eqref{Gh},
 \begin{equation}
 \label{g11uc}
 \frac{1}{\epsilon_h}\partial_1 (Y^h_1-\psi^h_1)\deb G_{11}=(G_{tan})_{11}
 \end{equation}
 weakly in $L^2(\Omega)$. Therefore \eqref{g11u} follows from \eqref{defu} and \eqref{u1}.
    
 To prove the properties a)--e), we first claim that
\begin{equation}
\label{motiv}
\frac{(h-\delta_h t k)}{\epsilon_h}\partial^2_1(Y^h-\psi^h)\cdot \tau \deb -\partial_s g \text{ weakly in }W^{-1,2}(\Omega).
\end{equation}
Indeed, by \eqref{symgr}
 \begin{multline}
 \nonumber  \Big\|\frac{h-\delta_h t k}{\epsilon_h}\Big(\frac{\partial_s (Y^h_1-\psi^h_1)}{h-\delta_h t k}+\partial_1 (Y^h-\psi^h) \cdot \tau\Big)\Big\|_{L^2}\\\leq 2\Big\|\frac{h-\delta_h t k}{\epsilon_h}\sym (\nh Y^hR_0^T-Id)\Big\|_{L^2}
\leq Ch\Big(1+\frac{\epsilon_h}{\delta_h^2}\Big),
 \end{multline}
 which converges to zero by \eqref{l}.
 Therefore, \eqref{motiv} follows by \eqref{g11uc} and \eqref{g11u}.
 
We introduce the maps 
$\lin{v}^h\in W^{1,2}(\Omega;\mathbb{R}^2)$, given by 
\begin{equation}
\label{vh}
\lin{v}^h:=\Big(\begin{array}{c}v^h_2\\v^h_3\end{array}\Big)=\frac{h}{\epsilon_h}\Big(\begin{array}{c}Y^h_2-\psi^h_2\\Y^h_3-\psi^h_3\end{array}\Big)
\end{equation} for every $h>0$.
       By \eqref{uncon} and \eqref{motiv}, we have 
       \begin{equation}
       \label{cd1vh}
       \partial_1^2 \lin{v}^h\cdot \tg\deb -\partial_s g \text{ weakly in }W^{-1,2}(\Omega).
       \end{equation}
       
Let $\eh$ be the operator introduced in \eqref{eeps}, with $\epsilon$ replaced by $\frac{\delta_h}{h}$. By straightforward computations and by \eqref{symgr}, we obtain 
\begin{equation}
\label{4.14bis}
\|\sym(\eh \lin{v}^h\lin{R}_0^T)\|_{L^2(\Omega;\mathbb{M}^{2\times 2})}\leq\frac{h^2}{\epsilon_h}\Big\|\sym(\nh Y^hR_0^T-Id)\Big\|_{L^2(\Omega;\mthree)}\leq Ch^2
\end{equation}
for every $h>0$.
Applying Korn's inequality \eqref{korn} and using the notation of Theorem \ref{Korn}, we deduce
\begin{equation}
\label{ehbd}
\|\lin{v}^h-\ph(\lin{v}^h)\|_{W^{1,2}(S;\mathbb{R}^2)}\leq C\frac{h}{\delta_h}\|\sym(\eh v^h \lin{R}_0^T)\|_{L^2(S;\mathbb{M}^{2\times 2})},
\end{equation}
  for a.e. $x_1\in (0,L)$. Integrating \eqref{ehbd} with respect to $x_1$, by \eqref{4.14bis} it follows that
     \begin{eqnarray}
   \label{fe} \|\lin{v}^h-\ph(\lin{v}^h)\|_{L^2(\Omega;\mathbb{R}^2)}\leq C\frac{h^3}{\delta_h},\\
   \label{se} \|\partial_s(\lin{v}^h-\ph(\lin{v}^h))\|_{L^2(\Omega;\mathbb{R}^2)}\leq C\frac{h^3}{\delta_h},\\
 \label{te} \|\partial_t(\lin{v}^h-\ph(\lin{v}^h))\|_{L^2(\Omega;\mathbb{R}^2)}\leq C\frac{h^3}{\delta_h}.
\end{eqnarray} 
By Lemma \ref{struttmel}, for every $h>0$ there exist $\alpha_1^h,\alpha_2^h,\alpha_3^h\in L^2(0,L)$ such that $\ph(\lin{v}^h)$ has the following structure:
 \begin{equation}
 \label{struttphw}
 \ph(\lin{v}^h)=\Big(\begin{array}{c}\alpha^h_2\\\alpha^h_3\end{array}\Big)+\alpha^h_1\Big(\begin{array}{c}-\gamma_3\\\gamma_2\end{array}\Big)-\frac{\delta_h}{h}t\alpha^h_1\tg.
 \end{equation}
 Moreover,
 \begin{equation}
\label{4.19bis}
\frac{\delta_h}{h^2}\myintr{\partial_s \lin{v}^h\cdot \n}=w^h
\end{equation}
 for every $h>0$ and for a.e. $(x_1,s)\in\omega$. On the other hand, by \eqref{struttphw}
 \begin{equation}
 \label{4.19ter}
 \frac{\delta_h}{h^2}\myintr{\partial_s \ph(\lin{v}^h)\cdot \n}=\frac{\delta_h}{h^2}\alpha_1^h
 \end{equation}
for every $h>0$ and for a.e. $(x_1,s)\in\omega$. Therefore, by estimate \eqref{se}, there holds
\begin{equation}
\label{fnum}
\|\alpha^h_1-\frac{h^2}{\delta_h}w^h\|_{L^2(\omega)}\leq C\frac{h^3}{\delta_h},
\end{equation}
which in turn gives
\begin{equation}
\label{easycons}
\frac{\delta_h}{h}t\alpha^h_1\tg\ten 0
\end{equation}
strongly in $L^2(\Omega)$.

We first consider the case where $\mu=+\infty$. Then, by \eqref{cd1vh} and \eqref{fe}, we have 
\begin{equation}
 \label{stpoint}
 \partial_1^2(\ph(\lin{v}^h))\cdot \tg\deb -\partial_s g\smallskip\text{ weakly in }W^{-2,2}(\Omega).
 \end{equation}
 Hence, by \eqref{struttphw}, \eqref{easycons} and by Lemma \ref{conv2e} there exist $\alpha_1,\alpha_2,\alpha_3\in L^2(0,L)$ such that \eqref{dsg} holds and the proof of a) is completed.
 
 The proof of b) follows immediately by \eqref{fnum} as if $\lambda=+\infty$, then $\alpha_1=0$.
 
Consider now the case where $\lambda<+\infty$. By \eqref{cd1vh} and \eqref{fe}, we deduce
 \begin{equation}
 \label{f1}
 \frac{\delta_h}{h^2}\partial_1^2(\ph(\lin{v}^h))\cdot \tg\deb -\lambda\partial_s g\smallskip\text{ weakly in }W^{-2,2}(\Omega)
 \end{equation}
 for every $\lambda<+\infty$.
 By \eqref{struttphw}, \eqref{easycons} and by Lemma \ref{conv2e}, there exist $\beta_1,\beta_2,\beta_3\in L^2(0,L)$ such that 
 \begin{equation}
 \label{eth2}
 \frac{\delta_h}{h^2}(\alpha_i^h)''\deb \beta_i,\,i=1,2,3\end{equation}
 weakly in $W^{-2,2}(0,L)$ and 
 \begin{equation}
 \label{4.19quater}
 \lambda \partial_s g=\beta_1 N+\beta_2\tau_2+\beta_3\tau_3.
 \end{equation} 
 By \eqref{fnum} and \eqref{eth2}, if $0<\lambda<+\infty$ we obtain $\beta_1=w''$ and $w\in W^{2,2}(0,L)$. This proves c). 
 
 Finally, to prove d) we observe that if $\lambda=0$, by \eqref{4.19quater} and by Lemma \ref{conv2e} we have $\beta_1=\beta_2=\beta_3=0$, hence $w''=0$.
 
  Assume now that $0<\mu<+\infty$. Defining $\capp{v}^h:=\lin{v}^h-\ph(\lin{v}^h)$, by \eqref{fe}--\eqref{te} there exists $\capp{v}\in L^2(\Omega;\mathbb{R}^2)$ with $\partial_s\capp{v},\partial_t\capp{v} \in L^2(\Omega;\mathbb{R}^2)$ such that, up to subsequences 
\begin{eqnarray}
\label{cv1}&&\capp{v}^h\deb \capp{v},\\
\label{cv2}&&\partial_s\capp{v}^h\deb\partial_s\capp{v},\\
\label{cv3}&&\partial_t \capp{v}^h\deb\partial_t\capp{v},
\end{eqnarray}
weakly in $L^2(\Omega;\mathbb{R}^2)$. Since 
\begin{equation}
\label{id}
\sym(\eh \capp{v}^h \lin{R}_0^T)=\sym(\eh v^h\lin{R}_0^T),
\end{equation} for every $h>0$, combining \eqref{4.14bis} with equations \eqref{cv1}--\eqref{cv3}, we deduce
\begin{equation}
\label{sistlw}
\partial_s\capp{v}\cdot \tg=0,\,\quad\text{ and }\quad
 \partial_t\capp{v}=0. 
\end{equation}
By \eqref{cwb} and \eqref{4.19bis}, we have 
 \begin{equation}
 \label{tbn}
 \frac{\delta_n}{h^3}\partial_s \myintr{\partial_s \lin{v}^h\cdot \n}\deb b
 \end{equation}
 weakly in $W^{-1,2}(\omega)$.
 On the other hand, by \eqref{4.19ter},
 \begin{equation}
 \label{tbn1}\frac{\delta_h}{h^3}\partial_s \myintr{\partial_s \lin{v}^h\cdot \n}=\frac{\delta_h}{h^3}\partial_s \myintr{(\partial_s \lin{v}^h-\partial_s \ph(\lin{v}^h))\cdot \n} =\frac{\delta_h}{h^3}\partial_s\Big(\myintr{\partial_s \capp{v}^h\cdot \n}\Big),
 \end{equation}
 therefore \eqref{sistlw} yields 
 \begin{equation}
 \label{lastconlw}
 \mu\partial_s(\partial_s \capp{v}\cdot\n)=b,
 \end{equation}
 whenever $0<\mu<+\infty$. By \eqref{cd1vh}, \eqref{fe} and \eqref{cv1},
$$\partial_1^2(\ph(\lin{v}^h))\cdot\tg\deb -\partial_s g-\partial_1^2 \capp{v}\cdot\tg$$
weakly in $W^{-2,2}(\Omega)$. By Lemma \ref{conv2e}, by \eqref{struttphw} and \eqref{easycons}
there exist $\alpha_1,\alpha_2,\alpha_3\in W^{-2,2}(0,L)$ such that
\begin{equation}
\label{fff}
\partial_s g=-\alpha_2\tau_2-\alpha_3\tau_3+\alpha_1N-\partial_1^2 \capp{v}\cdot \tg.
\end{equation}
For $i=1,2,3$, let now $\capp{\alpha}_i\in L^2(0,L)$ be such that $(\capp{\alpha}_i)''=\alpha_i$ and let
$$v=\mu\Bigg(\begin{array}{c}0\\\capp{v}+\Big(\begin{array}{c}\capp{\alpha}_2\\\capp{\alpha}_3\end{array}\Big)+\capp{\alpha}_1\Big(\begin{array}{c}-\gamma_3\\\gamma_2\end{array}\Big)\end{array}\Bigg).$$
By \eqref{sistlw}, \eqref{lastconlw}, and \eqref{fff} we deduce that $$\partial_s v\cdot {\tau}=0, \quad \partial_s(\partial_s v\cdot n)=b,\quad \text{ and }\partial_1^2 v\cdot \tau+\mu\partial_s g=0,$$ where the last two equalities hold in the sense of distributions. Therefore $(g,b)\in \cal{C}_{\mu}$.

Finally, we study the case where $\mu=0$. For every $h>0$, we define 
\begin{equation}
\label{tvh}
\til{v}^h:=\frac{\delta_h}{h^3}\capp{v}^h.
\end{equation}
By \eqref{4.14bis}, 
\begin{equation}
\label{ehbd2}
\|\sym(\eh\tvh\lin{R}_0^T)\|_{L^2}\leq C\frac{\delta_h}{h}.
\end{equation}
By \eqref{fe}--\eqref{te} there exists $\til{v}\in L^2(\Omega;\mathbb{R}^2)$, with $\partial_s \til{v},\partial_t \til{v} \in L^2(\Omega;\mathbb{R}^2)$, such that, up to subsequences, \begin{eqnarray}
&&\label{cv10}\capp{v}^h\deb \til{v} ,\\
&&\label{cv20}\partial_s\capp{v}^h\deb\partial_s\til{v},\\
&&\label{cv30}\partial_t \capp{v}^h\deb\partial_t\til{v},
\end{eqnarray} weakly in $L^2(\Omega;\mathbb{R}^2)$. By \eqref{tbn}, \eqref{tbn1} and \eqref{ehbd2},
$\til{v}$ satisfies 
\begin{equation}
\label{sistlb}\partial_s\til{v}\cdot\tg=0,\quad \partial_t\til{v}=0\quad \text{ and }\partial_s(\partial_s\til{v}\cdot\n)=b.
\end{equation}
Moreover, by \eqref{cd1vh} and by \eqref{cv10} we deduce that
\begin{equation}
\label{tbn2}
\partial_1^2(\ph(\tvh))\cdot\tg\deb-\partial_1^2\til{v}\cdot\tg
\end{equation}
weakly in $W^{-2,2}(\Omega)$. Hence, by \eqref{easycons}, Lemma \ref{struttmel} and Lemma \ref{conv2e}, there exist $\alpha_1,\alpha_2,\alpha_3\in W^{-2,2}(0,L)$ such that
\begin{equation}
\label{eqaff}
\partial_1^2\til{v}\cdot\tg=-\alpha_2\tau_2-\alpha_3\tau_3+\alpha_1N.
\end{equation}
Let now $\capp{\alpha}_1,\capp{\alpha}_2, \capp{\alpha}_3\in L^2(0,L)$ be such that 
$\alpha_1=(\capp{\alpha}_1)''$, $\alpha_2=(\capp{\alpha}_2)''$ and $\alpha_3=(\capp{\alpha}_3)''$.
Defining $$v=\Bigg(\begin{array}{c}0\\\til{v}+\Big(\begin{array}{c}\capp{\alpha}_2\\\capp{\alpha}_3\end{array}\Big)+\capp{\alpha}_1\Big(\begin{array}{c}-\gamma_3\\\gamma_2\end{array}\Big)\end{array}\Bigg),$$
by \eqref{sistlb} and \eqref{eqaff}, we deduce that $$\partial_s v\cdot \tau=0, \quad \partial_s(\partial_s v\cdot n)=b \quad \text{ and }\quad\partial_1^2 v\cdot \tau=0,$$ where the last two equalities hold in the sense of distributions. This concludes the proof of the theorem.\end{proof}
We can now deduce a lower bound for the rescaled energies $\epsilon_h^{-2}\cal{J}^h$. To this purpose, from here to the end of the paper we shall assume that \eqref{ls} holds and we introduce the classes $\cal{A}_{\lambda,\mu}$ defined as follows. We define
\begin{multline}
\label{ainin}
  \cal{A}_{\infty,\infty}:=\big\{(w,g,b)\in W^{1,2}(0,L)\times L^2(\st)\times L^2(\st):\\ \partial_s g=\alpha_2\tau_2+\alpha_3\tau_3, 
 \text{ with }\alpha_i\in L^2(0,L),\,i=2,3\big\}.
 \end{multline}
 For $\lambda\in (0,+\infty)$ we set
  \begin{multline}
  \label{alinf}
 \cal{A}_{\lambda,\infty}:=\big\{(w,g,b)\in W^{2,2}(0,L)\times L^2(\st)\times L^2(\st):\\ \partial_s g=\tfrac{1}{\lambda}w'' N+\alpha_2\tau_2+\alpha_3\tau_3, 
\text{ with }\alpha_i\in L^2(0,L),\,i=2,3\big\},
\end{multline}
and for $\lambda=0$
\begin{multline}
\label{a0in}
\cal{A}_{0,\infty}:=\big\{(w,g,b)\in W^{2,2}(0,L)\times L^2(\st)\times L^2(\st):
w''=0\text{ and }\\\partial_sg=\alpha_1 N+\alpha_2\tau_2+\alpha_3\tau_3,
 \text{ with }\alpha_i\in L^2(0,L),\,i=1,2,3\big\}.
 \end{multline}
 Finally, for $\mu\in[0,+\infty)$ let
 \begin{equation}
 \label{a0m}
 \cal{A}_{0,\mu}:=\big\{(w,g,b)\in W^{2,2}(0,L)\times \cal{C}_{\mu}:w''=0\big\}.
 \end{equation}
 We consider the functionals $\cal{J}_{\lambda,\mu}:W^{1,2}(0,L)\times L^2(\st)\times L^2(\st)\longrightarrow [0,+\infty]$, defined as
  \begin{equation}
 \label{jlm}
 \cal{J}_{\lambda,\mu}(w,g,b):=\frac{1}{24}\myintss{Q_2(s,w',b)}+\frac{1}{2}\myintss{\mathbb{E}g^2}
\end{equation}
for $(w,g,b)\in\cal{A}_{\lambda,\mu}$, and  $\cal{J}_{\lambda,\mu}(w,g,b)=+\infty$ otherwise.
\begin{teo}
\label{liminft}
Assume \eqref{l} and \eqref{ls}. Let $\cal{A}_{\lambda,\mu}$ be the classes defined in \eqref{ainin}--\eqref{a0m}. Given any sequence of deformations $(y^h)\subset W^{1,2}(\Omega;\mathbb{R}^3)$ satisfying \eqref{conditionenergy}, there exist rotations $P^h\in SO(3)$ and constants $c^h\in\mathbb{R}^3$ such that, setting $Y^h:=(P^h)^Ty^h-c^h$ and defining $g^h$ and $w^h$ as in \eqref{defu} and \eqref{defw}, there exist $(g,w,b)\in \cal{A}_{\lambda,\mu}$ such that, up to subsequences,
\begin{numcases}{}
\nonumber g^h\deb g\text{ weakly in }L^2(\Omega),&\\
\nonumber w^h\ten w\text{ in }L^2(\st),&\\
\frac{1}{h}\partial_s w^h\deb b\text{ weakly in }W^{-1,2}(\st).&\label{prop1}
\end{numcases}
Moreover,
\begin{equation}
\label{liminf}
 \liminf_{h\ten 0}\frac{1}{\epsilon_h^2}{\cal{J}^h(Y^h)}
\geq  \cal{J}_{\lambda,\mu}(w,g,b),
\end{equation}
where $ \cal{J}_{\lambda,\mu}$ is the functional defined in \eqref{jlm}.
\end{teo}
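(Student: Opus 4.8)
The plan is to follow the linearization scheme of Friesecke--James--M\"uller, plugging in the compactness and structure results already available. First I would note that \eqref{l} together with \eqref{scaling} forces $\epsilon_h/\delta_h\ten0$ and $\epsilon_h/(h\delta_h)\ten0$, so that Theorems~\ref{apr} and \ref{disp} and Proposition~\ref{fpwb} apply: they produce the rotations $P^h\in SO(3)$, the constants $c^h$ (chosen so that \eqref{aver} holds), the sequence $Y^h=(P^h)^Ty^h-c^h$, the rotations $(R^h)$, and the limits $g,w,b$ satisfying \eqref{u1}, \eqref{w} and \eqref{cwb}, i.e. the convergences claimed in \eqref{prop1}. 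By Theorem~\ref{deep}, the strains $G^h$ of \eqref{Gh} converge, up to a subsequence, weakly in $L^2(\Omega;\mthree)$ to some $G$ for which \eqref{tan}--\eqref{g11u} hold; in particular $G_{11}=g$ and $g$ is independent of $t$. Comparing parts (a)--(e) of Theorem~\ref{deep} with the definitions \eqref{ainin}--\eqref{a0m} shows that $(w,g,b)\in\cal{A}_{\lambda,\mu}$, which is the first assertion; note in particular that then $\cal{J}_{\lambda,\mu}(w,g,b)<+\infty$.

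For the energy estimate I would pass to a subsequence realizing the liminf and truncate. Let $\chi_h$ be the characteristic function of $B_h:=\{x\in\Omega:|G^h(x)|\le\epsilon_h^{-1/2}\}$. Since $(G^h)$ is bounded in $L^2$ by \eqref{apr6}, Chebyshev's inequality gives $|\Omega\setminus B_h|\le C\epsilon_h\ten0$, hence $(1-\chi_h)G^h\ten0$ in $L^1(\Omega)$ and therefore $\chi_hG^h\deb G$ weakly in $L^2(\Omega;\mthree)$. On $B_h$ one has $|\epsilon_hG^h|\le\epsilon_h^{1/2}\ten0$ uniformly, so for $h$ small $Id+\epsilon_hG^h$ lies in the neighbourhood of $SO(3)$ where $W$ is $C^2$; using frame indifference (H2) and \eqref{apr1} to write $W(\nh Y^hR_0^T)=W\big((R^h)^T\nh Y^hR_0^T\big)=W(Id+\epsilon_hG^h)$ and Taylor-expanding about $Id$ (where $W=0$ and $DW=0$ by (H3)--(H5)),
\begin{equation*}
\frac1{\epsilon_h^2}\,W(Id+\epsilon_hG^h)\ \ge\ \tfrac12 Q_3(G^h)-\omega_h\,|G^h|^2\qquad\text{on }B_h,
\end{equation*}
with $\omega_h\ten0$. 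Multiplying by $\chi_h$, discarding $\Omega\setminus B_h$ since $W\ge0$, replacing the weight $\tfrac{h-\delta_htk}{h}$ by $1$ at the price of a vanishing error by \eqref{uncon}, and absorbing the remainder via $\|G^h\|_{L^2}\le C$ and $|Q_3(F)|\le C|F|^2$, we obtain
\begin{equation*}
\liminf_{h\ten0}\frac1{\epsilon_h^2}\,\cal{J}^h(Y^h)\ \ge\ \liminf_{h\ten0}\frac12\myintom{Q_3(\chi_hG^h)}\ \ge\ \frac12\myintom{Q_3(G)},
\end{equation*}
the last step by weak lower semicontinuity in $L^2$ of the convex functional $F\mapsto\int Q_3(F)$.

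It remains to show $\tfrac12\myintom{Q_3(G)}\ge\cal{J}_{\lambda,\mu}(w,g,b)$. By (H2)--(H5) the form $Q_3$ vanishes on antisymmetric matrices and $Q_3(F)=Q_3(\sym F)$, so we may replace $G$ by $\sym G$; moreover $(\sym G)_{tan}=\sym(G_{tan})$, hence by \eqref{tan} and \eqref{g11u},
\begin{equation*}
(\sym G)_{tan}(x_1,s,t)=-t\begin{pmatrix}0&w'(x_1)\\ w'(x_1)&b(x_1,s)\end{pmatrix}+H(x_1,s),\qquad H_{11}=g,
\end{equation*}
where $H:=(\sym G)_{tan}(x_1,s,0)$. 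For $M\in\mthree$ symmetric, define $\psi(p,q,r;s):=\min\{Q_3(M):M=M^T,\ M_{tan}=(\begin{smallmatrix}p&q\\q&r\end{smallmatrix})\}$, where the minimum is taken over the three entries of $R_0^T(s)MR_0(s)$ involving the normal direction $n(s)$; this is a positive semidefinite quadratic form with $\psi(0,q,r;s)=Q_2(s,q,r)$ (by \eqref{conrm}) and, since $R_0e_1=e_1$, $\min_{q,r\in\mathbb{R}}\psi(p,q,r;s)=\mathbb{E}p^2$ (by \eqref{defE}). Then $Q_3(\sym G(x_1,s,t))\ge\psi(g,H_{12}-tw',H_{22}-tb;s)$ a.e.; since $\psi$ is quadratic, the argument is affine in $t$, and $g$ does not depend on $t$, integrating in $t$ with $\myintr{t}=0$ and $\myintr{t^2}=\tfrac1{12}$ gives
\begin{equation*}
\myintr{Q_3(\sym G(x_1,s,t))}\ \ge\ \psi(g,H_{12},H_{22};s)+\tfrac1{12}\,\psi(0,w',b;s)\ \ge\ \mathbb{E}g^2+\tfrac1{12}\,Q_2(s,w',b).
\end{equation*}
Integrating over $\omega$ and halving yields exactly $\tfrac1{24}\myintss{Q_2(s,w',b)}+\tfrac12\myintss{\mathbb{E}g^2}=\cal{J}_{\lambda,\mu}(w,g,b)$, which is \eqref{liminf}.

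The truncation, the Taylor expansion, the weak lower semicontinuity and the identification of $(w,g,b)$ with an element of $\cal{A}_{\lambda,\mu}$ are all routine once the package of Theorems~\ref{apr}, \ref{disp}, \ref{fpwb} and \ref{deep} is in place. I expect the only genuinely delicate point to be the last paragraph: one must verify that, after eliminating the out-of-plane components, the form $\psi$ decouples so that $\psi(0,\cdot,\cdot;s)$ reproduces $Q_2(s,\cdot,\cdot)$ and a further minimization of $\psi(g,\cdot,\cdot;s)$ reproduces $\mathbb{E}g^2$, and that the odd powers of $t$ integrate to zero, so that precisely the factor $\tfrac1{12}$ (hence $\tfrac1{24}$ after halving) appears in front of the bending term.
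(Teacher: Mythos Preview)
Your proof is correct and follows essentially the same route as the paper: the compactness and the membership $(w,g,b)\in\cal{A}_{\lambda,\mu}$ are read off from Theorems~\ref{apr}, \ref{disp}, Proposition~\ref{fpwb} and Theorem~\ref{deep}, and the liminf is obtained by the standard truncation/Taylor expansion/weak lower semicontinuity argument. The only cosmetic difference is in the last step, where the paper splits $G=(G-\myintr{G})+\myintr{G}$ and uses orthogonality in $t$ before minimizing, whereas you first minimize out the normal components via the intermediate quadratic form $\psi$ and then split in $t$; both yield the same constants $\tfrac1{24}$ and $\tfrac12\mathbb{E}$.
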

\begin{proof}
Convergence properties \eqref{prop1} follow from Theorem \ref{disp} and Proposition \ref{fpwb}. Moreover, Proposition \ref{ging} and Theorem \ref{deep} guarantee that $(g,w,b)\in \cal{A}_{\lambda,\mu}$.
The proof of the lower bound \eqref{liminf} is an adaptation of \cite[Proof of Corollary 2]{F-J-M}. 

Let $G^h$ be defined as in \eqref{Gh}. We introduce the functions $$\chi^h(x):=\begin{cases}
 1 &\text{ if } |G^h|< \frac{1}{\sqrt{\epsilon_h}}\\0 &\text{ otherwise. }\end{cases}$$
 It is easy to see that $\chi^h \ten 1$ in measure and $\chi^hG^h \deb G$ weakly in $L^2(\Omega;\mthree)$.
 By frame indifference of $W$,
 \begin{eqnarray}
 \nonumber
 \liminf_{h\ten 0}\frac{\cal{J}^h(Y^h)}{\epsilon_h^2}&=&\liminf_{h\ten 0}\frac{1}{\epsilon_h^2}\myintom{W(\nabla_{h,\delta_h}Y^h R_0^T)}\\
 \nonumber
 &=&\liminf_{h \ten 0}\frac{1}{\epsilon_h^2}\myintom{W(Id+\epsilon_h G^h)}\\
 \label{feq}&\geq& \liminf_{h \ten 0} \frac{1}{\epsilon_h^2}\myintom{\chi^h W(Id+\epsilon_h G^h)}.
 \end{eqnarray}
 
 Owing to assumptions (H2), (H3), and (H5), by a Taylor expansion of $W$ around the identity we have: $$W(Id+F)=\frac{1}{2}Q_3(F)+\eta(F),$$
 for any $F\in\mthree$, where $\frac{\eta(F)}{|F|^2}\ten 0$ as $|F|\ten 0$. Setting $\xi(t):=\sup_{|F|\leq t}\frac{\eta(F)}{|F|^2}$, then $\xi(t)\ten 0$ as $t\ten 0$ and
 $$\chi_h W(Id+\epsilon_hG^h)\geq\chi_h\frac{\epsilon_h^2}{2}Q_3(G^h)-\chi_h\epsilon_h^2\xi({\epsilon_h}|G^h|)|G^h|^2.$$
 
 Thus, we can continue the chain of inequalities in \eqref{feq} as
 \begin{eqnarray}
 \nonumber \liminf_{h\ten 0}\frac{\cal{J}^h(Y^h)}{\epsilon_h^2}&\geq&  \liminf_{h \ten 0}\Big\{\frac{1}{2}\myintom{Q_3(\chi_h G^h)}-\frac{1}{2}\myintom{\chi_h\xi(\epsilon^h |G^h|)|G^h|^2}\Big\}.\\\label{feq2}
  \end{eqnarray}
  By the assumptions on $W,$ $Q_3$ is a positive semi-definite quadratic form, hence the first term in \eqref{feq2} is lower semicontinuous with respect to weak convergence in $L^2$. By definition of the sequence $(\chi_h)$ and by uniform boundedness of $\|G^h\|_{L^2(\Omega;\mthree)}$, the second term in \eqref{feq2} can be bounded as
  $$\frac{1}{2}\myintom{\chi_h\xi(\epsilon^h |G^h|)|G^h|^2}\leq C\xi(\sqrt{\epsilon_h})$$ and therefore it is converging to zero as $h\ten 0$. Collecting the previous remarks, it follows that $$\liminf_{h \ten 0}\frac{\cal{J}^h(Y^h)}{\epsilon_h^2}\geq \frac{1}{2}\myintom{Q_3(G)}.$$
  
  We can decompose $G$ as 
  $$G=\Big(G-\myintr{G}\Big)+\myintr{G},$$
  where by the characterizations \eqref{tan} and \eqref{g11u}
  $$\Big(G-\myintr{G}\Big)_{tan}=-t\Big(\begin{array}{cc}0&w'\\w'&b\end{array}\Big)\quad\text{ and }\quad\myintr{G_{11}}=g.$$
  Therefore, by developing the quadratic form and using \eqref{defE} and \eqref{conrm}, we obtain
   \begin{eqnarray}
 \nonumber \myintom{Q_3(G)}&=&\myintom{Q_3\Big(G-\myintr{G}\Big)}+\myintss{Q_3\Big(\myintr{G}\Big)}\\
\nonumber&\geq& \frac{1}{12}\myintss{Q_2(s,w',b)}+\myintss{\mathbb{E}g^2}.
\end{eqnarray}
This concludes the proof.
\end{proof}
\section{Construction of the recovery sequence}
\label{construction}
 \noindent In this section  we show that the lower bound obtained in Theorem \ref{liminft} is optimal by exhibiting a recovery sequence.  The structure of such an optimal sequence varies according to the values of $\lambda$ and $\mu$. 
 \begin{teo}
 \label{limsupt}
 Assume \eqref{l} and \eqref{ls}. Let $\cal{A}_{\lambda,\mu}$ be the classes defined in \eqref{ainin}--\eqref{a0m}. Then, if $\mu>0$, for any $(w,g,b)\in \cal{A}_{\lambda,\mu}$ there exists a sequence of deformations $(y^h)\subset W^{1,2}(\Omega;\mathbb{R}^3)$ such that, defining $g^h$ and $w^h$ as in \eqref{defu} and \eqref{defw}, we have
\begin{eqnarray}
\label{cn0}&& y^h\ten x_1e_1\text{ strongly in }W^{1,2}(\Omega;\mathbb{R}^3),\\
 \label{cn1}&&g^h\ten g \text{ strongly in }L^2(\st),\\
  \label{cn3}&&w^h\ten w \text{ in }L^2(\st),\\
   \label{cn5}&&\frac{\partial_sw^h}{h}\ten b \text{ strongly in }L^2(\omega).
\end{eqnarray}
Moreover,
\begin{equation}
\label{cen}
\limsup_{h\ten 0}\frac{1}{\epsilon_h^2}\cal{J}^h(y^h)\leq \cal{J}_{\lambda,\mu}(w,u,b),
\end{equation}
where $\cal{J}_{\lambda,\mu}$ is the functional defined in \eqref{jlm}.

The same conclusion holds if $\mu=0$, assuming in addition the hypotheses of Lemma \ref{aplem2}.
\end{teo}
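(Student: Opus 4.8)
The plan is to exhibit the recovery sequence first for a dense subclass of \emph{smooth} triples $(w,g,b)$ and then to treat a general triple in $\cal{A}_{\lambda,\mu}$ by a diagonal argument based on the lower semicontinuity of $\cal{J}_{\lambda,\mu}$ along \eqref{cn0}--\eqref{cn5}, which is exactly the content of \eqref{liminf}. The reduction to smooth data is furnished by the approximation results of Section \ref{limit}. When $\mu\in(0,+\infty)$ (so that necessarily $\lambda=0$), Lemma \ref{aplem} produces $(\phi^{\epsilon})\subset C^5(\lin{\st};\mathbb{R}^3)$ with $e(\phi^{\epsilon})\ten \big(\begin{smallmatrix}\mu g&0\\0&0\end{smallmatrix}\big)$ and $\partial_s(\partial_s\phi^{\epsilon}\cdot n)\ten b$ strongly in $L^2$; when $\mu=+\infty$, membership in $\cal{A}_{\lambda,\infty}$ identifies $g$ as the tangential derivative of the first component of a curvilinear Bernoulli--Navier field $v$ with cross-sectional components prescribed explicitly in terms of $\alpha,\beta$ and (if $0<\lambda<+\infty$) of $w$, and Lemma \ref{scriverelemma} together with Remark \ref{recuse} yield smooth approximants; when $\mu=0$ one approximates the $g$--part via Remark \ref{recuse} and the $b$--part via Lemma \ref{aplem2}, which is the only place where the hypothesis on the sign of $k$ is used, whence the additional restriction in the statement.

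For smooth data I would work with a Cosserat-type ansatz, writing $y^h$ as $\psi^h$ plus a rotation term built from a smooth field $R^h\in SO(3)$ with $\tfrac{\delta_h}{\epsilon_h}(R^h-Id)\ten A$ and $\tfrac{\delta_h}{h\epsilon_h}\partial_sR^h\ten B$ as in \eqref{charA}--\eqref{charB} (for instance $R^h=\exp\big(\tfrac{\epsilon_h}{\delta_h}\widehat{A}^h\big)$ with $\widehat{A}^h$ a smooth skew field assembled from $w$ and from the smooth approximants of $b$), plus an in-plane displacement term carrying the limiting field $v$ --- so that the smooth fields $v^h$ satisfy $\partial_1v^h_1\ten g$, $\partial_sv^h\cdot\tau\ten 0$ and $\partial_sv^h_1+\partial_1v^h\cdot\tau\ten 0$ in $L^2$ --- plus through-thickness correctors linear and quadratic in $t$; each term has to be inserted at the order of magnitude dictated by the regime, matching the factors $\delta_h/h$, $\delta_h/h^2$, $\delta_h/h^3$ that appear in the structure of $\cal{A}_{\lambda,\mu}$. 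The correctors are then selected pointwise so that
\[
(R^h)^T\,\nh y^hR_0^T=Id+\epsilon_hG^h+o(\epsilon_h)\quad\text{uniformly in }\Omega,
\]
with $(G^h)$ equibounded in $L^\infty$ and $G^h\ten G$, where $G_{tan}=-t\big(\begin{smallmatrix}0&w'\\w'&b\end{smallmatrix}\big)+G_{tan}(\cdot,\cdot,0)$, $G_{11}=g$, and the remaining entries of $G(\cdot,\cdot,0)$ are the minimizers realizing $\mathbb{E}$ in \eqref{defE} and $Q_2(s,w',b)$ in \eqref{conrm}. The compatibility of such an ansatz with the differential constraints defining $\cal{A}_{\lambda,\mu}$ (the isometry relations $\partial_sv\cdot\tau=0$, $\partial_s(\partial_sv\cdot n)=b$ and, when $\mu\ne0$, the coupling $\partial_1^2v\cdot\tau+\mu\partial_sg=0$) is precisely what makes this choice feasible, and is the reason the smoothing lemmas of Section \ref{limit} were stated in their strong forms.

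With such a $y^h$ at hand, \eqref{cn0}--\eqref{cn5} are read off from the ansatz and the strong convergences of the smooth approximants; in particular $y^h\ten x_1e_1$ in $W^{1,2}$, $g^h\ten g$ in $L^2$, and $\tfrac1h\partial_sw^h\ten b$ in $L^2$ via the decomposition \eqref{connwb} evaluated on the explicit $R^h$. For the energy, frame indifference (H2) and (H3) give $W(\nh y^hR_0^T)=W\big((R^h)^T\nh y^hR_0^T\big)=W\big(Id+\epsilon_hG^h+o(\epsilon_h)\big)$, and the $C^2$ expansion near $SO(3)$ from (H5), $W(Id+F)=\tfrac12Q_3(F)+o(|F|^2)$, together with the $L^\infty$ bound on $G^h$, \eqref{uncon} and dominated convergence yield
\[
\lim_{h\ten0}\frac1{\epsilon_h^2}\cal{J}^h(y^h)=\frac12\myintom{Q_3(G)}=\frac1{24}\myintss{Q_2(s,w',b)}+\frac12\myintss{\mathbb{E}g^2}=\cal{J}_{\lambda,\mu}(w,g,b),
\]
where the second equality is the splitting of $Q_3(G)$ into its $t$-average and its mean-zero part used at the end of the proof of Theorem \ref{liminft}, now an equality because the correctors realize the pointwise minima defining $\mathbb{E}$ and $Q_2$. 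The general triple in $\cal{A}_{\lambda,\mu}$ is then handled by applying the smooth construction to the approximants and extracting a diagonal sequence, using $\cal{J}_{\lambda,\mu}(w,g,b)\le\liminf_{\epsilon}\cal{J}_{\lambda,\mu}(w^{\epsilon},g^{\epsilon},b^{\epsilon})$ along the above convergences.

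The main obstacle, I expect, is the bookkeeping of the second paragraph: the in-plane profile, the rotation field, and the correctors must be calibrated to each of the regimes $\mu=+\infty$, $\mu\in(0,+\infty)$, $\mu=0$ (and, within $\mu=+\infty$, the subcases $\lambda=+\infty$, $\lambda\in(0,+\infty)$, $\lambda=0$) in such a way that \emph{simultaneously} $(R^h)^T\nh y^hR_0^T=Id+\epsilon_hG^h+o(\epsilon_h)$ with the optimal $G$, $w^h\ten w$ in $L^2$ with $\tfrac1h\partial_sw^h\ten b$, and $R^h$ stays genuinely $SO(3)$-valued with the prescribed derivative decay; the compatibility conditions encoded in $\cal{A}_{\lambda,\mu}$ are exactly what render this consistent, but the verification is delicate and must be carried out case by case.
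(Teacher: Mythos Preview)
Your high-level strategy matches the paper's: reduce to smooth data via the approximation results of Section~\ref{limit}, construct an explicit recovery sequence case by case according to $(\lambda,\mu)$, insert through-thickness correctors realizing the minima in \eqref{defE} and \eqref{conrm}, and conclude by frame indifference, Taylor expansion, and dominated convergence. The paper, however, does not introduce an $SO(3)$-valued field $R^h$ in the ansatz; it writes $y^h$ as $\psi^h$ plus explicit displacement terms (including the quadratic correction $-\tfrac{\epsilon_h^2}{2\delta_h^2}w^2(h\gamma+\delta_ht\,n)$, which is precisely the second-order term of your exponential) and computes the Cauchy--Green tensor $(\nh y^hR_0^T)^T(\nh y^hR_0^T)=Id+2\epsilon_hM+o(\epsilon_h)$ directly via $(Id+F)^T(Id+F)=Id+2\,\sym F+F^TF$, then takes the square root. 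Your Cosserat framing is equivalent but adds the burden of checking that $R^h$ is genuinely $SO(3)$-valued; the paper's direct route avoids this.

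One point needs correcting. In your last paragraph you invoke the inequality $\cal{J}_{\lambda,\mu}(w,g,b)\le\liminf_{\epsilon}\cal{J}_{\lambda,\mu}(w^{\epsilon},g^{\epsilon},b^{\epsilon})$ for the diagonal argument. That is the wrong direction: for the limsup construction you need $\limsup_{\epsilon}\cal{J}_{\lambda,\mu}(w^{\epsilon},g^{\epsilon},b^{\epsilon})\le\cal{J}_{\lambda,\mu}(w,g,b)$, i.e.\ continuity of $\cal{J}_{\lambda,\mu}$ along the approximants. This holds trivially here because the approximation lemmas deliver \emph{strong} $L^2$ convergence of $g^{\epsilon}\to g$ and $b^{\epsilon}\to b$ (and one may take $w^{\epsilon}\to w$ strongly in $W^{1,2}$), and $\cal{J}_{\lambda,\mu}$ is a continuous quadratic form in $(w',g,b)$. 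With this fix, your outline is sound and essentially the paper's argument.
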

\begin{proof}
For the sake of simplicity, we divide the proof into five steps. In the first step we consider the case where $\lambda=+\infty$. Then we show how the recovery sequence must be modified for different values of $\lambda$ and $\mu$. \\
{\bf Step 1: $\lambda=\mu=+\infty$.}\\
Let $(w,g,b)\in \cal{A}_{\infty,\infty}$. We can assume that $w\in C^{\infty}([0,L])$, $b\in C^{\infty}(\lin{\omega})$, and there exist $\alpha_i\in C^{\infty}([0,L])$, $i=2,3,4$, such that
$$g=\alpha_2''\gamma_2+\alpha_3''\gamma_3+\alpha_4''.$$ 
The general case follows from approximation and standard arguments in $\Gamma$-convergence.

Let $\sigma_i\in C^{5}(\lin{\omega})$, $i=1,2,3$, be such that
\begin{equation}
\label{minpt}
Q_2(s,w,b)=Q_3\Bigg(R_0\Bigg(\begin{array}{ccc}0&w'&\sigma_1\\w'&b&\sigma_2\\\sigma_1&\sigma_2&\sigma_3\end{array}\Bigg)R_0^T\Bigg)
\end{equation}
for every $(x_1,s)\in\lin{\omega}$, and let $H\in C^{5}(\lin{\omega};\mthree_{\sym})$, $H=(h_{ij})$, be defined as 
\begin{equation}
\nonumber
H:=R_0\Bigg(\begin{array}{ccc}0&0&\sigma_1\\0&0&\sigma_2\\\sigma_1&\sigma_2&\sigma_3\end{array}\Bigg)R_0^T.
\end{equation}
For every $h>0$ we introduce the functions $\sigma^h\in C^5(\lin{\Omega};\mathbb{R}^3)$ defined as 
\begin{equation}
\nonumber
\sigma^h:=\epsilon_h\delta_h \Big(\frac{t^2}{2}-\frac{1}{24}\Big)\Bigg(\begin{array}{c}2\sigma_1\\2\sigma_2\tau_2-\sigma_3\tau_3\\2\sigma_2\tau_3+\sigma_3\tau_2\end{array}\Bigg).
\end{equation}
 It is easy to see that
\begin{equation}
\label{plh}
\sym(\nh\sigma^hR_0^T)=\epsilon_htH+o(\epsilon_h).
\end{equation}

Let also $F\in\mthree$ be the matrix defined by 
\begin{equation}
\label{plhbis}
\mathbb{E}=Q_3(e_1\otimes e_1+F),
\end{equation}
where $\mathbb{E}$ is the quantity introduced in \eqref{defE}. 

Finally, let $v\in C^6(\lin{\st};\mathbb{R}^2)$, $v=(v_2,v_3)$ be a solution of\begin{numcases}{}
 \partial_s v\cdot \lin{\tau}=0\text{ in }\omega,& \label{eku1}\\
 \partial_s\Big(\partial_sv\cdot \lin{n}\Big)=b\text{ in }\omega &\label{eku2}
\end{numcases}
and let $\lin{\psi}{}^{\frac{\delta_h}{h}}$ be the map introduced in \eqref{psie}, with $\epsilon=\frac{\delta_h}{h}$.

We consider the sequence
\begin{eqnarray}
\nonumber \capp{y}^h&=&\psi^h+\epsilon_h\Big(\begin{array}{c}\alpha_2'\\\alpha_3'\end{array}\Big)\cdot \lin{\psi}{}^{\frac{\delta_h}{h}}e_1+\epsilon_h\alpha_4'e_1-\frac{\epsilon_h}{h}\Bigg(\begin{array}{c}0\\\alpha_2\\\alpha_3\end{array}\Bigg)\\
\nonumber &+&\epsilon_h F\Big(h\Big(\alpha_4''\gamma+\sum_{i=2,3}\alpha_i''\int_0^s{\gamma_i(\xi)\tau(\xi) d\xi}\Big)+\delta_h t \Big(\alpha_4''+\sum_{i=2,3}\alpha_i''\gamma_i\Big)n\Big)\\
\nonumber &+&\frac{\epsilon_h}{\delta_h}w\Bigg(h\Bigg(\begin{array}{c}0\\-\gamma_3\\\gamma_2\end{array}\Bigg)-\delta_h t\tau\Bigg)-\frac{h\epsilon_h}{\delta_h}w' \Big(\delta_htT-h\int_0^s{N(\xi)d\xi}\Big)e_1\\ 
\nonumber &-&th\epsilon_h \Big(\partial_sv\cdot \lin{n}\Big) \tau+\frac{h^2\epsilon_h}{\delta_h}\Big(\begin{array}{c}0\\v\end{array}\Big)\\
\nonumber &-&\sigma^h-\frac{\epsilon_h^2}{2\delta_h^2}w^2(h\gamma+\delta_h t n).
\end{eqnarray}
We briefly comment on the structure of $\capp{y}^h$: the terms in the first line are related to conditions \eqref{cn0} and \eqref{cn1}, the second line is a corrective term to obtain the optimal constant $\mathbb{E}$, the terms in the third and the fourth line are introduced to satisfy respectively conditions \eqref{cn3} and \eqref{cn5}, and the last line contains a further corrective term.

We first prove that $\capp{y}^h$ satisfies \eqref{cn0}--\eqref{cn5}. By \eqref{l} we have
$$\|\capp{y}^h-x_1e_1\|_{W^{1,2}(\Omega;\mathbb{R}^3)}\leq Ch,$$
from which \eqref{cn0} follows. Condition \eqref{cn1} holds since
\begin{equation}
\label{cu0}
\partial_1(\capp{y}^h_1-x_1)=\epsilon_h g+\frac{h^2\epsilon_h}{\delta_h}w''\int_0^s{N(\xi)d\xi}+o(\epsilon_h)
\end{equation}
and $\lambda=+\infty$.
By the equality
\begin{equation}
\nonumber \frac{\delta_h}{h\epsilon_h} \myintr{\partial_s (\capp{y}^h-\psi^h)\cdot n}=w+h\partial_sv\cdot \n+o(h),
\end{equation}
and by \eqref{eku2}, we deduce \eqref{cn3} and \eqref{cn5}.

To prove convergence of the energies, we first compute the rescaled gradient of the deformations. By \eqref{eku1} and \eqref{eku2}, we obtain
\begin{eqnarray}
\nonumber \nh \capp{y}^h&=&R_0+\epsilon_h ge_1\otimes e_1
+\epsilon_h gF\big(0\big|\tau\big|n\big)\\
\nonumber &+&\frac{\epsilon_h}{h}\Bigg(\begin{array}{ccc}0&\alpha_2'\tau_2+\alpha_3'\tau_3&\alpha_3'\tau_2-\alpha_2'\tau_3\\-\alpha_2'&0&0\\-\alpha_3'&0&0\end{array}\Bigg)\\
\nonumber &-&\epsilon_ht\big(w'\tau\big|w'e_1+b\tau\big|0\big)+\Big(\frac{\epsilon_h}{\delta_h}w+\frac{h\epsilon_h}{\delta_h}(\partial_sv\cdot \lin{n})n\Big)\big(0\big|n\big|-\tau\big)\\\nonumber&+&\frac{h\epsilon_h}{\delta_h}w'\Bigg(\begin{array}{ccc}0&N&-T\\-\gamma_3&0&0\\\gamma_2&0&0\end{array}\Bigg)-\nh \sigma^h-\frac{\epsilon_h^2}{2\delta_h^2}w^2\big(0\big|\tau\big|n\big)+o(\epsilon_h). 
\end{eqnarray}

We point out that the two terms 
$$\Big(\frac{h^2\epsilon_h}{\delta_h}w''\int_0^s{N(\xi)d\xi}\Big)e_1\otimes e_1\quad\text{ and }\quad\frac{h^2\epsilon_h}{\delta_h}\Bigg(\begin{array}{c}0\\\partial_1 v_2\\\partial_1 v_3\end{array}\Bigg)\otimes e_1$$
are infinitesimal of order larger than $\epsilon_h$ since we are assuming $\lambda=+\infty$. Therefore they can be included in the error term $o(\epsilon_h)$.

 The previous equality in turn gives:
\begin{eqnarray}
\nonumber \nh \capp{y}^hR_0^T&=&Id+\epsilon_h g(e_1\otimes e_1+F)+\frac{\epsilon_h}{h}\Bigg(\begin{array}{ccc}0&\alpha_2&\alpha_3\\-\alpha_2&0&0\\-\alpha_3&0&0\end{array}\Bigg)\\
\nonumber &-&\epsilon_ht\big(w'\tau\big|w'e_1+b\tau\big|0\big)R_0^T+\Big(\frac{\epsilon_h}{\delta_h}w+\frac{h\epsilon_h}{\delta_h}(\partial_sv\cdot\lin{n})n\Big)\Bigg(\begin{array}{ccc}0&0&0\\0&0&-1\\0&1&0\end{array}\Bigg)\\\nonumber&+&\frac{h\epsilon_h}{\delta_h}w'\Bigg(\begin{array}{ccc}0&\gamma_3&-\gamma_2\\-\gamma_3&0&0\\\gamma_2&0&0\end{array}\Bigg)-\nh \sigma^hR_0^T-\frac{\epsilon_h^2}{2\delta_h^2}w^2\Bigg(\begin{array}{ccc}0&0&0\\0&1&0\\0&0&1\end{array}\Bigg)\\
\nonumber &+&o(\epsilon_h). 
\end{eqnarray}
The identity $(Id+F)^T(Id+F)=Id+2\sym F+F^TF$ yields
\begin{eqnarray}
\nonumber (\nh \capp{y}^hR_0^T)^T(\nh \capp{y}^hR_0^T)=Id+2\epsilon_h M+o(\epsilon_h),
\end{eqnarray}
where $M$ is given by
\begin{equation}
\nonumber
M:=g(e_1\otimes e_1+\sym F)-t\Bigg(R_0\Bigg(\begin{array}{ccc}0&w'&0\\w'&b&0\\0&0&0\end{array}\Bigg)R_0^T+H\Bigg),
\end{equation}
owing to \eqref{plh}. Hence, by frame-indifference,
$$W(\nh \capp{y}^hR_0^T)=W\Big(\sqrt{(\nh \capp{y}^hR_0^T)^T(\nh \capp{y}^hR_0^T)}\Big)=W(Id+\epsilon_h M+o(\epsilon_h)).$$
Since $M$ is bounded in $L^{\infty}$, there exists $\lin{h}$ such that if $h<\lin{h}$, $Id+\epsilon_h M+o(\epsilon_h)$ belongs to the neighbourhood of SO(3) where $W$ is $C^2$, therefore a Taylor expansion around the identity gives:
$$\frac{1}{\epsilon_h^2}W(\nh \capp{y}^hR_0^T)\ten \frac{1}{2} Q_3(M)\text{ pointwise },$$
and
$$W(\nh \capp{y}^hR_0^T)\leq C(|M|^2+1),$$
for some constant $C$.
By dominated convergence theorem and by \eqref{minpt} and \eqref{plhbis} we deduce
\begin{eqnarray}
\nonumber \lim_{h\ten 0}\frac{\cal{J}^h(\capp{y}^h)}{\epsilon_h^2}&=&\frac{1}{2}\myintom{Q_3(M)}\\
\nonumber &=&\frac{1}{24}\myintss{Q_2(s,w',b)}+\frac{1}{2}\myintss{\mathbb{E}g^2},
\end{eqnarray}
 which concludes the proof of \eqref{cen} in the case where $\lambda=+\infty$. \\
{\bf Step 2: $0<\lambda<+\infty$ and $\mu=+\infty$.}\\
Let $(w,g,b)\in\cal{A}_{\lambda,\infty}$. We can assume that $w\in C^{\infty}[0,L]$, $b\in C^{\infty}(\lin{\omega})$, and there exist $\alpha_i\in C^{\infty}(0,L)$, $i=2,3,4$, such that
 $$g=\frac{1}{\lambda}w''\int_0^s{N(\xi)d\xi}+\alpha_2''\tau_2+\alpha_3''\tau_3+\alpha_4''.$$ 
 Let $v$ be defined as in \eqref{eku1}--\eqref{eku2} and let $u\in C^6(\lin{\omega})$ be such that \mbox{$\partial_s u+\partial_1v\cdot \lin{\tau}=0$} in $\omega$. 
 
 We consider the sequence
\begin{eqnarray}
\nonumber y^h&=&\capp{y}^h+\frac{h^2\epsilon_h}{\delta_h}F\Big(hw''\int_0^s\Big({\int_0^{\xi}{N(\eta)d\eta}\Big)\tau(\xi) d\xi}+\delta_h t w''\Big(\int_0^s{N(\xi) d\xi}\Big) n\Big)\\
\nonumber &+&\frac{h^3\epsilon_h}{\delta_h}\Big(u-\frac{\delta_h}{h}t\partial_1 v\cdot \n\Big)e_1,
\end{eqnarray}
which is obtained adding to the sequence $(\capp{y}^h)$ introduced in Step 1 two corrective terms. The first corrective term is due to the different structure of $g$, while the second one is needed to cancel the contribution to the energy of the quantity $$\frac{h^2\epsilon_h}{\delta_h}\Bigg(\begin{array}{c}0\\\partial_1 v_2\\\partial_1 v_3\end{array}\Bigg)\otimes e_1,$$
which is now of order $\epsilon_h$. We observe that the term $\big(\frac{h^2\epsilon_h}{\delta_h}w''\int_{0}^s{N(\xi)d\xi}\big)e_1\otimes e_1$ is now included in the expression of $g$.

The proof of \eqref{cn0}--\eqref{cn5} is analogous to the one in Step 1. To prove convergence of the energies, we argue as in Step 1 and we deduce 
$$\lim_{h\ten 0}{\frac{\cal{J}^h(y^h)}{\epsilon_h^2}}=\frac{1}{24}\myintss{Q_2(s,w',b)}+\frac{1}{2}\myintss{\mathbb{E}g^2}.$$
 A standard approximation argument leads then to the conclusion.\\
 \begin{comment}% Convolvo le w con mollificatori prima variabile, convolvo le f idem, definisco le u come d1fk e convoluzione altri pezzi.
 \end{comment}
{\bf Step 3: $\lambda=0$ and $\mu=+\infty$}.\\
Let $(w,g,b)\in\cal{A}_{0,\infty}$. Then $w$ is affine. Moreover, we can assume that $b\in C^{\infty}(\lin{\st})$, and there exist $\alpha_i\in C^{\infty}[0,L]$, $i=1,\cdots,4$, such that
$$g=\alpha_1''\int_0^s{Nd\xi}+\alpha_2''\gamma_2+\alpha_3''\gamma_3+\alpha_4''.$$
 
 Let $v$ and $u$ be defined as in the previous step. We consider the sequence: 
 \begin{eqnarray}
\nonumber {y}^h&=&\capp{y}^h+\epsilon_h \alpha_1' \int_0^s{N(\xi) d\xi}e_1-\frac{\epsilon_h\delta_h t}{h}\alpha_1'Te_1+\frac{\epsilon_h}{h}\alpha_1\Bigg(\begin{array}{c}0\\-\gamma_3\\\gamma_2\end{array}\Bigg)-\frac{\epsilon_h\delta_h t}{h^2}\alpha_1 \tau\\
\nonumber &+&\epsilon_h F\Big(h\Big(\alpha_1''\int_0^s{\Big(\int_0^{\xi}{N(\eta)d\eta}\Big)\tau(\xi) d\xi}\Big)+\delta_h t \alpha_1''\Big(\int_0^s{N(\xi)d\xi}\Big)n\Big)\\
\nonumber &+&\frac{h^3\epsilon_h}{\delta_h}\Big(u-\frac{\delta_h}{h} t\partial_1v\cdot \lin{n}\Big)e_1,
\end{eqnarray}
where $(\capp{y}^h)$ is the sequence introduced in Step 1.

We observe that the previous sequence is obtained by a slight modification of the recovery sequence introduced in Step 2, due to the fact that, since $\lambda=0$, the contribution of $w''$ to the energy is zero and the role of $w''$ in the structure of $g$ is now played by $\alpha_1''$.

Arguing as in Step 1, it is straightforward to prove \eqref{cn0}--\eqref{cn5}. The same computations of Step 1 yield also convergence of the energies and the conclusion follows by approximation.\\
{\bf Step 4: $\lambda=0$ and $0<\mu<+\infty$.}\\
Let $(w,g,b)\in\cal{A}_{0,\mu}$. Then $w$ is affine. Moreover, by Lemma \ref{aplem} we can reduce to the case where $g\in C^4(\lin{\st})$, \mbox{$b\in C^{3}(\lin{\st})$}, and there exists $\phi\in C^5(\lin{\st};\mathbb{R}^3)$ such that
\begin{equation}
\nonumber
\partial_1 \phi_1=\mu g, \quad \partial_s \phi \cdot \tau=0, \quad \partial_s \phi_1+\partial_1 \phi\cdot \tau=0, \quad\text{ and }\quad
\partial_s(\partial_s \phi \cdot n)=b . 
\end{equation} 

We define
\begin{eqnarray}
\nonumber y^h&:=&\psi^h+\frac{h^3\epsilon_h}{\delta_h} \phi_1 e_1+
\epsilon_hF\Big(h\int_0^s{g\tau d\xi}+\delta_h t g n\Big)\\
\nonumber &+&\epsilon_h\Bigg(-tw\tau+\frac{h}{\delta_h}w\Bigg(\begin{array}{c}0\\-\gamma_3\\\gamma_2\end{array}\Bigg)\Bigg)-\epsilon_h\Big(thw'T-\frac{h^2}{\delta_h}w'\int_0^s{N d\xi}\Big)e_1\\
\nonumber &-& th\epsilon_h(\partial_s \phi\cdot n)\tau+\frac{h^2\epsilon_h}{\delta_h}\Bigg(\begin{array}{c}0\\\phi_2\\\phi_3\end{array}\Bigg)-h^2\epsilon_h t\partial_1 \phi\cdot n e_1\\
\nonumber &-&\sigma^h-\frac{\epsilon_h^2}{2\delta_h^2}w^2(h\gamma+\delta_h t n),
\end{eqnarray}
where the terms in the first line are related to conditions \eqref{cn0} and \eqref{cn1} and to the optimal constant $\mathbb{E}$, whereas the second and the third lines are related to conditions \eqref{cn3} and \eqref{cn5} and to the quadratic form $Q_2$.

Arguing as in the previous steps it is straightforward to prove that conditions \eqref{cn0}--\eqref{cn5} are satisfied and that\begin{eqnarray}
\nonumber \lim_{h\ten 0}{\frac{\cal{J}^h(y^h)}{\epsilon_h^2}}=\frac{1}{24}\myintss{Q_2(s,w',b)}+\frac{1}{2}\myintss{\mathbb{E}g^2}.\end{eqnarray}
{\bf Step 5: $\lambda=\mu=0$.}\\
Assume that there exists a finite number of points $0=p_0<p_1<\cdots<p_m=1$ such that for every $i=0,\cdots,m-1$ we have that $k(s)>0$ for every $s\in (p_i,p_{i+1})$, or $k(s)<0$ for every $s\in (p_i,p_{i+1})$ or $k(s)=0$ for every $s\in (p_i,p_{i+1})$.

Let $(w,g,b)\in \cal{A}_{0,0}$. Then $w$ is affine. Moreover, by Remark \ref{recuse}, we can reduce to the case where $g\in C^4(\lin{\omega})$ and there exist two maps $u\in C^{6}(\lin{\omega})$ and $z\in C^5(\lin{\omega})$ such that $\partial_1^2 u=g$ and $\partial_s^2 u=kz$. By Lemma \ref{aplem2} we can also assume that $b\in C^3(\lin{\omega})$ and there exists $\phi\in C^5(\lin{\omega};\mathbb{R}^3)$ such that
\begin{equation}
\nonumber \partial_1\phi_1=0,\quad \partial_s \phi\cdot \tau=0,\quad \partial_s \phi_1+\partial_1\phi\cdot \tau=0\quad\text{ and }\quad \partial_s(\partial_s \phi\cdot n)=b.
\end{equation}

We define:
\begin{eqnarray}
\nonumber y^h&:=&\psi^h+\epsilon_h \Big(\partial_1u+\frac{\delta_h}{h}t\partial_1 z\Big)e_1-\frac{\epsilon_h}{h}(\partial_s u\tau+z n)+\frac{\epsilon_h\delta_h}{h^2}t(\partial_s u k+\partial_s z)\tau\\
\nonumber &+&
\epsilon_hF\Big(h\int_0^s{g\tau d\xi}+\delta_h t g n\Big)\\
\nonumber &+&\epsilon_h\Bigg(-tw\tau+\frac{h}{\delta_h}w\Bigg(\begin{array}{c}0\\-\gamma_3\\\gamma_2\end{array}\Bigg)\Bigg)-\epsilon_h\Big(thw'T-\frac{h^2}{\delta_h}w'\int_0^s{N d\xi}\Big)e_1\\
\nonumber &-& th\epsilon_h(\partial_s \phi\cdot n)\tau+\frac{h^2\epsilon_h}{\delta_h}\Bigg(\begin{array}{c}0\\\phi_2\\\phi_3\end{array}\Bigg)-h^2\epsilon_h t\partial_1 \phi\cdot n e_1+\frac{h^3\epsilon_h}{\delta_h}\phi_1 e_1\\
\nonumber &-&\sigma^h-\frac{\epsilon_h^2}{2\delta_h^2}w^2(h\gamma+\delta_h t n),
\end{eqnarray}
where the first line contains now some corrective terms to compensate the contribution given by $\partial_s u$, and the terms in the other lines play the same role as in the previous steps.

Arguing as in Step 1, it is immediate to prove \eqref{cn0}--\eqref{cn5}. The same computations of Step 1 yield also \eqref{cen}. Hence, the proof of the Theorem is completed.
\end{proof}
\bigskip
\bigskip

\noindent
\textbf{Acknowledgements.}
I warmly thank Maria Giovanna Mora for having proposed to me the study of this problem and for many helpful and interesting discussions and suggestions.\\
This work was partially supported by MIUR under PRIN 2008 and by INDAM under GNAMPA projects 2010 and 2011. 
\bigskip

\end{document}